\documentclass[a4paper,11pt]{amsart}

       \usepackage{palatino, verbatim}
        \usepackage[T1]{fontenc}
        \usepackage[utf8]{inputenc}
        \usepackage{amsthm, amsfonts}
        \usepackage{amsfonts}
        \usepackage{graphicx}
        \usepackage{amssymb}
        \usepackage{amsmath}
        \usepackage{latexsym}
        \usepackage[all]{xy}
        \usepackage{tikz}
        \usepackage{tikz-cd}
        \usepackage{subcaption}
        \usepackage{multirow}
        \usepackage{caption}
\usepackage{subcaption}        % modern subfigure support
  \newtheorem{theorem}{Theorem}[section]
          \newtheorem{corollary}[theorem]{Corollary}
          \newtheorem{lemma}[theorem]{Lemma}
          \newtheorem{proposition}[theorem]{Proposition}
          
           \newtheorem{definition}{Definition}

        \theoremstyle{definition}
         
        \newtheorem{remark}{Remark}
          \newtheorem{example}{Example}[theorem]

\def\P{\mathbb{P}}

\def\Cliff{\mathrm{Cliff}}
\def\deg{\mathrm{deg}}
\def\Pic{\mathrm{Pic}}
\def\dim{\mathrm{dim}}
\def\O{\mathcal{O}}

\title%[The Gieseker-Petri divisor]
{The Prym-canonical Clifford index}
\author{Margherita Lelli--Chiesa, Martina Miseri}
\address{Università Roma Tre, Dipartimento di Matematica e Fisica, 00146 Roma}
\email{margherita.lellichiesa@uniroma3.it\\ martina.miseri@uniroma3.it}

\begin{document}
\begin{abstract}
We introduce two new invariants of Prym curves, the Prym-canonical Clifford index and the Prym-canonical Clifford dimension. The former is a nonnegative integer (according to Prym-Clifford's theorem), while the latter is a pair of nonnegative ordered integers. We classify Prym curves with Prym-canonical Clifford index equal to $0,1,2$. By specialization to hyperelliptic  curves, we compute the Prym-canonical Clifford index of a general Prym curve and show that its Prym-canonical Clifford dimension is $(0,0)$.
\end{abstract}
\maketitle
\section{Introduction}
The Clifford index is a fundamental and widely studied invariant of algebraic curves, which captures key aspects of the geometry of their canonical model. On the one hand, it is determined by the Brill--Noether theory of the curve and measures how far the curve is from being hyperelliptic, thus inducing, by lower semicontinuity, an interesting stratification of the moduli space $\mathcal M_g$. On the other hand, it is expected to govern the syzygies of canonical curves, as predicted by Green's conjecture, first formulated in \cite{green} and proved by Voisin for a general curve \cite{voisineven}, \cite{voisinodd} (cf. \cite{papadima} and \cite{kemeny} for more recent proofs).

In studying Prym-canonical models of algebraic curves, that is, embeddings provided by the tensor product of the canonical line bundle $\omega_C$ with a $2$-torsion line bundle $\eta$, it is natural to look for an analogous invariant. This should be related both to the syzygies of the Prym-canonical curve $\varphi_{\omega_C\otimes \eta}(C)\subset \P(H^0(\omega_C\otimes \eta)^\vee)=\P^{g-2}$ and to the secant varieties
\[
V^{e-f}_e(\omega_C\otimes \eta):= \{ D \in C_e \ | \ h^0(C, \omega_C\otimes\eta(-D))\ge g-1-e+f \},\,\,\,\,\,1\le f<e,\]
which are natural generalizations of the Brill--Noether varieties $W^r_d(C)$ and parametrize degree-$e$ effective divisors imposing $f$ conditions less than expected to the Prym-canonical linear system $|\omega_C\otimes \eta|$ (cf. \cite[Thm 1.1]{farkaslc} for recent results concerning their emptiness/nonemptiness and dimension for a general Prym curve).

If $(C, \eta)$ is a Prym curve and $L \in \Pic(C)$ is a line bundle satisfying $h^0(L)\ge 1$, $h^0(L \otimes \eta)\ge 1$, we set \[\begin{split}
\Cliff_{\eta}(L)&:= h^0(\omega_C \otimes \eta)-h^0(L)-h^0(\omega_C \otimes L^{\vee})+1\\
&=\deg(L)-h^0(L)-h^0(L\otimes \eta)+1.\end{split}
\] We define the {\em Prym-canonical Clifford index of $(C, \eta)$} as the minimum of $\Cliff_{\eta}(L)$ among all line bundles $L$ on $C$ such that $h^i(L)\ge 1$, $h^i(L \otimes \eta)\ge 1$ for $i=0,1$. A line bundle $L$ satisfying these conditions is said to {\em contribute} to the Prym-canonical Clifford index of $(C,\eta)$; if in addition $\Cliff_{\eta}(L)=\Cliff_\eta(C)$, we say that $L$ {\em computes} the Prym-canonical Clifford index. The \textit{Prym-canonical Clifford dimension} of $(C,\eta)$ is then defined as the minimal pair $(r(L), r(L\otimes \eta))$, where $L$ ranges among the line bundles computing the Prym-canonical Clifford index; this invariant is the Prym analogue of the Clifford dimension of an algebraic curve first introduced in \cite{elms}. Since the difference map $\phi_d : C^d \times C^d  \rightarrow \Pic^0(C)$ defined by $\phi_d(D,D'):=\O_C(D-D')$ is surjective for $d =\lfloor \frac{g+1}{2} \rfloor$ (cf. \cite[ch. 5]{acgh}), we obtain the following upper bound for the Prym-canonical Clifford index:
$$
\Cliff_\eta(C)\le  \lfloor \frac{g-1}{2} \rfloor. $$

We prove that the Prym-canonical Clifford index is nonnegative and classify Prym curves with vanishing Prym-canonical Clifford index.
\begin{theorem}(Prym--Clifford's theorem)\label{clifford thm}
    Let $(C,\eta)$ be a Prym curve of genus $g\ge 2$. Then \[ \Cliff_{\eta}(C) \ge 0, \] and equality holds if and only if $\omega_C\otimes \eta$ has some base points, that is, when $C$ is hyperelliptic and $\eta=\O_{C}(p-q)$ with $p$ and $q$ Weierstrass points.
\end{theorem}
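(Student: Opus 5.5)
The nonnegativity will come from the additivity of sections under multiplication. Let $L$ contribute, so $h^0(L)\ge1$ and $h^0(L\otimes\eta)\ge1$. The multiplication map
\[
H^0(L)\otimes H^0(\omega_C\otimes L^\vee)\longrightarrow H^0(\omega_C\otimes\eta)
\]
is available because $h^0(\omega_C\otimes L^\vee)=h^1(L)$ and $\omega_C\otimes L^\vee$ has the same sections as $(\omega_C\otimes\eta)\otimes(L\otimes\eta)^\vee$. More symmetrically, I will multiply $H^0(L)$ with $H^0(\omega_C\otimes\eta\otimes L^\vee)$, whose dimension is $h^1(L\otimes\eta)\ge1$.

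By the standard base-point-free-pencil/Hopf lemma, if $V,W$ are nonzero spaces of sections of line bundles $A,B$ on an irreducible curve, then the image of $V\otimes W\to H^0(A\otimes B)$ has dimension at least $\dim V+\dim W-1$. Applied with $A=L$ and $B=\omega_C\otimes\eta\otimes L^\vee$, this gives
\[
h^0(L)+h^0(\omega_C\otimes\eta\otimes L^\vee)-1\le h^0(\omega_C\otimes\eta)=g-1.
\]
By Serre duality $h^0(\omega_C\otimes\eta\otimes L^\vee)=h^1(L\otimes\eta)=h^0(L\otimes\eta)-\deg L-1+g$. Substituting, the inequality becomes $\deg L-h^0(L)-h^0(L\otimes\eta)+1\ge0$, i.e. $\Cliff_\eta(L)\ge0$. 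The definition of $\Cliff_\eta$ in the introduction, written with $h^0(\omega_C\otimes L^\vee)$, should match this after swapping the roles of $L$ and $L\otimes\eta$; I will make sure the indices agree.

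For the equality case, first suppose $C$ is hyperelliptic and $\eta=\O_C(p-q)$ with $p,q$ Weierstrass. Then $\omega_C\otimes\eta=(g-1)g^1_2+p-q=(g-2)g^1_2+2p+p-q$. Hence $\omega_C\otimes\eta(-p)=(g-2)g^1_2+p+(p-q)+\ldots$; more cleanly, $\omega_C\otimes\eta=\O_C((g-2)g^1_2+p+q)$, using $2p\sim2q\sim g^1_2$. This has $h^0\ge g-1$ with fixed part $p+q$, so it has base points. Take $L=\O_C(p)$: then $h^0(L)=1$ and $L\otimes\eta=\O_C(2p-q)=\O_C(q)$, so $h^0(L\otimes\eta)=1$ and $\deg L=1$. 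Thus $\Cliff_\eta(L)=0$, and $h^1(L),h^1(L\otimes\eta)\ge1$ since $g\ge2$.

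Conversely, if $\omega_C\otimes\eta$ has a base point $x$, set $L=\O_C(x)$. Then $h^0(L)=1$, and $h^1(L\otimes\eta)=h^0(\omega_C\otimes\eta(-x))=g-1$, so Riemann--Roch gives $h^0(L\otimes\eta)=1$. So $L$ contributes with $\Cliff_\eta(L)=0$. Moreover $h^0(\eta(x))=1$ means $\eta(x)\sim\O_C(y)$ with $y\neq x$, i.e. $\eta=\O_C(y-x)$ and $2x\sim2y$. Since $x\ne y$, $|2x|$ is a $g^1_2$, so $C$ is hyperelliptic and $x,y$ are Weierstrass. The base-point criterion is just Riemann--Roch: $x$ is a base point iff $h^0(\eta(x))=1$.

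The main obstacle is showing that $\Cliff_\eta(C)=0$ forces base points. Equality in the Hopf bound must then be analyzed. If $h^0(L)\ge2$ and $h^0(\omega_C\otimes\eta\otimes L^\vee)\ge2$, I would use the refined bound: equality in the Hopf lemma with both dimensions at least $2$ forces, via a base-point-free pencil trick argument as in the classical Clifford proof, that the moving parts are multiples of a common $g^1_2$. In that case $C$ is hyperelliptic, and $L$ and $L\otimes\eta$ are of the form $kg^1_2+(\text{base points})$, which forces $\eta$ to be a difference of Weierstrass points. Alternatively, I would reduce to the case where one factor has a single section: if $h^0(L)=1$ with $L=\O_C(D)$, equality says $h^0(\omega_C\otimes\eta(-D))=g-1-\deg D+\ldots$, i.e. $D$ fails to impose independent conditions on $\omega_C\otimes\eta$. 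Removing points of $D$ one at a time should then locate a single base point of $\omega_C\otimes\eta$. I would first try this reduction by induction on $\deg L$, replacing $L$ by $L(-x)$ for $x$ a base point of $L$ or of $\omega_C\otimes\eta\otimes L^\vee$ while preserving $\Cliff_\eta=0$.
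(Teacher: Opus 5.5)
Your proof of $\Cliff_\eta(C)\ge 0$ via Hopf's lemma is the same inequality the paper uses, namely $r(L)+r(\omega_C\otimes\eta\otimes L^\vee)\le r(\omega_C\otimes\eta)$. Your Riemann--Roch treatment of the hyperelliptic example is fine, and so is the equivalence between base points and $\eta=\O_C(p-q)$ with $p,q$ Weierstrass.

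The case $h^0(L)=1$ is also fine, and in fact immediate. If $D\in|L|$, then $\Cliff_\eta(L)=0$ reads $h^0(\omega_C\otimes\eta(-D))=h^0(\omega_C\otimes\eta)$, so all of $D$ lies in the base locus; this is the paper's case $r(L)=0$. The case $h^0(L\otimes\eta)=1$ is symmetric.

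The genuine gap is the remaining case. After normalizing $\deg L\le g-1$, this is $h^0(L)\ge 2$ and $h^0(L\otimes\eta)\ge 2$, the only case with real content, and neither of your strategies settles it.
\begin{itemize}
\item Your first strategy asserts that equality in Hopf's lemma forces both systems to be composed with a common $g^1_2$. This is not a consequence of the base-point-free pencil trick. It is a rigidity statement for the equality case of Hopf's lemma that you would have to prove. Even granted, it only yields a common pencil of unspecified degree. For $g=4$, for instance, a degree count does not exclude $L\simeq\omega_C\otimes\eta\otimes L^\vee\simeq g^1_3$, which would need a separate argument.
\item Your second strategy removes base points of $L$ or of its residual. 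This preserves $\Cliff_\eta=0$, but it stops exactly when $L$, $L\otimes\eta$ and their residuals are base point free. That is precisely the configuration that has to be ruled out.
\end{itemize}

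The missing idea is to apply classical Clifford theory to $L$ and $L\otimes\eta$ separately. For $\deg L\le g-1$ and $h^0(L),h^0(L\otimes\eta)\ge 2$, both bundles contribute to $\Cliff(C)$. Riemann--Roch gives $2\Cliff_\eta(L)=\Cliff(L)+\Cliff(L\otimes\eta)-2$ (Lemma \ref{lemma2}), so $\Cliff_\eta(L)=0$ forces $\Cliff(L)+\Cliff(L\otimes\eta)=2$. Two subcases follow.
\begin{itemize}
\item If one of the two has Clifford index $0$, then $C$ is hyperelliptic. That bundle is $r\,g^1_2$, and the other is $(r-1)g^1_2+x+y$ with $x+y$ fixed. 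Then $\eta^{\otimes 2}\simeq\O_C$ forces $x,y$ to be distinct Weierstrass points with $\eta\simeq\O_C(x-y)$.
\item If both have Clifford index $1$, there are three possibilities. Either $C$ is hyperelliptic with $L=r\,g^1_2+p$ and $L\otimes\eta=r\,g^1_2+q$, giving again $\eta\simeq\O_C(p-q)$ with $p,q$ Weierstrass. Or $L$ and $L\otimes\eta$ are two $g^1_3$'s. Or they are two very ample $g^2_5$'s on a smooth plane quintic.
\end{itemize}

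The paper streamlines this by taking $L$ computing the Prym-canonical Clifford dimension. Then Lemma \ref{lemma3} makes $L$ and $L\otimes\eta$ base point free, which kills the hyperelliptic subcases. Proposition \ref{lemma4} excludes the two $g^1_3$'s. Uniqueness of the $g^2_5$ on a plane quintic excludes the last case.
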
 The characterization of Prym curves $(C,\eta)$ such that $\Cliff_{\eta}(C)= 0$ relies on the following identity: \begin{equation*} 
    2 \Cliff_{\eta}(L)= \Cliff(L)+ \Cliff(L \otimes \eta)-2,
\end{equation*}
which relates the Clifford index and the Prym-canonical Clifford index of any line bundle $L$ contributing both to the Clifford index of $C$ and to the Prym-canonical Clifford index of $(C,\eta)$. This identity also plays a key role in the classification of Prym curves with Clifford index equal to $1$, $2$, as stated in the following two theorems. 
\begin{theorem}\label{thm1}
    Let $(C,\eta)$ be a Prym curve of genus $g\ge 3$. Then, the equality $\Cliff_{\eta}(C)=1$ holds if and only if $\omega_C\otimes \eta$ is base point free but not very ample, that is, when $C$ has a $g^1_4$ and $\eta=\O_C(p+q-x-y)$, with $2(p+q)\sim 2(x+y)$. 
\end{theorem}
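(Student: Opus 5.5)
We prove two equivalences: first $\Cliff_\eta(C)=1$ if and only if $\omega_C\otimes\eta$ is base point free but not very ample, and then that the latter holds exactly in the stated geometric situation. Throughout we use the formula $\Cliff_\eta(L)=\deg L-h^0(L)-h^0(L\otimes\eta)+1$ and Theorem \ref{clifford thm}. By that theorem, $\Cliff_\eta(C)\ge 1$ precisely when $\omega_C\otimes\eta$ is base point free.

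\emph{Step 1: failure of very ampleness gives index at most one.} Assume $\omega_C\otimes\eta$ is base point free but not very ample. Then there are points $x,y$ (possibly equal) with $h^0(\omega_C\otimes\eta(-x-y))=g-2$. Take $L=\O_C(x+y)$, so $\deg L=2$ and $h^0(L)\ge1$. Riemann--Roch gives
\[h^0(L\otimes\eta)=h^0(\omega_C\otimes\eta(-x-y))-(g-1)+2=1.\]
We also need $h^1(L)\ge1$ and $h^1(L\otimes\eta)\ge1$, which follow from $g\ge3$ since $\deg L=2$. If $h^0(L)=2$, then $C$ is hyperelliptic and $\Cliff_\eta(L)=2-2-1+1=0$, which is impossible by base point freeness. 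Hence $h^0(L)=1$ and $\Cliff_\eta(L)=1$, so $\Cliff_\eta(C)=1$.

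\emph{Step 2: index one forces non-very-ampleness.} Let $L$ compute $\Cliff_\eta(C)=1$, and write $L=\O_C(D)$ with $D$ effective and $L\otimes\eta=\O_C(D')$ with $D'$ effective. The key identity is $h^0(L\otimes\eta)=h^0(\omega_C\otimes\eta\otimes L^\vee)$ by Serre duality. So $\Cliff_\eta(L)=1$ says that the divisor $D$ of degree $d$ imposes only $d-h^0(L)$ conditions on $|\omega_C\otimes\eta|$, and $h^0(L)\ge1$. We want to reduce to a degree-2 subdivisor failing to impose independent conditions.

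The plan is to pass from $L$ to $L(-p)$ for a suitable point $p$. Choosing $p$ in $D$ but not a base point of $|L\otimes\eta|$ (possible unless ...), the quantity $\deg-h^0(L)-h^0(L\otimes\eta)$ does not increase. Iterating this lowers $h^0$ until $h^0(L)=h^0(L\otimes\eta)=1$ and $\deg L=2$; the nonnegativity from Theorem \ref{clifford thm} keeps the index at least $1$ at each stage. Equivalently, we would show that if $\omega_C\otimes\eta$ is very ample, every contributing $L$ has $\Cliff_\eta(L)\ge2$.

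One route uses the identity $2\Cliff_\eta(L)=\Cliff(L)+\Cliff(L\otimes\eta)-2$. This gives $\Cliff(L)+\Cliff(L\otimes\eta)=4$, so the Clifford indices are small and $C$ is hyperelliptic, trigonal, a plane quintic, or tetragonal, and we can treat these cases explicitly. The cases where $L$ or $L\otimes\eta$ does not contribute to $\Cliff(C)$, namely $h^0\le1$ or $h^1\le1$, must be handled directly with Riemann--Roch. I expect this case analysis to be the main obstacle.

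\emph{Step 3: the geometric description.} Take $x,y$ with $h^0(\omega_C\otimes\eta(-x-y))=g-2$. By Riemann--Roch this is equivalent to $h^0(\eta(x+y))=1$, i.e.\ $\eta(x+y)=\O_C(p+q)$ for some points $p,q$. Then $\eta=\O_C(p+q-x-y)$, and $\eta^2$ trivial gives $2(p+q)\sim2(x+y)$. Since $\eta\ne\O_C$, we have $p+q\ne x+y$, so the pencil spanned by $2(p+q)$ and $2(x+y)$ is a $g^1_4$; it could also be a $g^1_2$ doubled in the hyperelliptic case. Conversely, such data produce $x,y$ failing to impose independent conditions. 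Base point freeness then excludes the case of Theorem \ref{clifford thm} where $p,q,x,y$ are Weierstrass points with $\eta=\O_C(p-q)$.
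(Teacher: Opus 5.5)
Your Steps 1 and 3 are correct and match the paper: Step 1 is Example \ref{ex2} combined with Theorem \ref{clifford thm}, and Step 3 re-derives Lemma \ref{first lemma}(ii). The problem is Step 2, which is the whole substance of the ``only if'' direction and is not proved.

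Your point-removal route fails as stated. If $p\in D$ is not a base point of $|L\otimes\eta|$, then $h^0(L\otimes\eta(-p))=h^0(L\otimes\eta)-1$. Unless $p$ is also a base point of $|L|$, you also get $h^0(L(-p))=h^0(L)-1$, so $\deg-h^0(L)-h^0(L\otimes\eta)$ goes up by one. Removing a point avoids raising the index only when $p$ is a base point of $|L|$ or of $|L\otimes\eta|$.

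That observation does give the easy reductions (Proposition \ref{lemma1} and Lemma \ref{lemma3}). Take $L$ computing the Prym-canonical Clifford dimension. Then either $h^0(L)=h^0(L\otimes\eta)=1$, whence $\deg L=2$ and $\varphi_{\omega_C\otimes\eta}$ identifies the two points of $D$; or $2\le h^0(L)\le h^0(L\otimes\eta)$ with $L$ and $L\otimes\eta$ both base point free. In the second situation no point can be removed without raising the index. So your iteration stops exactly where the theorem has something to prove, and claiming it ends at $\deg L=2$ is equivalent to the statement itself.

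Your second route is the one the paper takes, but you only announce it. The relation $\Cliff(L)+\Cliff(L\otimes\eta)=4$ for two distinct base point free series is not contradictory in itself. Each configuration has to be excluded with specific geometric input:
\begin{itemize}
\item hyperelliptic curves, where special series have base points;
\item bielliptic curves, handled by Theorem \ref{elliptic}, since they escape the Coppens--Martens bounds and carry many series $g^{f-1}_{2f}$ of Clifford index $2$;
\item two pencils, excluded by Proposition \ref{lemma4} via the embedding into $\P^1\times\P^1$;
\item $L\otimes\eta$ a $g^2_5$ on a plane quintic of genus $6$, handled by studying conics through a divisor of $|\omega_C\otimes L^\vee|$;
\item two $g^2_6$ on a trigonal curve, handled by Maroni theory (Corollary \ref{trig});
\item two $g^3_8$ in genus $9$, handled by Lange--Martens;
\item two simple, non-very-ample $g^2_6$ on a curve of Clifford index $2$, handled by adjoint cubics through the conductor of the singular plane sextic model, which forces $L\simeq\O_C(1)\simeq L\otimes\eta$.
\end{itemize}
Your list of curve types also omits smooth plane sextics and does not single out bielliptic curves. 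Without this case analysis, the implication from $\Cliff_\eta(C)=1$ to non-very-ampleness of $\omega_C\otimes\eta$ remains unproved.
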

\begin{theorem}\label{thm2}
    Let  $(C,\eta)$ be a Prym curve of genus $g\ge 4$. Then, $\Cliff_{\eta}(C)=2$ if and only if the Prym-canonical morphism is an embedding and $\varphi_{\eta}(C)$ has a trisecant line.  
\end{theorem}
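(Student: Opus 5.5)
The plan is to use Theorem~\ref{clifford thm} and Theorem~\ref{thm1} to translate ``$\Cliff_\eta(C)\ge 2$'' into ``$\omega_C\otimes\eta$ is very ample'', i.e.\ the Prym-canonical morphism is an embedding. Then, for a Prym-canonically embedded curve, I will show that $\Cliff_\eta(C)\le 2$ holds if and only if some effective divisor of degree $3$ imposes only $2$ conditions on $|\omega_C\otimes\eta|$. Throughout I read $\varphi_\eta$ as the Prym-canonical map $\varphi_{\omega_C\otimes\eta}$, whose image lies in $\P^{g-2}$. Since $g\ge 4$, a degree-$3$ divisor $D$ spans a line exactly when $h^0(\omega_C\otimes\eta(-D))=g-3$. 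So for an embedded curve, a trisecant line is the same as such a divisor $D$.

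\emph{Trisecant $\Rightarrow$ $\Cliff_\eta(C)=2$.} Let $D$ be a degree-$3$ divisor on a trisecant line, so $h^0(\omega_C\otimes\eta(-D))=g-3$. Riemann--Roch gives
\[
h^0(\eta(D))=3+1-g+h^0(\omega_C\otimes\eta(-D))=1.
\]
Hence $\eta(D)\cong\O_C(E)$ with $E\ge 0$ of degree $3$. Set $L=\O_C(D)$. Then $h^0(L)\ge1$ and $h^0(L\otimes\eta)\ge 1$. Moreover $h^1(L)$ and $h^1(L\otimes\eta)$ are at least $g-3\ge 1$, because the degree is $3$. So $L$ contributes, and $\Cliff_\eta(L)=4-h^0(D)-h^0(E)\le 2$. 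Since the curve is embedded, $\Cliff_\eta(C)\ge 2$ by the previous two theorems, and equality follows.

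\emph{$\Cliff_\eta(C)=2$ $\Rightarrow$ trisecant.} First, $\Cliff_\eta(C)=2$ implies the Prym-canonical map is an embedding, by Theorems~\ref{clifford thm} and~\ref{thm1}. Pick $L$ computing $\Cliff_\eta(C)$ of minimal degree, and write $h^0(L)=r+1$, $h^0(L\otimes\eta)=s+1$, so that $\deg L=r+s+3$. The goal is to show $r=s=0$. Given that, $D\in|L|$ has degree $3$, and
\[
h^0(\omega_C\otimes\eta(-D))=h^1(L\otimes\eta)=g-3
\]
by Riemann--Roch, so $D$ spans a trisecant line.

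The reduction step runs as follows. Suppose a point $p$ is a base point of $|L\otimes\eta|$ but not of $|L|$, or vice versa. Then $L(-p)$ has $\Cliff_\eta$ smaller by $1$, contradicting minimality of $\Cliff_\eta$. The new bundle still contributes: sections survive on one side, and $h^1$ only grows. Similarly, a common base point $p$ yields $L(-p)$ with $\Cliff_\eta$ smaller by $1$. Hence both $|L|$ and $|L\otimes\eta|$ are base point free. Now suppose $r\ge1$. If $s=0$, then base point freeness forces $L\otimes\eta\cong\O_C$, which is impossible since the degree is at least $3$; so $s\ge1$. For a general point $p$, both $h^0$'s drop by exactly one, so $L(-p)$ still contributes and computes $\Cliff_\eta(C)$ with smaller degree. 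This contradicts minimality. The case $s\ge1$ is symmetric, so $r=s=0$.

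The main obstacle I expect is precisely this bookkeeping in the reduction. One has to check that each modification keeps $h^0\ge1$ on both sides, and that the argument ruling out $s=0$ when $r\ge1$ is airtight. The remaining steps are direct Riemann--Roch computations.
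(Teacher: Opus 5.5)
There is a genuine gap, and it sits exactly in the reduction step you called bookkeeping. When $r,s\ge 1$ and $p$ is a general point, you correctly note that $h^0(L(-p))=r$ and $h^0(L(-p)\otimes\eta)=s$. But then
\[
\Cliff_\eta(L(-p))=(r+s+2)-r-s+1=3=\Cliff_\eta(L)+1,
\]
so $L(-p)$ does not compute $\Cliff_\eta(C)$, and minimality of the degree gives no contradiction.

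In general, removing a point $p$ changes $\Cliff_\eta$ as follows:
\begin{itemize}
\item it lowers $\Cliff_\eta$ by $1$ only if $p$ is a common base point of $L$ and $L\otimes\eta$;
\item it leaves $\Cliff_\eta$ unchanged if $p$ is a base point of exactly one of them;
\item it raises $\Cliff_\eta$ by $1$ otherwise.
\end{itemize}
So your claim that removing a base point of just one of the two bundles lowers $\Cliff_\eta$ by $1$ is also wrong. That part can be repaired using minimality of the degree. When $r=0$, you instead remove a point of the unique divisor in $|L|$, as in Proposition~\ref{lemma1}.

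All point-removal arguments of this kind only yield the analogue of Lemma~\ref{lemma3} and Proposition~\ref{lemma1}: either $(r,s)=(0,0)$, or both $L$ and $L\otimes\eta$ are base point free with $r,s\ge 1$. The paper needs a separate geometric argument (Proposition~\ref{lemma4}, via $\P^1\times\P^1$) just to exclude $(1,1)$. Nothing formal excludes the remaining types; the paper explicitly leaves open whether Prym curves with $\dim\Cliff_\eta(C)\neq(0,0)$ exist at all.

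The missing idea is that excluding $2\le h^0(L)\le h^0(L\otimes\eta)$ must use that $\Cliff_\eta(C)=2$ is small. By Lemma~\ref{lemma2} one gets $\Cliff(L)+\Cliff(L\otimes\eta)=6$, hence $\Cliff(C)\le 3$. Hyperelliptic and bielliptic curves are excluded via Lemma~\ref{lemma3} and Theorem~\ref{elliptic}. The paper then runs a long case analysis on $(\Cliff(C),\Cliff(L\otimes\eta),\Cliff(L))$. It uses:
\begin{itemize}
\item Coppens--Martens' degree bounds;
\item Maroni theory for trigonal curves;
\item the classification of linear series on tetragonal and $5$-gonal curves;
\item plane models with their conductor, including the totally tangent conic argument for $g=7$.
\end{itemize}
Each case ends either in a contradiction with the structure of linear series on $C$, or in an explicit writing of $\eta$ as a difference of effective divisors. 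That writing produces a bundle with $\Cliff_\eta\le 1$, or one of type $(0,0)$ computing $\Cliff_\eta(C)$.

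Your other steps agree with the paper. These are that the trisecant line gives $\Cliff_\eta(C)=2$ via Corollary~\ref{veryampleness}, and that a computing bundle of type $(0,0)$ gives a trisecant line.
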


The above results show that the Prym-canonical Clifford index of $(C,\eta)$ depends not only on the curve $C$, but also on the choice of the $2$-torsion line bundle $\eta$. Furthermore, they highlight the close connection between this invariant and the geometry of the Prym-canonical map $\varphi_{\omega_C\otimes\eta}$. This relation becomes particularly transparent in the case of hyperelliptic Prym curves. By \cite[Lem. 4.3]{verra}, every non-trivial $2$-torsion line bundle $\eta$ on a hyperelliptic curve $C$ can be written as\begin{equation*}
\eta= \O_C(w_1+\ldots+ w_k- w_{k+1}- \cdots - w_{2k}),
\end{equation*} where $1 \le k \le \lfloor \frac{g+1}{2} \rfloor$ and $w_1, \ldots, w_{2k}$ are distinct Weierstrass points. This expression is unique up to the reordering of the points $w_i$, except in the case where $k$ is maximal, in which two complementary choices of the set $\{w_1, \ldots, w_{2k}\}$ define the same line bundle $\eta$.  It turns out that the Prym-canonical Clifford index of $(C,\eta)$ depends only on the integer $k$:
\begin{theorem}\label{contoiper}  If $C$ is hyperelliptic and $\eta= \O_C(w_1+\ldots+ w_k- w_{k+1}- \ldots - w_{2k})$, with $w_1, \ldots, w_{2k}$ distinct Weierstrass points of $C$ and $1\le k \le \lfloor \frac{g+1}{2} \rfloor$, then: \[\Cliff_{\eta}(C)=k-1\,\,\,\,\,\textrm{       and      }\,\,\,\,\,\, \dim \Cliff_{\eta}(C)=(0,0).\]
\end{theorem}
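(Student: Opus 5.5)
We work directly on the hyperelliptic curve $C$, with $g^1_2$ denoted $h$ and $\pi:C\to\P^1$ the double cover. We use two standard facts. First, every line bundle $M$ with $h^0(M)\ge 1$ and $h^1(M)\ge1$ can be written uniquely as $M=h^{\otimes r}(B)$, where $r=r(M)$ and $B$ is an effective divisor with $h^0(B)=1$ containing no fibre of $\pi$; moreover $\deg B\le g-1-r$, and $h^1(M)\ge1$ forces $\deg M\le 2g-2$. Second, the Weierstrass points satisfy $2w_i\sim h$, so for $k\ge1$
\[
\eta\simeq \O_C(w_1+\cdots+w_{2k})\otimes h^{-k}.
\]

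\textbf{Upper bound.} Take $L=\O_C(w_1+\cdots+w_k)$. Then $L\otimes\eta\simeq \O_C(w_1+\cdots+w_k-w_{k+1}-\cdots-w_{2k})\otimes L$, and adding $2w_{k+j}\sim h$ rewrites this as $\O_C(w_{k+1}+\cdots+w_{2k})$. Any reduced sum of at most $g$ distinct Weierstrass points (more precisely, at most $g+1$ of them with no fibre) has $h^0=1$. Hence $h^0(L)=h^0(L\otimes\eta)=1$. The condition $k\le\lfloor (g+1)/2\rfloor$ gives $k\le g-1$ in the relevant range, so Riemann--Roch also gives $h^1\ge1$ for both bundles; the edge cases with $g$ small or $k$ maximal have to be checked separately. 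Therefore $L$ contributes and $\Cliff_\eta(L)=k-1-1+1=k-1$, with $(r(L),r(L\otimes\eta))=(0,0)$.

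\textbf{Lower bound.} Let $L$ contribute, and write $L=h^{r}(B)$ and $L\otimes\eta=h^{s}(B')$ as above, with $B,B'$ free of fibres. Then
\[
\Cliff_\eta(L)=\deg L-r-s-1=2r+b-r-s-1=r-s+b-1,
\]
where $b=\deg B$. Symmetrically, $\Cliff_\eta(L)=s-r+b'-1$. Averaging the two expressions gives $\Cliff_\eta(L)=(b+b')/2-1$, so it suffices to show $b+b'\ge 2k$.

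From $L\otimes\eta=h^{s}(B')$ we get
\[
B+w_1+\cdots+w_{2k}\sim B'+h^{\,s-r+k}
\]
(here $h^m$ means the divisor class $mh$). Now bring the left-hand side to reduced form. Points of $B$ that are conjugate to one another are excluded by assumption, and any Weierstrass point $w_i$ occurring in $B$ cancels against $w_i$ through $2w_i\sim h$. The result is that the fibre-free part of $B+\sum w_i$ is
\[
(B\setminus W)+(W_B\,\triangle\,\{w_1,\dots,w_{2k}\}),
\]
where $W_B$ denotes the Weierstrass points in $B$. By uniqueness of the decomposition (this needs the degree bound to ensure the fibre-free part has $h^0=1$), this divisor equals $B'$. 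Consequently
\[
b'\ge 2k-|W_B|+(b-|W_B|)\ge 2k-b,
\]
which is exactly $b+b'\ge 2k$.

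The \textbf{main obstacle} is the uniqueness step. The reduced divisor obtained from $B+\sum w_i$ may have degree exceeding $g$, in which case the decomposition is no longer unique and some fibres can be absorbed. Here we use that $h^1(L\otimes\eta)\ge1$ bounds $\deg B'\le g-1-s$, together with the symmetric bound for $B$, and handle the overflow case directly. In that case one can replace the set $\{w_1,\dots,w_{2k}\}$ by its complement, which is exactly the ambiguity at maximal $k$ noted before the statement.

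\textbf{Clifford dimension.} Equality $\Cliff_\eta(L)=k-1$ forces $b+b'=2k$. The upper-bound example realizes the value $k-1$ with $r=s=0$. Since $(0,0)$ is the minimal possible pair, $\dim\Cliff_\eta(C)=(0,0)$.
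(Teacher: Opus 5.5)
There is a genuine gap in the lower bound. Your upper bound, the identity $\Cliff_\eta(L)=\frac{b+b'}{2}-1$, and the deduction of $\dim\Cliff_\eta(C)=(0,0)$ once the value $k-1$ is known are all correct. But $b+b'\ge 2k$ is not proved: the step you call the main obstacle is where the content of the theorem lies, and it is left open.

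Put $G=(B\setminus W)+(W_B\triangle\{w_1,\dots,w_{2k}\})$, $a=|W_B\cap\{w_1,\dots,w_{2k}\}|$ and $c=s-r+k$, so that $G+ah\sim B'+ch$.

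\begin{itemize}
\item If $c\le a$, you do get $G=B'$ (because $h^0(\O_C(B'))=1$), and your inequality follows.
\item If $c>a$, you only get $G\sim B'+(c-a)h$. This merely forces $\deg G=b+2k-2a\ge g+1$, since a fibre-free divisor of degree $\le g$ does not move. Nothing then bounds $b'$, and the bounds $b\le g-1-r$, $b'\le g-1-s$ do not exclude this case.
\end{itemize}

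The identification $G=B'$ really fails for bundles computing the index. Take $g=3$, $k=2$. The fibre over $\eta$ of the difference map $C_2\times C_2\to\Pic^0(C)$ is positive-dimensional. No pair in it involves a fibre of the $g^1_2$, since that would give $\eta\simeq\O_C(p-q)$ with $p,q$ Weierstrass, i.e.\ $k=1$. A general pair therefore has $r=s=0$, $a\le 1<2=c$, and $\deg G\ge 4>2=b'$.

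Your proposed remedy via the complementary set also fails. The complement represents $\eta$ with $k'=g+1-k$. For $k<\frac{g+1}{2}$, your argument applied to it would give $b+b'\ge 2(g+1-k)>2k$, which is false for $A=\O_C(w_1+\cdots+w_k)$. In the example above, where $k$ is maximal, the complementary representation overflows as well.

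Two minor points: $g+1$ distinct Weierstrass points move in a $g^1_{g+1}$, so your parenthetical claim is wrong (though unused). And $k\le g-1$ holds for every $g\ge 2$, so there are no edge cases.

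What you need is a way to produce a divisor of degree $\le g$ to which uniqueness applies. If $b+b'\ge g+1$ there is nothing to prove, since $2k\le g+1$; this covers the example above. In the range $b+b'\le g$, however, your argument gives no control on $\deg G=b+2k-2a$.

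The paper handles this as follows.
\begin{itemize}
\item \emph{Reduction to type $(0,0)$.} Take $L$ computing the Prym-canonical Clifford dimension. If it were not of type $(0,0)$, Proposition~\ref{lemma1} and Lemma~\ref{lemma3} would make $L$ and $L\otimes\eta$ base point free and special, hence both equal to the same multiple of the $g^1_2$, a contradiction. So $h^0(L)=h^0(L\otimes\eta)=1$, and by the first part of Lemma~\ref{lemma3} the divisors $D\in|L|$ and $D'\in|L\otimes\eta|$ are disjoint.
\item \emph{Doubling when $\deg L<\frac{g+1}{2}$.} Work with $L^{\otimes2}\simeq(L\otimes\eta)^{\otimes2}$ rather than with $\eta$. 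Then $2D$ and $2D'$ are disjoint members of $|L^{\otimes2}|$, which has degree $\le g$. So $L^{\otimes 2}$ is special and base point free, hence a multiple of the $g^1_2$. Since $D$ and $D'$ contain no fibre, they are sums of distinct Weierstrass points, and the uniqueness of $k$ in Lemma~\ref{sandro} forces $\deg L=k$.
\item \emph{The remaining case.} If $\deg L\ge\frac{g+1}{2}$, then $\Cliff_\eta(L)=\deg L-1\ge\frac{g-1}{2}\ge k-1$ directly.
\end{itemize}

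An argument of this kind, reduction to type $(0,0)$ followed by doubling, is what has to replace your identification $G=B'$.
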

In particular, by varying the line bundle $\eta$ on a fixed hyperelliptic curve $C$, one obtains all possible values for $\Cliff_\eta(C)$. Since the Prym-canonical Clifford index is lower semicontinous, it follows that the upper bound $\lfloor \frac{g-1}{2} \rfloor$ is attained by a general element in the moduli space $\mathcal R_g$ of Prym curves of genus $g$:
\begin{theorem} \label{Cgen}
    Let $(C, \eta) \in \mathcal{R}_g$ be a general Prym curve of genus $g$. Then: \[
\Cliff_{\eta}(C)= \lfloor \frac{g-1}{2} \rfloor ,\,\,\,\,\textrm{       and      }\,\,\,\,\,\, \dim \Cliff_{\eta}(C)=(0,0).
\]
\end{theorem}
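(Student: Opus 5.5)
The plan is to derive Theorem \ref{Cgen} from Theorem \ref{contoiper} by semicontinuity, together with the upper bound $\Cliff_\eta(C)\le \lfloor \frac{g-1}{2}\rfloor$ from the introduction. Set $m=\lfloor \frac{g-1}{2}\rfloor$. Take a hyperelliptic curve $C_0$ and $\eta_0=\O_{C_0}(w_1+\ldots+w_k-w_{k+1}-\ldots-w_{2k})$ with $k=\lfloor\frac{g+1}{2}\rfloor$, which is the maximal value. Then $k-1=m$, so by Theorem \ref{contoiper} we have $\Cliff_{\eta_0}(C_0)=m$ and $\dim\Cliff_{\eta_0}(C_0)=(0,0)$. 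The upper bound already holds for every Prym curve. It therefore suffices to show that the locus of $(C,\eta)\in\mathcal R_g$ with $\Cliff_\eta(C)\le m-1$ is closed. This locus misses $(C_0,\eta_0)$, and $\mathcal R_g$ is irreducible, so a general point lies outside it.

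For closedness I will use a relative argument. Work over a family $(\mathcal C\to B,\eta_B)$ of Prym curves, for instance over a suitable cover of a neighbourhood of $[(C_0,\eta_0)]$. For each degree $d$ and each pair $(r,s)$, consider the relative locus of $L\in\Pic^d(\mathcal C/B)$ with $h^0(L)\ge r+1$, $h^0(L\otimes\eta)\ge s+1$, $h^1(L)\ge1$ and $h^1(L\otimes\eta)\ge1$. By upper semicontinuity of $h^0$ and $h^1$ this is closed. It is proper over $B$ because relative Picard varieties of smooth curves are proper, so its image in $B$ is closed. The condition $\Cliff_\eta(L)\le m-1$ becomes $d-r-s-1\le m-1$. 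This gives finitely many triples $(d,r,s)$, since contributing bundles have $0\le d\le 2g-2$. Hence the locus $\{\Cliff_\eta\le m-1\}$ is a finite union of closed sets. The same argument with the extra condition $d-r-s-1= m$ and $(r,s)\neq(0,0)$, taken over pairs with $r\ge1$ or $s\ge1$, handles the dimension.

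The main obstacle is the dimension claim, not the index. The dimension is a minimum of pairs, and for it to be $(0,0)$ we need some $L$ with $h^0(L)=h^0(L\otimes\eta)=1$ computing $\Cliff_\eta$. Semicontinuity only shows that the locus where \emph{some} computing bundle has $r\ge1$ or $s\ge1$ is closed. This is not what we need. Instead I will argue existence directly on the general curve. Choose effective $D,D'$ of degree $m+1$ with $\eta=\O_C(D-D')$; such a choice exists by surjectivity of the difference map, after adjusting the degree by one if necessary. Set $L=\O_C(D)$. Then $\deg L=m+1$, and $\Cliff_\eta(L)=m+1-h^0(L)-h^0(L\otimes\eta)+1\le m$. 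Since $\Cliff_\eta(C)=m$ for general $(C,\eta)$, equality is forced, so $h^0(L)=h^0(L\otimes\eta)=1$.

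Two things remain to check. First, $L$ must contribute: $h^1(L)\ge1$ and $h^1(L\otimes\eta)\ge1$ hold because $m+1\le g-1$ for $g\ge3$. Second, the degree must match the target: $\lfloor\frac{g+1}{2}\rfloor=m+1$. Note that $\Cliff_\eta(L)\le m$ rather than $m-1$, so the equality argument above goes through. Small genus $g\le 2$ is treated separately.
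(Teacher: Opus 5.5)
Your proposal is correct and follows the paper's proof. Both arguments specialize to a hyperelliptic Prym curve with $k=\lfloor\frac{g+1}{2}\rfloor$, where $\Cliff_{\eta_0}(C_0)=\lfloor\frac{g-1}{2}\rfloor$ by Theorem \ref{contoiper}. Both then combine semicontinuity with the upper bound from the difference map, and get $\dim\Cliff_\eta(C)=(0,0)$ from a degree-$\lfloor\frac{g+1}{2}\rfloor$ divisor $D$ with $\eta=\O_C(D-D')$. The only difference is technical: you prove closedness via properness of the relative Picard scheme, while the paper extends the divisors $D_b,E_b$ over a one-parameter base. Also, no degree adjustment is needed, since $\phi_d$ is already surjective for $d=\lfloor\frac{g+1}{2}\rfloor$.
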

The hyperelliptic case also sheds light on the relationship between the Prym-canonical Clifford index and the syzygies of a Prym-canonical curve. Indeed, one easily shows that the integer $k$ (or equivalently, the value of $\Cliff_\eta(C)$) uniquely determines the rational normal scroll $S$ containing the Prym-canonical image of $C$ and hence, by Theorem \cite[Thm 1.3]{parkhyper}, its syzygies. The fact that the syzygies of a Prym-canonical curve are influenced by its secant divisors (and thus by its Prym-canonical Clifford index) is already reflected in known results. In particular, Green–Lazarsfeld’s theorem on the normal generation of Prym-canonical curves of Clifford index $2$ \cite[Thm 2.1]{greenlazarsfeld} and Lange–Sernesi’s result on the quadratic generation of the ideal of a Prym-canonical curve of Clifford index $4$ \cite[Lem. 2.1]{langesernesi} both point in this direction. Moreover, the same works (more precisely, \cite[Thm 1]{greenlazarsfeld} and \cite[Cor. 2.3]{langesernesi}) show that the syzygies of a Prym-canonical curve also reflect the classical Clifford index. These observations motivate the introduction of a further invariant. Denoting by $\pi:\tilde C\to C$ the étale double cover associated with a Prym curve $(C,\eta)$ and by $\iota$ the induced involution on $\tilde C$, we define the {\em $\iota$-invariant Clifford index of $\tilde C$} as the minimum of $\Cliff(\tilde L)$, where $\tilde L$ ranges over the $\iota$-invariant line bundles on $\tilde C$ contributing to the Clifford index of $\tilde C$. We prove (cf. Proposition \ref{quasi}) that this invariant takes track of both the Clifford index of $C$ and the Prym-canonical Clifford index of $(C,\eta)$. We believe that the relationship between the $\iota$-invariant Clifford index and syzygies deserves further investigation.

We conclude with a remark on the Prym-canonical Clifford dimension. Our classification results show that every Prym curve with Prym-canonical Clifford index $\le 2$ has minimal Prym-canonical Clifford dimension, namely, $(0,0)$. This stands in sharp contrast with the classical situation, where there exist curves of Clifford index $1$ and non-minimal Clifford dimension (so-called exceptional according to \cite{elms}), namely, smooth plane quintics. A natural open question is therefore the existence of Prym curves with Prym-canonical Clifford dimension strictly greater than $(0,0)$, which one may natural call \textit{Prym-exceptional}; we plan to address this problem in future work.

\subsection{Structure of the paper.} Section \ref{preliminary} recalls preliminary results on the Prym-canonical linear system, as well as bounds due to Coppens-Martens on the degree of line bundles computing the classical Clifford index of an algebraic curve. 

In Section \ref{tre} we introduce the Prym-canonical Clifford index and dimension, and we establish several useful properties. In particular, we show that any line bundle computing the Prym-canonical Clifford dimension of $(C,\eta)$ is base point free (cf. Lemma \ref{lemma3}), and that, whenever $\dim \Cliff_{\eta}(C) \neq (0,0)$, one has $\dim \Cliff_{\eta}(C) \ge (1,2)$ (cf. Propositions \ref{lemma1} and \ref{lemma4}). We also describe the relationship between the Prym-canonical Clifford index and the secant varieties $V^{e-f}_e(\omega_C \otimes \eta)$, and we prove Prym--Clifford's theorem. In the final paragraph of the section, we consider the étale double cover $\pi \colon \tilde{C} \to C$  induced by $\eta$ and the covering involution $\iota$  on $\tilde{C}$. We then introduce the $\iota$-invariant Clifford index of $\tilde C$. Proposition \ref{quasi} computes this invariant in terms of the Prym-canonical Clifford index of $(C,\eta)$ and the gonality of $C$.

Section \ref{quattro} begins with some background on linear series on curves of low gonality. We then prove that, if $C$ is bielliptic, the Prym-canonical Clifford dimension of $(C,\eta)$ is always $(0,0)$. The remainder of the section is devoted to the proof of the classification results for Prym curves with Prym-canonical Clifford index $1$ and $2$. These proofs are quite involved and rely on the description of linear series on  curves of gonality $3$, $4$ and $5$, as well as on the geometry of plane curves with double points.

In Section \ref{cinque} we focus on hyperelliptic curves, proving Theorem \ref{contoiper} and relating it to the syzygies of hyperelliptic Prym-canonical curves. In analogy with the classical Clifford index, we then show that the Prym-canonical Clifford index is lower semicontinuous in families of Prym curves, and we finally prove Theorem \ref{Cgen} for a general Prym curve.

\vskip 5pt

\noindent {\small{{\bf Acknowledgments:} We are grateful to Andrea Bruno, Andreas Leopold Knutsen and Alessandro Verra for numerous valuable conversations related to the topics of this paper. The authors were partially supported by PRIN2022 “Moduli spaces and Birational Geometry” and by the INdAM research group GNSAGA.}}

\section{Preliminaries}\label{preliminary}
Let $(C,\eta)$ be a Prym curve of genus $g \ge 2$, that is, $C$ is a smooth genus $g$ curve over $\mathbb{C}$ and $\eta \in \mathrm{Pic}^0(C)$  is a nontrivial $2$-torsion line bundle. We denote by \[\varphi_{\omega_C\otimes\eta} \colon C \rightarrow \mathbb{P}^{g-2} \] the morphism defined by the \textit{Prym-canonical system} $|\omega_C \otimes \eta|$. 
We recall some preliminary results about the Prym-canonical system.
\begin{lemma}(\cite[Lem. 2.1]{cdgk}) \label{first lemma}
Let $(C,\eta)$ be a Prym curve of genus $g\geq 3$. Then: \begin{enumerate}
\item[(i)] $p$ is a base point of $|\omega_C \otimes \eta|$ if and only if $|p+\eta|\ne \emptyset$. This happens if and only if $C$ is hyperelliptic and $\eta \sim \O_C(p-q)$, with $p$ and $q$ ramification points of the $g^1_2$. In particular, $p$ and $q$ are the only base points; \label{first condition}
\item[(ii)] if $|\omega_C \otimes \eta|$ is base point free, then it does not separate $p$ and $q$ (possibly infinitely near) if and only if $|p+q+ \eta| \ne \emptyset$. This happens if and only if $C$ has a $g^1_4$ and $\eta \sim \O_C(p+q-x-y)$, where $2(x+y)$ and $2(p+q)$ are members of the $g^1_4$. In particular, also $x$ and $y$ are not separated by $|\omega_C \otimes \eta|$. \label{second condition}
\end{enumerate}

\end{lemma}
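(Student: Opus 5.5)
The plan is to use Riemann--Roch, Serre duality and Clifford's theorem to translate base points and non-separated pairs of $|\omega_C\otimes\eta|$ into effective divisors, and then to exploit that $\eta$ is nontrivial $2$-torsion. Throughout, $h^0(\omega_C\otimes\eta)=g-1$, since $h^0(\eta)=0$.

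\emph{Part (i).} A point $p$ is a base point exactly when $h^0(\omega_C\otimes\eta(-p))=g-1$. By Riemann--Roch this is equivalent to $h^0(\eta(p))=1$, i.e.\ $|p+\eta|\neq\emptyset$. In that case $\eta\sim q-p$ for some point $q$, with $q\neq p$ because $\eta$ is nontrivial. Then $2q\sim 2p$ with $q\ne p$, so $|2p|$ is a $g^1_2$ and $C$ is hyperelliptic with $p,q$ ramification points; here $\eta\sim q-p=p-q$ since $2\eta=0$. Conversely, if $C$ is hyperelliptic and $p,q$ are Weierstrass points, then $\eta=\O_C(p-q)$ gives $|p+\eta|=\{q\}$. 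For uniqueness of the base points, any base point $r$ satisfies $\eta\sim s-r$ with $r,s$ Weierstrass. Then $s-r\sim q-p$ gives $s+p\sim q+r$. If these divisors are distinct, they span a $g^1_2$, which must be the unique one, so $p+s$ would be a fibre, which is impossible since $p\neq s$ are both ramification points. Hence $\{r,s\}=\{p,q\}$ as divisors. The hypothesis $g\ge 3$ gives uniqueness of the $g^1_2$ as well as the hyperellipticity conclusion.

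\emph{Part (ii).} Non-separation of $p,q$ means $h^0(\omega_C\otimes\eta(-p-q))\ge g-2$, which by Riemann--Roch is $h^0(\eta(p+q))\ge 1$. So there is an effective divisor $x+y\sim p+q+\eta$, and $x+y\ne p+q$ because $\eta$ is nontrivial. Then $2(x+y)\sim 2(p+q)$ are distinct divisors, giving a pencil of degree $4$. Freeing its base points yields a $g^1_d$ with $d\le 4$. Since $|\omega_C\otimes\eta|$ is base point free, $C$ is not hyperelliptic with $\eta$ of the form in (i); I still have to exclude the case where $C$ is hyperelliptic with some other $\eta$, or $d=3$. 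This is the main obstacle, and it is also where the phrase ``$C$ has a $g^1_4$ containing $2(x+y)$ and $2(p+q)$'' has to be made precise. The intended meaning is that $2(x+y)$ and $2(p+q)$ span a pencil $\mathfrak g^1_4$, possibly with base points. I would check that in the degenerate cases the statement still holds with that meaning, so that only the linear equivalence $2(p+q)\sim2(x+y)$ matters. The converse direction is immediate: $\eta\sim p+q-x-y$ gives $|p+q+\eta|\ni x+y$, and by the symmetry $\eta=-\eta$ we get $|x+y+\eta|\ni p+q$. This same symmetry shows that $x$ and $y$ are also not separated. The infinitely near case $p=q$ is handled by the identical computation with the divisor $2p$.
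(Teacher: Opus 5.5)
Your Riemann--Roch reductions are the whole content of the lemma: a base point corresponds to $|p+\eta|\neq\emptyset$, non-separation corresponds to $|p+q+\eta|\neq\emptyset$, and then you use that $\eta$ is $2$-torsion. The paper itself only cites the lemma from \cite{cdgk}. However, the proposal stops at a self-declared ``main obstacle'', and the plan you announce for it would fail: the case ``$C$ hyperelliptic with some other $\eta$'' cannot be excluded. Take $C$ hyperelliptic and $\eta=\O_C(w_1+w_2-w_3-w_4)$ with $w_i$ distinct Weierstrass points. By the uniqueness of $k$ discussed after Lemma \ref{sandro}, $\eta$ is not of the form $\O_C(w-w')$, so $|\omega_C\otimes\eta|$ is base point free by (i). Yet $|w_1+w_2+\eta|\ni w_3+w_4$, so $w_1,w_2$ are not separated (Example \ref{dom}). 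The relevant $g^1_4$ there is the incomplete pencil spanned by $2(w_1+w_2)$ and $2(w_3+w_4)$ inside $|2g^1_2|$. So ``$C$ has a $g^1_4$'' must mean a not necessarily complete pencil of degree $4$, not ``$C$ is tetragonal''. This matches the paper's reformulation in Theorem \ref{thm1}, which only records $2(p+q)\sim 2(x+y)$. With this reading nothing has to be excluded.

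Your other worry (base points of the pencil, a possible $g^1_3$) is settled by one line you are missing. Suppose $2(x+y)$ and $2(p+q)$ had a common point, say $x=p$. Then $\eta\sim q-y$ with $y\neq q$, so $q$ would be a base point of $|\omega_C\otimes\eta|$ by (i), contrary to the hypothesis. Hence $\{x,y\}\cap\{p,q\}=\emptyset$, also when $p=q$ or $x=y$. The pencil spanned by $2(x+y)$ and $2(p+q)$ is therefore a base-point-free $g^1_4$, no freeing is needed, and (ii) is complete.

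There is also a slip in the uniqueness part of (i). You dismiss the case $s+p\neq q+r$ by asserting $p\neq s$, but $s=p$ does occur. In that case $s+p$ is the fibre $2p$, hence $2p\sim q+r$ forces $r=q$, and $2p,2q$ are distinct fibres with no contradiction. Read literally, your argument would rule out the genuine base point $q$. Handle $s=p$ separately and you obtain $r\in\{p,q\}$, as claimed.
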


\begin{corollary}(\cite[Cor. 2.2]{cdgk}) \label{coro}
Suppose the Prym-canonical system is base-point free and $C$ has genus $g \ge 4$. If $\varphi_{\omega_C\otimes\eta} \colon C \rightarrow  \P^{g-2}$ is not birational onto its image, then it is of degree $2$ onto a smooth elliptic curve $E \subseteq \P^{g-2}$ (hence, $C$ is bielliptic). Moreover, $\eta$ is the pullback of a nontrivial $2$-torsion line bundle on $E$. 
\end{corollary}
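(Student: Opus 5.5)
The statement to prove is Corollary \ref{coro}. I would analyse the Prym-canonical morphism $\varphi:=\varphi_{\omega_C\otimes\eta}$ through its degree onto the image. Set $L=\omega_C\otimes\eta$, so $\deg L=2g-2$ and $h^0(L)=g-1$. Suppose $\varphi$ is not birational and has degree $m\ge 2$ onto a nondegenerate curve $\Gamma\subset\P^{g-2}$. Then $\deg\Gamma=(2g-2)/m$. Nondegeneracy gives $\deg\Gamma\ge g-2$, hence $m(g-2)\le 2g-2$.

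\textbf{Step 1: only $m=2$ is possible.} For $g\ge 5$ the inequality $m(g-2)\le 2g-2$ forces $m=2$. The cases $g=4$, $m=3$ would need $\deg\Gamma=2$ in $\P^2$; I would handle these separately.

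\textbf{Step 2: identify $\Gamma$ when $m=2$.} Now $\deg\Gamma=g-1$ in $\P^{g-2}$. This is either a rational normal curve plus one, or an elliptic normal curve of degree $g-1$. By Castelnuovo, a nondegenerate curve of degree $g-1$ in $\P^{g-2}$ has geometric genus at most $1$.

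Suppose the normalization $\tilde\Gamma$ is rational. Then $L=\varphi^*\O(1)$ is the pullback of a degree-$(g-1)$ bundle from $\P^1$ via a degree-$2$ map. So $C$ is hyperelliptic and $L\cong (g^1_2)^{\otimes(g-1)}$ up to the base locus, which is empty by base point freeness. But $(g^1_2)^{g-1}\cong\omega_C$, so $\eta$ would be trivial, a contradiction. (Concretely, $h^0((g^1_2)^{g-1})=g\neq g-1$.)

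Hence $\tilde\Gamma=E$ is elliptic, and since a nondegenerate degree-$(g-1)$ curve of arithmetic genus at most $1$ with elliptic normalization is smooth, $\Gamma=E$ is an elliptic normal curve. We get $L=f^*M$ with $f\colon C\to E$ the double cover and $\deg M=g-1$, and $h^0(M)=g-1$ is consistent.

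\textbf{Step 3: $\eta$ is a pullback.} Write $f_*\O_C=\O_E\oplus\delta^{-1}$, where $\deg\delta=g-1$ by Riemann--Hurwitz, since the branch divisor has degree $2g-2$. Then $\omega_C=f^*\delta$, so $\eta=f^*(M\otimes\delta^{-1})$, and $\epsilon:=M\otimes\delta^{-1}\in\Pic^0(E)$ satisfies $f^*\epsilon^{2}=\O_C$.

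Since $f^*\colon\Pic^0(E)\to\Pic^0(C)$ has kernel of order at most $2$ (the kernel is trivial when the cover ramifies, i.e. $g\ge2$), $\epsilon^2$ is trivial. Also $\epsilon\neq\O_E$ because $\eta\neq\O_C$. So $\eta$ is the pullback of a nontrivial $2$-torsion bundle on $E$.

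\textbf{Main obstacle.} I expect the hardest part to be the small-genus cases, namely $g=4$ with $m=3$ and the exclusion of a singular or rational image in Step 2. For $g=4$, $m=3$, the image would be a conic, so $L=f^*\O_{\P^1}(2)=2g^1_3$ with $h^0(L)=3$. I would then argue that $2g^1_3$ has degree $6=2g-2$ and $h^0=3>g-1$ unless $2g^1_3\cong\omega_C$; both possibilities contradict $L\not\cong\omega_C$ together with $h^0(L)=3=g-1$. Equivalently, $h^0(\omega_C-L)=h^0(\eta)=0$ together with Riemann--Roch gives $h^0(L)=3$ and is consistent, so this case needs care. I would rule it out via the Casnati–Ekedahl-type fact that on a genus-$4$ trigonal curve $2g^1_3$ equals $\omega_C$ or $g^1_3+g'^1_3$, the latter also being canonical.
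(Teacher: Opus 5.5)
\textbf{Verdict: there is a genuine gap, in the case $g=4$, $m=3$.} The paper gives no proof of this statement and simply cites \cite{cdgk}, so your argument has to stand on its own. Your treatment of $m=2$ is sound. This covers Castelnuovo, the exclusion of a rational image via $L\cong\omega_C$, smoothness of $\Gamma$, and the descent of $\eta$ using injectivity of $f^*$ on $\Pic^0(E)$ for a ramified double cover. Together with Step 1, this settles $g\ge 5$. The problem is the one case you isolate but do not close.

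\textbf{Why your argument there fails.} In that case $\varphi$ is a degree-$3$ map $h\colon C\to\P^1$ followed by the conic embedding, so $L\cong A^{\otimes 2}$ with $A=h^*\O_{\P^1}(1)$ a base point free $g^1_3$. Set $A':=\omega_C\otimes A^{\vee}$, the residual $g^1_3$; then $\eta\cong A\otimes A'^{\vee}$.
\begin{itemize}
\item If the canonical quadric is a cone, then $A\cong A'$ and $\eta$ is trivial, so this subcase is indeed excluded.
\item If the quadric is smooth, then $A\not\cong A'$. Hence $A^{\otimes 2}$ is neither $\omega_C$ nor $A\otimes A'$, and the dichotomy you attribute to a Casnati--Ekedahl-type fact is false.
\item Nothing numerical is contradicted either. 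Since $g-1=3$, the inequality $h^0=3>g-1$ is an arithmetic slip; Riemann--Roch gives $h^0(A^{\otimes 2})=3+h^0(A'\otimes A^{\vee})=3=g-1$.
\end{itemize}
So $A^{\otimes 2}$ is a base point free line bundle of degree $2g-2$ with $g-1$ sections, mapping $C$ three-to-one onto a conic. Moreover $A^{\otimes2}\otimes\omega_C^{\vee}=A\otimes A'^{\vee}$ is a nontrivial element of $\Pic^0(C)$. Your argument as written does not exclude this configuration.

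\textbf{What is missing.} You need to show that $A\otimes A'^{\vee}$ is never $2$-torsion, i.e.\ that $A^{\otimes 2}\not\cong A'^{\otimes 2}$. Either of the following works.
\begin{itemize}
\item \emph{Fibre argument.} Since $h^0(A^{\otimes 2})=3=h^0(\O_{\P^1}(2))$, one has $|A^{\otimes 2}|=h^*|\O_{\P^1}(2)|$. So the fibres of $\varphi_{A^{\otimes 2}}$ over its image conic are exactly the divisors of $|A|$. The same holds for $A'$, which is also a base point free $g^1_3$ with $h^0(A'^{\otimes 2})=3$. Hence $A^{\otimes 2}\cong A'^{\otimes 2}$ would force $|A|=|A'|$, a contradiction.
\item \emph{K\"unneth argument.} Realize $C$ as a $(3,3)$-curve on $\P^1\times\P^1$ with $A=\O_C(1,0)$ and $A'=\O_C(0,1)$. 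Run the computation in the proof of Proposition \ref{lemma4} with $d=3$: both $H^0(\O_{\P^1\times\P^1}(2,-2))$ and $H^1(\O_{\P^1\times\P^1}(-1,-5))$ vanish. Hence $h^0(\O_C(2,-2))=0$, so $\O_C(2,-2)\not\cong\O_C$.
\end{itemize}
With this step added, your proof is complete.
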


\begin{remark}
    When $g=2$, the Prym-canonical system $|\omega_C \otimes \eta|$ cannot be base-point free, as $\varphi_{\omega_C\otimes\eta}$ sends $C$ to a point. Analogously, if $g=3$ and $\omega_C\otimes \eta$ is base point free, then the morphism $\varphi_{\omega_C\otimes\eta}$ is never birational onto its image since its image is $\P^1$.
\end{remark}

\begin{remark} \label{notbir}
By the previous corollary, if $\varphi_{\omega_C\otimes\eta}$ is not birational, then it factors as
\[ \begin{tikzcd}
C \arrow[r, "f"] \arrow[d, "\varphi_{\omega_C\otimes\eta}"'] & E \arrow[ld] \\
\mathbb{P}^{g-2}                              &             
\end{tikzcd}
\]
where $f$ is a degree $2$ cover on a smooth elliptic curve $E$. In particular, every divisor in $|\omega_C \otimes \eta|$ is the pullback of a hyperplane section of $E \subseteq \P^{g-2}$. \end{remark}

Throughout the paper we will also make use of the following results by Coppens-Martens bounding the degree of line bundles computing the Clifford index of a smooth curve.
\begin{theorem}\cite[Thm C]{cm} \label{thm c}
    Let $C$ be a curve of genus $g$. If there is a $g^r_d$  computing the Clifford index of $C$ and $d \le g-1$, then $d \le 2(\Cliff(C)+2)$ unless $C$ is hyperelliptic or bielliptic.
\end{theorem}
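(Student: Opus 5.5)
The plan is to fix a line bundle $L$ of type $g^r_d$ that computes $c:=\Cliff(C)$ with $d\le g-1$, and to assume $C$ is neither hyperelliptic nor bielliptic. The case $r=1$ is immediate, since then $d=c+2\le 2c+4$. So we may assume $r\ge 2$. We may also assume $L$ is base point free: removing a base point keeps $h^0$ and lowers the degree, which would decrease $c$. The argument then splits according to whether $\varphi_L\colon C\to\P^r$ is birational onto its image.

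\emph{Non-birational case.} Suppose $\varphi_L$ factors as $C\xrightarrow{k:1}\Gamma\subset\P^r$ with $k\ge 2$, where $\Gamma$ is nondegenerate of degree $d/k$ and $L=f^*M$.
\begin{itemize}
\item If $\Gamma$ is rational, then $L=f^*\O_{\P^1}(r)$ and $d=rk$. The pencil $f^*\O(1)$ contributes to the Clifford index with $\Cliff=k-2$, whereas $c=r(k-2)$. For $r\ge 2$ and $k\ge3$ this contradicts minimality, and $k=2$ means $C$ is hyperelliptic.
\item If $\Gamma$ has genus $\gamma\ge1$, the case $k=2$, $\gamma=1$ is the excluded bielliptic case. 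In the remaining cases I would pull back a pencil of minimal degree on $\Gamma$ (of degree at most $\lfloor(\gamma+3)/2\rfloor$), or use $d/k\ge r+\gamma$ from Castelnuovo/Riemann--Roch on $\Gamma$. This should produce a contributing bundle of Clifford index smaller than $c$ unless $d\le 2c+4$.
\end{itemize}

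\emph{Birational case.} Now assume $\varphi_L$ is birational and suppose, for contradiction, that $d\ge 2c+5$, i.e.\ $d\le 4r-5$. Castelnuovo's lemma for a birational nondegenerate curve in $\P^r$ gives $h^0(2L)\ge 3r$. Hence, if $2L$ contributes to the Clifford index,
\[
\Cliff(2L)\le 2d-2(3r-1)=2c-2r+2,
\]
which is $<c$ exactly when $d<4r-2$. That contradicts minimality of $c$. It therefore remains to show that $2L$ contributes, i.e.\ $h^1(2L)\ge 2$; the condition $h^0(2L)\ge 2$ is clear.

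\emph{Main obstacle.} The hard step is ruling out $h^1(2L)\le 1$. In that case Riemann--Roch gives $3r\le h^0(2L)\le 2d-g+2$. Combined with $g\ge d+1$ this yields $d\ge 3r-1$, so $c=d-2r\ge r-1$, which is not yet a contradiction. Here I would bring in the classical bound $c\le\lfloor (g-1)/2\rfloor$ and the fact that $\omega_C\otimes L^{\vee}$ also computes $c$. Replacing $L$ by whichever of $L$ and $\omega_C\otimes L^{\vee}$ has smaller degree, and using $d\le g-1$, I expect the inequalities $3r\le 2d-g+2$ and $d\le 4r-5$ to force $g\le 2d-3r+2$. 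Together with the upper bound on $c$, this should be incompatible with $d\le 4r-5$.

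If that fails, I would fall back on a refined Castelnuovo count. For a birational curve of degree $d\le 4r-5$ in $\P^r$ the Castelnuovo number is small, and the curve lies on a surface of minimal degree. In that situation one can extract a pencil cut out by the rulings, and that pencil would have Clifford index $<c$.
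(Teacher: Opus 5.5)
There is a genuine gap in the birational case, at exactly the step you call the main obstacle: the case $h^1(L^{\otimes 2})\le 1$ is never handled, and neither proposed fix works. The numerical route cannot succeed, because all the constraints you list can hold at once. Combining $c\le\lfloor (g-1)/2\rfloor$, i.e.\ $g\ge 2d-4r+1$, with $g\le 2d-3r+2$ only yields $r\ge -1$. Concretely, $r=4$, $d=11$, $g=12$, $c=3$ satisfies $d\le g-1$, $d\le 4r-5$, $3r\le 2d-g+2$ and $c\le 5$; these values in fact force $L^{\otimes 2}\simeq\omega_C$. Replacing $L$ by $\omega_C\otimes L^{\vee}$ gives nothing new, since $d\le g-1$ already makes $L$ the bundle of smaller degree.

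The fallback is not justified either. Castelnuovo theory puts a curve of degree $d\ge 2r+1$ in $\P^r$ on a surface of minimal degree only when its genus exceeds the Eisenbud--Harris bound $\pi_1(d,r)$. Your hypotheses give no such lower bound on $g$, only $g\ge d+1$. For $r=7$, $d=23$ your inequalities allow $24\le g\le 27$, while $\pi_1(23,7)=27$.

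Even at the top of the range the claimed conclusion fails. Take a smooth $C\in|-3K_S|$ on a quintic del Pezzo surface $S\subset\P^5$. It has $d=15=4r-5$, $g=16$ and $\O_C(2)\simeq\omega_C$, so $h^1(L^{\otimes 2})=1$. Moreover $h^0(\mathcal I_C(2))=5$, whereas a surface of minimal degree in $\P^5$ lies on $6$ independent quadrics, so $C$ lies on no such surface. This curve is excluded only because the conic bundles on $S$ cut out a $g^1_6$. So you would need at least a further analysis of curves on surfaces of degree $r$ (del Pezzo surfaces, cones over elliptic normal curves), and, below that genus range, a genuinely different input.

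What is missing is a way to use the minimality of $c$ beyond the single test bundle $L^{\otimes 2}$. One example is the secant inequality $h^0(L(-E))\le r+1-\tfrac{e}{2}$ for every effective $E$ of degree $e$ with $h^0(L(-E))\ge 2$, which follows from $\Cliff(L(-E))\ge c$. The paper does not prove the statement; it quotes it from Coppens--Martens, whose work is built on secant-space considerations of this kind.

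Your non-birational case, though only sketched, does go through. Write $L=f^*M$ with $M$ on the normalization of $\Gamma$ (genus $\gamma$). If $M$ is special, Clifford gives $\deg M\ge 2r$, hence $d\ge 4r$ and $d\le 2c$ directly. If $\gamma\ge 1$ and $M$ is nonspecial, then $\deg M=r+\gamma$. In that case the pull-back of a minimal pencil on the normalization has Clifford index $<c$ unless $(k,\gamma)=(2,1)$. You should write these computations out explicitly.
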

 
 \begin{corollary}\cite[Cor. 3.2.5]{cm} \label{cor thm c}
    Let $C$ be a curve of genus $g>2\Cliff(C)+4$ (respectively, \mbox{$g>2\Cliff(C)+5$}) if $\Cliff(C)$ is odd (resp., even). If there is a $g^r_d$ with $d \le g-1$ computing the Clifford index of $C$, then $d \le 3(\Cliff(C)+2)/2$ unless $C$ is hyperelliptic or bielliptic.
 \end{corollary}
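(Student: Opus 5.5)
This statement is quoted from \cite[Cor. 3.2.5]{cm}. The plan below reconstructs how one would derive it from Theorem \ref{thm c} by a ``doubling'' argument; I have not checked that every step closes. Write $c=\Cliff(C)$ and let $L$ be a $g^r_d$ computing $c$ with $d\le g-1$. We may assume $L$ is base point free and complete. Then $c=d-2r$, so the claim $d\le 3(c+2)/2$ is equivalent to
\[
r\le \tfrac{c}{4}+\tfrac{3}{2}.
\]
If $r=1$ then $d=c+2\le 3(c+2)/2$ and there is nothing to prove, so assume $r\ge 2$. Since $C$ is neither hyperelliptic nor bielliptic, Theorem \ref{thm c} gives $d\le 2c+4$, hence $r\le (c+4)/2$. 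This is the a priori range to be improved.

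\emph{Step 1: reduce to $\varphi_L$ birational.} Suppose $\varphi_L$ has degree $m\ge 2$ onto a nondegenerate curve $\Gamma\subset\P^r$, with normalisation $\Gamma'$ of genus $\gamma$. Then $\deg\Gamma=d/m\ge r$. Pulling back the $g^1_{d/m-r+1}$ obtained by projecting $\Gamma$ from $r-1$ general points of $\Gamma$ gives a pencil of degree $m(d/m-r+1)=d-mr+m$. Its Clifford index is $d-mr+m-2$, and for $m\ge 2$, $r\ge 2$ this is $<c=d-2r$ unless $m=2$. If $m=2$, the same comparison together with $c$ being computed by $L$ forces $\gamma\le 1$, i.e. $C$ hyperelliptic or bielliptic, which is excluded. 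The hypothesis $d\le g-1$ is what guarantees that these pencils still contribute ($h^1\ge 2$). So from now on $\varphi_L$ is birational onto a curve of degree $d$ in $\P^r$.

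\emph{Step 2: use $2L$.} For a birational nondegenerate curve in $\P^r$, Castelnuovo's lemma gives $h^0(2L)\ge\min(2d+1-g',\,3r)$, and in our range this minimum is $3r$. By Riemann--Roch,
\[
h^1(2L)\ge 3r-2d-1+g=g-1-2c-r .
\]
This is where the numerical hypothesis enters. With $r\le (c+4)/2$ and $g>2c+4$ (respectively $g>2c+5$, the parity shift coming from $r$ being an integer, so that $(c+4)/2$ rounds differently), we get $h^1(2L)\ge 2$, so $2L$ contributes to the Clifford index. Then
\[
c\le \Cliff(2L)\le 2d-6r+2=2c-2r+2 ,
\]
which only yields $r\le (c+2)/2$. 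This is too weak, so Step 2 has to be sharpened.

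\emph{Step 3 (main obstacle): a better lower bound for $h^0(2L)$.} The natural refinement is Castelnuovo's second bound: if $d\ge 2r+1$ then $h^0(2L)\ge 3r+1$ unless $\Gamma$ lies on a surface of minimal degree. More generally, one wants $h^0(2L)\ge 4r-c'$ for a suitable small $c'$ unless $\Gamma$ lies on a rational normal scroll or the Veronese surface. I would first treat the exceptional cases directly. If $\Gamma$ lies on a scroll, the ruling cuts out a pencil of degree at most about $d/(r-1)$, whose Clifford index is smaller than $c$ once $r$ exceeds the target bound. The Veronese case gives $C$ as a plane curve, and then the $g^2$ computes a Clifford index that contradicts the choice of $L$. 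Outside these cases, the improved estimate $h^0(2L)\ge 3r+(r-3)$ (or the analogous Harris-type bound) plugged into
\[
c\le 2d-2h^0(2L)+2
\]
should give $r\le c/4+3/2$, equivalently $d\le 3(c+2)/2$. Obtaining exactly this constant, and checking that $h^1(2L)\ge 2$ persists under the stated genus bounds in both parities, is the delicate part. I expect the genuine Coppens--Martens argument to proceed by an induction on $r$ via projections here, rather than through a single estimate.
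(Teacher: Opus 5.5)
The paper gives no proof of this statement; it is quoted from Coppens--Martens and used as a black box. Your sketch therefore has to stand on its own, and it does not. All of the content sits in Step 3, which is left as a hope, and the route you outline there cannot close as described. Along your route, $c\le\Cliff(2L)=2d-2h^0(2L)+2$ together with $d=c+2r$ gives $r\le c/4+3/2$ exactly when you know $h^0(2L)\ge 4r-2$. The bound $3r+(r-3)$ you suggest only yields $r\le c/4+2$, i.e.\ $d\le 3c/2+4$.

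More importantly, an estimate $h^0(2L)\ge 4r-2$ does not follow from excluding scrolls and the Veronese surface. Off surfaces of minimal degree, Castelnuovo's lemma only gives $h^0(2L)\ge 3r+1$. Moreover, if $\varphi_L(C)$ lies on a del Pezzo surface of degree $r$, or on a cone over an elliptic normal curve of degree $r$, the quadrics of $\P^r$ restrict to a space of sections of dimension at most $3r+1<4r-2$ (for $r\ge 4$). So no Castelnuovo-type count of conditions on quadrics can give what you need there. The missing idea is an argument that disposes of every curve lying on many quadrics, for instance by exhibiting, for each such configuration, a pencil on $C$ of Clifford index $<c$. Nothing in the proposal supplies this.

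Two intermediate claims are also incorrect.

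\emph{Step 2.} The bound $h^1(2L)\ge g-1-2c-r\ge 2$ requires $g\ge 2c+r+3$. For $r$ up to $(c+4)/2$ this is not implied by $g>2c+4$ (or $g>2c+5$), so even $r\le (c+2)/2$ is not established. Nor is the parity of $r$ where the genus hypothesis enters. The hypothesis is tied to the missing estimate: if $h^0(2L)\ge 4r-2$, then $h^1(2L)=h^0(2L)-2d+g-1\ge g-2c-3$, which is $\ge 2$ precisely when $g>2c+4$. It also cannot be dropped. The smooth plane quintic ($g=6$, $c=1$, $d=5$) and the complete intersection of two cubics in $\P^3$ ($g=10$, $c=3$, $d=9$) both violate the conclusion, and in both cases $2L\simeq\omega_C$.

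\emph{Step 1.} The assertion that $m=2$ forces $\gamma\le 1$ is false. Take a double cover $C\to\Gamma'$ of a smooth plane quintic with $g(C)=25$. By Castelnuovo--Severi it has gonality $8$, and it has Clifford index $6$ since no curve of higher Clifford dimension has genus $25$. The pull-back of the $g^2_5$ is a $g^2_{10}$ computing $\Cliff(C)$, with $\varphi_L$ of degree $2$ onto a curve of genus $6$. This case is harmless for the inequality: for $r=2$ it reads $2e\le 3e-3$. For $r\ge 3$ one can essentially rule out $m=2$, $\gamma\ge 2$ by comparing $\Cliff(M)=e-2r\ge \mathrm{gon}(\Gamma')-3$ with $c=2e-2r\le 2\,\mathrm{gon}(\Gamma')-2$, which forces $r\le 2$. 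But the reduction as you state it does not hold.
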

 \begin{remark}\label{cliffbassi}
  Let $L$ be a line bundle of degree $d\le g-1$ computing the Clifford index of $C$. The above results yield that, if $\Cliff(C)= 1$, then $L$ is either a $g^1_3$ or a $g^2_5$; since the arithmetic genus of a plane quintic is $6$,  our requirement on $d$ implies $g=6$ in the latter case (that is, the $g^2_5$ is automatically very ample). 
  
  Analogously, if $\Cliff(C)= 2$, then $L$ is of type either $g^1_4$, or $g^2_6$, or $g^3_8$, and the latter case may only occur for $g=9$. If we further require that $L$ computes the Clifford dimension of $C$, we can exclude that $L$ is a $g^3_8$ because curves of Clifford dimension $3$ are complete intersections of two cubics in $\P^3$ and have genus $10$ (cf. \cite{elms}). We recover the well-known fact that, if $\Cliff(C)= 2$, then $C$ is either tetragonal or a smooth plane sextic. 
  
  Finally, if $\Cliff(C)= 3$, then any line bundle $L$ of degree $d\le g-1$ computing the Clifford index is forced to be either a $g^1_5$, or a $g^2_7$, or, only when $g=10$, a $g^3_9$. If we further require that $L$ computes the Clifford dimension of $C$, we recover that, if $\Cliff(C)=3$, then  either $C$ is $5$-gonal, or $g=15$ and $C$ is a smooth plane septic, or $g=10$ and $C$ is a complete intersection of two cubics in $\mathbb P^3$.
  \end{remark}

\section{Definitions and first results}\label{tre}
\subsection{The Prym-canonical Clifford index and dimension} 

We introduce some new invariants associated with any Prym-curve.  
\begin{definition}
Let $(C,\eta)$ be a Prym curve of genus $g\geq 2$. Given $L \in \mathrm{Pic}(C)$ such that $h^0(L) \ge 1$, $h^0(L \otimes \eta) \ge 1$, the \emph{Prym-canonical Clifford index of $L$ with respect to $\eta$} is \[\Cliff_{\eta}(L):= h^0(\omega_C \otimes \eta) - h^0(L)-h^0(\omega_C \otimes \eta \otimes L^{\vee}) +1. \] 
\end{definition}
Denoting by $r(L)$ the dimension of the linear system $|L|$, the previous definition can be rewritten as \[\mathrm{Cliff}_{\eta}(L)= r(\omega_C \otimes \eta) - r(L)-r(\omega_C \otimes \eta \otimes L^{\vee}) . \]  
 \begin{definition}\label{oki}
The \emph{Prym-canonical Clifford index of $(C,\eta)$} is 
\begin{align}\label{cliffeta}
\Cliff_{\eta}(C):&= \mathrm{min} \{\mathrm{Cliff}_{\eta}(L) \ |   h^i(L) \ge 1, h^i(L \otimes \eta) \ge 1,\,\,i=0,1  \}=\\\nonumber&=\mathrm{min} \{\mathrm{Cliff}_{\eta}(L) \ |  \mathrm{deg}(L) \le g-1, h^0(L) \ge 1, h^0(L \otimes \eta) \ge 1  \}. 
\end{align}
A line bundle $L$ is said to {\em contribute to the Prym-canonical Clifford index} if both $h^i(L) \ge 1$ and $h^i(L \otimes \eta) \ge 1$ for $i=0,1$, and to {\em compute the Prym-canonical Clifford index} if  moreover $\Cliff_{\eta}(C)=\Cliff_{\eta}(L)$.
\end{definition}

\begin{remark}  The second equality in \eqref{cliffeta} follows from the fact that $\mathrm{Cliff}_{\eta}(L)=\mathrm{Cliff}_{\eta}(\omega_C \otimes \eta \otimes L^{\vee})$ and that any effective line bundle of degree $\leq g-1$ is special. Requiring $\mathrm{deg}(L) \le g-1$ as in the second line of \eqref{cliffeta} thus avoids considering both $L$ and $\omega_C \otimes \eta \otimes L^{\vee}$.\\Also note that, by Riemann-Roch, we obtain \[ \begin{split}
\Cliff_{\eta}(L) & = \deg(L)-h^0(L)-h^0(L\otimes \eta)+1 \\
& = \Cliff_{\eta}(L \otimes \eta) \\
& =\Cliff_{\eta}(\omega_C \otimes L^{\vee}).
\end{split}
\] 

\end{remark}
 
 \begin{remark}In the classical sense, a line bundle $L$ of degree $d\leq g-1$ is said to contribute to the Clifford index of $C$ if $h^0(L)\ge 2$, or equivalently, there exist two effective divisors $D, D' \in |L|$ such that the trivial bundle $\O_C \simeq \O_C(D-D')$. Our conditions $h^0(L) \ge 1$ and $h^0(L \otimes \eta)\ge 1$ in Definition \ref{oki} are a natural generalization as they are equivalent to the existence of two effective divisors $D\in |L|$ and $D'\in |L\otimes \eta|$ such that $\eta \simeq \O_C(D-D')$.\end{remark}

\begin{remark} \label{surj_diff_map}
    Since the difference map \[  \begin{split}
    \phi_d \colon C^d \times C^d & \rightarrow \Pic^0(C) \\
    (D, D') & \mapsto \O_C(D-D')
    \end{split}
    \] is surjective for all $d \ge \lfloor \frac{g+1}{2} \rfloor$ (cf. \cite[ch. 5]{acgh}), it is always possible to write $\eta= \O_C(D-D')$ with $D, D'$ effective of degree $\lfloor \frac{g+1}{2} \rfloor$. It follows that \begin{equation} \label{dis_diff_map}
    \Cliff_{\eta}(C)  \le \lfloor \frac{g+1}{2} \rfloor-1  
    \end{equation} 
\end{remark}

\begin{example} \label{hyperelliptic example}
Let $C$ be hyperelliptic and $\eta= \O_C(p-q)$ with $p, q$ ramification points of the $g^1_2$. Setting $L:=\O_C(p)$, we have $L \otimes \eta\simeq\O_C(2p-q)\simeq\O_C(q)$ so that $h^0(L)=h^0(L \otimes \eta)=1$ and $\Cliff_{\eta}(L)=0$. 
\end{example}

\begin{example} \label{ex2}
We now consider the case where $\omega_C \otimes \eta$ is base point free and $\eta= \O_C(p+q-x-y)$ with $2(p+q)\sim 2(x+y)$. Fix $L:=\O_C(p+q)$ so that $L \otimes \eta\simeq\O_C(2p+2q-x-y)\simeq\O_C(x+y)$. We claim that $ h^0(L)=1$. Indeed, if $L$ were a $g^1_2$, denoting by $\iota$ the hyperelliptic involution, we could write $\eta\simeq \O_C(\iota(x)-y)$; since $\eta^{\otimes 2}\simeq \mathcal O_C$, then $\iota(x)=x$ and $y$ would be ramifications points and this is a contradiction because, by Lemma \ref{first lemma}, $\omega_C\otimes \eta$ would have base points. Analogously, one shows that $h^0(L\otimes \eta)=1$ and thus $\Cliff_{\eta}(L)=1$. Theorem \ref{thm1} will then imply $\Cliff_{\eta}(C)=1$.
\end{example}

\begin{example} \label{biellipticex}
    Let $C$ be a bielliptic curve (that is, it admits a degree $2$ morphism $f \colon C \to E$ to an elliptic curve $E$) and consider a $2$ torsion line bundle on $C$ of the form $\eta= f^*(\epsilon)$ for some $\epsilon \in \Pic^0(E)[2] \setminus \{\O_E\}$. Since the Abel-Jacobi map $E \rightarrow \Pic^0(E)$ is an isomorphism, we can write $\epsilon\simeq\O_E(x-y)$ for some $x,y \in E$ such that $2x \sim 2y$, and $\eta\simeq\O_C(x_1+x_2-y_1-y_2)$, where $x_1,x_2$ (respectively, $y_1,y_2$) are the inverse images under $f$ of $x$ (resp., $y$) so that $2(x_1+x_2)\sim 2(y_1+y_2)$. We have thus fallen into the case covered by Example \ref{ex2} and $\Cliff_{\eta}(C)=1$.

\end{example}
We now introduce the following:
\begin{definition}
The \emph{Prym-canonical Clifford dimension of $C$} is defined as   
$$
\dim \Cliff_{\eta}(C):=\mathrm{min} \{(r,r') \ | \  \exists L\in \Pic(C),\,\ \deg(L)\le g-1, \  r=r(L), r'=r(L \otimes \eta), \Cliff_{\eta}(C)= \Cliff_{\eta}(L) \},
$$
where the minimum is taken with respect to the lexicographic order on $\mathbb Z^2$. 

We say that $L \in \Pic(C)$ {\em computes the Prym-canonical Clifford dimension} if $\Cliff_{\eta}(C)= \Cliff_{\eta}(L) $ and $\dim \Cliff_{\eta}(C)= (r(L),r(L \otimes \eta))$.
\end{definition}
\begin{remark}\label{compute}
Since the order is lexicographic and $\Cliff_{\eta}(L)=\Cliff_\eta(L\otimes \eta)$ for any $L$, it easily follows that a line bundle $L$ computing the Prym-canonical Clifford dimension satisfies  \mbox{$0\leq r(L)\leq r(L\otimes \eta)$}.
\end{remark}

\begin{proposition}\label{lemma1}
    For every Prym curve $(C,\eta)$ and any $r'\geq 1$, one has \[
    \dim \Cliff_{\eta}(C) \ne (0,r').
    \]
\end{proposition}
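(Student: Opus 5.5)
Arguing by contradiction, suppose $L$ computes the Prym-canonical Clifford dimension with $(r(L),r(L\otimes\eta))=(0,r')$, $r'\ge 1$. By Remark \ref{compute} we may take $\deg L\le g-1$, $h^0(L)=1$ and $h^0(L\otimes\eta)=r'+1\ge 2$. The plan is to build a line bundle $M$, with $\Cliff_\eta(M)\le\Cliff_\eta(L)$ and $r(M)=r(L)=0$, such that $r(M\otimes\eta)<r'$. Since $\Cliff_\eta(L)$ is already minimal, $M$ then also computes $\Cliff_\eta(C)$ and gives the lexicographically smaller pair $(0,r(M\otimes\eta))$, a contradiction.

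The natural choice is to twist down by a general point. Set $d=\deg L$, write $L=\O_C(D)$ with $D$ the unique effective divisor in $|L|$, and choose a point $p\in\operatorname{Supp} D$. Put $M:=L(-p)$. Then $h^0(M)=1$, since $D-p$ is effective and $h^0(M)\le h^0(L)=1$. Moreover $h^0(M\otimes\eta)=h^0(L\otimes\eta(-p))\ge r'\ge 1$, so $M$ still satisfies the effectivity conditions, and $\deg M=d-1\le g-1$. We get
\[
\Cliff_\eta(M)=(d-1)-1-h^0(L\otimes\eta(-p))+1 .
\]
If $p$ is not a base point of $|L\otimes\eta|$, then $h^0(L\otimes\eta(-p))=r'$ and $\Cliff_\eta(M)=d-1-r'=\Cliff_\eta(L)$, while $r(M\otimes\eta)=r'-1<r'$. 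This is the desired contradiction. If $p$ is a base point of $|L\otimes\eta|$, then $\Cliff_\eta(M)=\Cliff_\eta(L)-1$, which contradicts the minimality of $\Cliff_\eta(L)$.

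The one point needing care is $d=0$, where $D$ has no points. In that case $L=\O_C$ and $h^0(\eta)=0$, so this $L$ does not satisfy the conditions at all. Hence $d\ge 1$ and a point $p\in\operatorname{Supp} D$ exists.

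The main (mild) obstacle is checking that $M$ genuinely contributes. We need $h^1(M)\ge1$ and $h^1(M\otimes\eta)\ge 1$, and this follows from $\deg M\le g-1$ by the second description in Definition \ref{oki}. In fact any point of $\operatorname{Supp} D$ works; no generality is needed, because both subcases above lead to a contradiction.
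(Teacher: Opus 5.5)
Your proof is correct and is essentially the paper's argument: remove a point $p$ of the unique divisor in $|L|$, observe that $L(-p)$ still contributes with $h^0=1$ and $\Cliff_\eta(L(-p))\le\Cliff_\eta(L)$, and conclude that the pair drops to $(0,r'-1)$. The only difference is presentational: the paper absorbs your two subcases (whether or not $p$ is a base point of $|L\otimes\eta|$) into the single inequality $\Cliff_\eta(L(-p))\le\Cliff_\eta(C)$, which forces equality by minimality.
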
 

\begin{proof}
By contradiction, assume $  \dim \Cliff_{\eta}(C) = (0,r')$ for some $r'\geq 1$. Let $L$ be a line bundle computing the Prym-canonical Clifford dimension of $C$ and take $D=q_1+ \ldots +q_d \in |L|$.  Since $h^0(L)=1$, all the $q_i$ are base points of $L$ and, setting $L_1=L(-q_1)$, we have $h^0(L_1)=1$ and $h^0(L_1\otimes \eta)=h^0(L\otimes \eta(-q_1))\geq r'-1\geq 0$. We get
\[\begin{split}
\Cliff_{\eta}(L_1)& = \deg(L_1)-h^0(L_1)-h^0(L_1\otimes\eta)+1 \\
& \le \deg(L)-1-h^0(L)-h^0(L\otimes \eta)+1+1\\
& = \Cliff_{\eta}(L)=\Cliff_{\eta}(C),
\end{split}
\]
and thus equality holds. In conclusion, we have $\Cliff_{\eta}(L_1)=\Cliff_{\eta}(C)$ and $(r(L_1),r(L_1\otimes\eta))=(r(L),r(L\otimes\eta)-1)<(r(L),r(L\otimes\eta))$ in contradiction with the assumption that $L$ computes the Prym-canonical Clifford dimension of $C$.
\end{proof}

\begin{lemma}
\label{lemma3}
Let $L$ be a line bundle computing the Prym-canonical Clifford index of $(C,\eta)$. Then,  $L$ and $L\otimes \eta$ have no base points in common. \\If moreover $\dim \Cliff_{\eta}(C) \ge (1,1)$ and $L$ computes the Prym-canonical Clifford dimension of $(C, \eta)$, then both $L$ and $L\otimes \eta$ are base point free.
\end{lemma}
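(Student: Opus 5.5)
Both parts come from one operation: remove a point $p$ from $L$ and compare $\Cliff_\eta$ of $L(-p)$ with that of $L$.

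\emph{First statement.} Suppose $p$ is a base point of both $L$ and $L\otimes\eta$. Then $h^0(L(-p))=h^0(L)\ge 1$ and $h^0(L(-p)\otimes\eta)=h^0(L\otimes\eta)\ge 1$. Also $\deg L(-p)=\deg L-1\le g-2$, so $L(-p)$ contributes to the Prym-canonical Clifford index by the second description in \eqref{cliffeta}. Its index is
\[
\Cliff_\eta(L(-p))=\deg L-1-h^0(L)-h^0(L\otimes\eta)+1=\Cliff_\eta(L)-1,
\]
which contradicts the minimality of $\Cliff_\eta(L)=\Cliff_\eta(C)$. Hence $L$ and $L\otimes\eta$ have no common base point.

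\emph{Second statement.} Now let $L$ compute the Prym-canonical Clifford dimension, with $(r(L),r(L\otimes\eta))\ge(1,1)$. By Remark \ref{compute} we have $r(L)\le r(L\otimes\eta)$. Since $\dim\Cliff_\eta(C)\ge(1,1)$ and the order is lexicographic, either $r(L)\ge 1$, or $r(L)=0$ and $r(L\otimes\eta)\ge 1$. The latter case is excluded by Proposition \ref{lemma1}, so $1\le r(L)\le r(L\otimes\eta)$.

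Suppose $p$ is a base point of $L$; by the first part, $p$ is not a base point of $L\otimes\eta$. Set $L_1=L(-p)$. Then $h^0(L_1)=h^0(L)\ge 2$, $h^0(L_1\otimes\eta)=h^0(L\otimes\eta)-1\ge 1$, and $\deg L_1\le g-2$, so $L_1$ contributes. Its index is
\[
\Cliff_\eta(L_1)=(\deg L-1)-h^0(L)-(h^0(L\otimes\eta)-1)+1=\Cliff_\eta(L),
\]
so $L_1$ computes $\Cliff_\eta(C)$. Its dimension pair is $(r(L),r(L\otimes\eta)-1)$, which is lexicographically smaller than $(r(L),r(L\otimes\eta))$. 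This contradicts the choice of $L$, so $L$ is base point free.

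Suppose instead $p$ is a base point of $L\otimes\eta$. Apply the same argument to $L_1'=(L\otimes\eta)(-p)$. Here $h^0(L_1'\otimes\eta)=h^0(L(-p))=r(L)$ because $p$ is not a base point of $L$, and $h^0(L_1')=h^0(L\otimes\eta)$. So $L_1'$ again computes the index, with dimension pair $(r(L\otimes\eta),\,r(L)-1)$. This pair is not obviously smaller than $(r(L),r(L\otimes\eta))$, since its first entry $r(L\otimes\eta)$ may exceed $r(L)$. The fix is to pass to $L_1'\otimes\eta=L(-p)$, which has pair $(r(L)-1,\,r(L\otimes\eta))$. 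This is strictly smaller lexicographically and is computed by a line bundle of degree $\le g-2$, which is again a contradiction. Hence $L\otimes\eta$ is base point free as well.

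The step needing most care is this lexicographic bookkeeping: one must use the symmetry $\Cliff_\eta(M)=\Cliff_\eta(M\otimes\eta)$ and choose the member of the pair $\{M, M\otimes\eta\}$ whose first entry drops. One must also check that each new bundle still has $h^0\ge 1$ on both sides, which holds because $r(L)\ge 1$ and $r(L\otimes\eta)\ge 1$.
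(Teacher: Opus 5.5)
Your proof is correct and follows the paper's argument: subtract a base point, check that $\Cliff_\eta$ does not increase, and compare lexicographic pairs. Using the first part to get exact values of $h^0$ only streamlines the paper's ``$\le$, hence equality'' step. Your handling of a base point of $L\otimes\eta$ by passing to $L(-p)$, with pair $(r(L)-1,r(L\otimes\eta))$, is exactly what the paper's ``analogously'' has to mean. The case $r(L)=0$ that you exclude via Proposition \ref{lemma1} is already ruled out by the lexicographic hypothesis, so that step is redundant but harmless.
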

\begin{proof}
By  contradiction, assume $p$ is a base point of both $L$ and $L\otimes \eta$ and set $L_1:=L(-p)$. Since $h^0(L_1)=h^0(L)$ and $h^0(L_1\otimes\eta)= h^0(L\otimes \eta)$, the line bundle $L_1$ contributes to the Prym-canonical Clifford index and one easily computes that $\Cliff_\eta (L_1)=\Cliff_\eta(L)-1$, in contradiction with the equality $\Cliff_\eta(C)=\Cliff_\eta(L)$. \\ Assume $\dim \Cliff_{\eta}(C)=(r(L), r(L \otimes \eta)) \ge (1,1)$ and $p$ a base point of $L$. Setting $L_1=L(-p)$, $h^0(L_1)=h^0(L)$ and $h^0(L_1 \otimes \eta)=h^0(L(-p)\otimes \eta) \ge h^0(L \otimes \eta)-1 \ge 1$. We get \[
\begin{split}
\Cliff_{\eta}(L_1)&= \deg(L_1)-h^0(L_1)-h^0(L_1 \otimes \eta) +1 \\
& \le \deg(L)- 1 -h^0(L)-h^0(L \otimes \eta) +1 +1 \\
& = \Cliff_{\eta}(L)= \Cliff_{\eta}(C),
\end{split}
\] and thus equality holds. In conclusion, the line bundle $L_1$ computes the Prym-canonical Clifford index and $(r(L_1), r(L_1\otimes \eta))=(r(L), r(L\otimes \eta)-1)<(r(L),r(L\otimes \eta))$ in contradiction with the assumption that $L$ computes the Prym-canonical Clifford dimension. Analogously, one shows that $L \otimes \eta$ is base point free.
\end{proof}

\begin{lemma}
\label{lemma2}
Let $L$ be a line bundle contributing to the Prym-canonical Clifford index of $(C,\eta)$ such that $\deg L\leq g-1$, $h^0(L)\ge 2$ and $h^0(L\otimes \eta)\ge 2$. Then, the following equalities hold:\[ \begin{split}
2 \Cliff_{\eta}(L)&=\Cliff_{\eta}(L) + \Cliff_{\eta}(L\otimes \eta)=   \\
 & = \Cliff(L)+\Cliff(L\otimes \eta)-2.
\end{split}
\] In particular, one gets \begin{equation} \label{fiore}
\Cliff_{\eta}(L) \ge \Cliff(C)-1.
\end{equation}
\end{lemma}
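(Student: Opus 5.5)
The plan is to prove the identity by a direct computation from the formula $\Cliff_{\eta}(L)=\deg(L)-h^0(L)-h^0(L\otimes\eta)+1$ of the earlier Remark, together with the classical definition $\Cliff(M)=\deg(M)-2h^0(M)+2$. The first equality $2\Cliff_\eta(L)=\Cliff_\eta(L)+\Cliff_\eta(L\otimes\eta)$ is immediate, since $\Cliff_\eta(L)=\Cliff_\eta(L\otimes\eta)$ and $\eta^{\otimes 2}\simeq\O_C$.

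For the second equality, use $\deg(L\otimes\eta)=\deg(L)$ and sum:
\[
\Cliff(L)+\Cliff(L\otimes\eta)=2\deg(L)-2h^0(L)-2h^0(L\otimes\eta)+4 = 2\Cliff_\eta(L)+2 .
\]
This is purely formal and needs no hypothesis beyond the definitions.

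For the inequality \eqref{fiore}, we check that both $L$ and $L\otimes\eta$ contribute to the classical Clifford index of $C$. This requires $h^0\ge 2$ and $h^1\ge 2$ for each of them.
\begin{enumerate}
\item[(a)] $h^0\ge 2$ holds by hypothesis for both $L$ and $L\otimes\eta$.
\item[(b)] For $h^1$, Riemann--Roch gives $h^1(L)=h^0(L)-\deg L+g-1$. Since $\deg L\le g-1$, this yields $h^1(L)\ge h^0(L)\ge 2$.
\item[(c)] The same argument applies verbatim to $L\otimes\eta$, which has the same degree.
\end{enumerate}
Hence $\Cliff(L),\Cliff(L\otimes\eta)\ge\Cliff(C)$, and therefore $2\Cliff_\eta(L)\ge 2\Cliff(C)-2$, which is \eqref{fiore} after dividing by $2$.

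The only place needing care is step (b): one must confirm that $h^1\ge 2$ really follows from the degree bound $\deg L\le g-1$ rather than from the weaker contribution condition $h^1\ge 1$ in the Prym sense. Riemann--Roch settles this directly, so no step here presents a genuine obstacle.
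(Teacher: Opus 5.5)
Your proposal is correct and follows the paper's approach. The paper's proof only notes that $L$ and $L\otimes\eta$ both contribute to the Clifford index of $C$ and that everything else follows from the definitions; your argument spells out exactly that computation, including the Riemann--Roch check that $\deg L\le g-1$ gives $h^1\ge h^0\ge 2$.
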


\begin{proof}
Both $L$ and $L\otimes \eta$ contribute to the Clifford index of $C$. The inequality follows directly from the definition.
\end{proof}

To state the next result we recall that, having fixed a line bundle $L \in \Pic^d(C)$ with $h^0(C,L)=r+1$ and positive integers $0 \le f <e$, one introduces the secant variety \[
V^{e-f}_e(L):= \{ Z \in C_e \ : \ h^0(C, L(-Z))\ge r+1-e+f \}
\] parametrizing effective divisors $Z$ of degree $e$ which impose $f$ less than expected to $|L|$. They are a generalization of the Brill--Noether variety $W^r_d(C)$.

\begin{lemma} \label{lemma6}
Let $L$ be a line bundle computing the Prym-canonical Clifford dimension of $(C,\eta)$. Then, for every positive integers $1 \le f < e \le \min\{\deg L, r(L)+f\}$ and  $1 \le f'<e\le \min\{\deg L, r(L\otimes \eta)+f'\}$ such that $f+f' \ge e$, we have\[
V_e^{e-f}(L) \cap V_e^{e-f'}(L \otimes \eta)= \emptyset.
\]
\end{lemma}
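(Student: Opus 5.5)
We argue by contradiction, following the pattern of Proposition \ref{lemma1} and Lemma \ref{lemma3}: a divisor lying in both secant varieties lets us pass to a smaller line bundle with no larger Prym-canonical Clifford index and strictly smaller pair of dimensions. So suppose $Z \in V_e^{e-f}(L) \cap V_e^{e-f'}(L\otimes\eta)$, with $e,f,f'$ as in the statement, and set $L_1 := L(-Z)$, a line bundle of degree $\deg L - e \ge 0$.

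The first step is to show that $L_1$ still contributes. By definition of the secant varieties,
\[
h^0(L_1) \ge r(L)+1-e+f \ge 1, \qquad h^0(L_1\otimes\eta)=h^0(L\otimes\eta(-Z)) \ge r(L\otimes\eta)+1-e+f' \ge 1,
\]
where the lower bounds $\ge 1$ come from the hypotheses $e \le r(L)+f$ and $e\le r(L\otimes\eta)+f'$. Since $\deg L_1 < \deg L \le g-1$, the bundle $L_1$ contributes to the Prym-canonical Clifford index. (The case $\deg L_1=0$ needs a brief check: then $h^0(L_1)\ge1$ and $h^0(L_1\otimes\eta)\ge1$ would force $L_1\simeq\O_C\simeq\eta$, which is impossible. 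Hence $e<\deg L$ in effect, or this case is excluded outright.)

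The second step is the Clifford index computation:
\[
\Cliff_\eta(L_1) \le \deg L - e - (r(L)+1-e+f) - (r(L\otimes\eta)+1-e+f') + 1 = \Cliff_\eta(L) + e - f - f' \le \Cliff_\eta(L),
\]
using $f+f'\ge e$. Since $L$ computes $\Cliff_\eta(C)$, equality holds throughout. In particular $h^0(L_1)=r(L)+1-e+f$, so $r(L_1)=r(L)-(e-f)<r(L)$ because $e>f$. Thus $L_1$ computes the Prym-canonical Clifford index with $(r(L_1),r(L_1\otimes\eta))<(r(L),r(L\otimes\eta))$ lexicographically. This contradicts the assumption that $L$ computes the Prym-canonical Clifford dimension.

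The main obstacle is bookkeeping rather than an idea: one must verify that every hypothesis is used exactly where needed. The bounds $e\le r+f$ and $e\le r'+f'$ keep both $h^0$'s positive, $f+f'\ge e$ gives the inequality on the index, and $f<e$ gives the strict drop in the first coordinate. One must also handle the degenerate degree-zero case (or the case $e=\deg L$) so that $L_1$ genuinely contributes.
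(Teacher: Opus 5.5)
Your proof is correct and follows the paper's argument. You assume a common divisor $Z$, note that $L(-Z)$ still contributes by $e\le r(L)+f$ and $e\le r(L\otimes\eta)+f'$, and use $f+f'\ge e$ to contradict the minimality of $L$. The paper states the contradiction as a strict inequality $\Cliff_\eta(L)<\Cliff_\eta(L(-Z))$, which it attributes to dimension-minimality without further comment; you instead derive equality and exhibit the explicit drop $r(L(-Z))=r(L)-(e-f)$, which makes the lexicographic step and the degree-zero edge case more transparent.
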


\begin{proof}
By contradiction, assume there exists an effective divisor $D \in V_e^{e-f}(L) \cap V_e^{e-f'}(L \otimes \eta)$. Then, \begin{equation} \label{cccp}
\begin{split}
 h^0(L(-D))& \ge h^0(L)-e+f  \ge 1 \\
 h^0(L\otimes \eta(-D)) & \ge h^0(L \otimes \eta)-e+f'\ge 1,
\end{split}
\end{equation}
 so $L(-D)$ contributes to the Prym-canonical Clifford index. Since $L$ computes the Prym-canonical Clifford dimension of $C$, we have $\Cliff_\eta(L)< \Cliff_{\eta}(L(-D))$, or equivalently, \[ 
h^0(L)+ h^0(L \otimes \eta) > e+h^0(L(-D))+h^0(L \otimes \eta (-D)),
\] and inequalities \eqref{cccp} lead to the contradiction \[ h^0(L)+h^0(L \otimes \eta)> h^0(L)+h^0(L \otimes \eta)+ (f+f'-e).\] \end{proof}
The above result can be used to prove the following:
\begin{proposition} \label{lemma4}
 For every Prym curve $(C,\eta)$ of genus $g\geq 2$, one has \[
    \dim \Cliff_{\eta}(C) \ne (1,1).
    \]
\end{proposition}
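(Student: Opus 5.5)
Suppose, aiming for a contradiction, that $\dim \Cliff_{\eta}(C)=(1,1)$, and let $L$ compute the Prym-canonical Clifford dimension. Set $d:=\deg L\le g-1$. Then $h^0(L)=h^0(L\otimes\eta)=2$, and by Lemma \ref{lemma3} both $L$ and $L\otimes \eta$ are base point free pencils. They therefore define two morphisms $f,f'\colon C\to\P^1$ of degree $d$.

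\textbf{Step 1: apply Lemma \ref{lemma6}.} Take $e=2$ and $f=f'=1$. The hypotheses hold because $f+f'=e$ and $e\le \min\{d,\,r(L)+1\}=\min\{d,2\}$; we need $d\ge 2$, which is automatic since $L$ is a base point free pencil. Hence
\[
V^1_2(L)\cap V^1_2(L\otimes\eta)=\emptyset .
\]
Concretely, there is no effective divisor $D$ of degree $2$ lying both in a fiber of $f$ and in a fiber of $f'$. The rest of the argument shows that such a $D$ always exists.

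\textbf{Step 2: the case $d=2$.} Here $L$ and $L\otimes\eta$ are both $g^1_2$'s. A hyperelliptic curve has a unique $g^1_2$, so $L\simeq L\otimes\eta$ and $\eta$ is trivial, which is a contradiction. So assume $d\ge3$.

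\textbf{Step 3: the product map.} Consider $\phi=(f,f')\colon C\to \P^1\times\P^1$ and its image $\Gamma$.
\begin{itemize}
\item If $\phi$ is not birational onto $\Gamma$, a general fiber of $\phi$ contains two distinct points $p,q$. Then $p+q$ lies in a fiber of $f$ and in a fiber of $f'$, contradicting Step 1.
\item If $\phi$ is birational, then $\Gamma$ has bidegree $(d,d)$. Any singular point of $\Gamma$ produces a length-$2$ subscheme $D\subset C$ contained in fibers of both projections: either two distinct preimages $p+q$, or a point $p$ where $d\phi$ vanishes, giving $D=2p$. This again contradicts Step 1.
\end{itemize}
It therefore remains to exclude that $\phi$ is an isomorphism onto a smooth curve $\Gamma\simeq C$ of bidegree $(d,d)$.

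\textbf{Step 4: the smooth case (expected main obstacle).} In this case $\O_{\P^1\times\P^1}(1,0)|_C=L$ and $\O(0,1)|_C=L\otimes\eta$, so
\[
\eta\simeq\O(-1,1)|_C \quad\text{and}\quad \O_C\simeq\eta^{\otimes 2}\simeq \O(-2,2)|_C .
\]
I would contradict this by showing $H^0(\O_C(-2,2))=0$. From the restriction sequence
\[
0\to\O(-2-d,2-d)\to\O(-2,2)\to\O_C(-2,2)\to0,
\]
it suffices to have $H^0(\O(-2,2))=0$ and $H^1(\O(-2-d,2-d))=0$.
\begin{itemize}
\item $H^0(\O(-2,2))=0$ because the first factor has negative degree.
\item By Künneth, $H^1(\O(-2-d,2-d))$ is the sum of $H^0(\O_{\P^1}(-2-d))\otimes H^1(\O_{\P^1}(2-d))$ and $H^1(\O_{\P^1}(-2-d))\otimes H^0(\O_{\P^1}(2-d))$.
\item The first summand vanishes since $H^0(\O_{\P^1}(-2-d))=0$. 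The second vanishes since $H^0(\O_{\P^1}(2-d))=0$ for $d\ge3$.
\end{itemize}
Hence $H^0(\O_C)=0$, which is absurd.

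In all cases we obtain a contradiction, so $\dim\Cliff_\eta(C)\neq(1,1)$. The delicate point is Step 4: one has to be careful that $\phi$ is a genuine embedding, so that the restriction sequence applies. The other delicate point is the singular case of Step 3, namely that every singularity of $\Gamma$, including tangency-type ones, really yields a degree-$2$ divisor in $V^1_2(L)\cap V^1_2(L\otimes\eta)$.
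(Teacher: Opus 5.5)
Your proof is correct and follows the paper's argument. You get base point freeness from Lemma \ref{lemma3}, use Lemma \ref{lemma6} with $e=2$, $f=f'=1$ to force $C\to\P^1\times\P^1$ to be an embedding, and then rule out a smooth $(d,d)$ curve via the restriction sequence of $\O(\mp 2,\pm 2)$ and Künneth. Your case analysis (non-birational image, singular image) spells out the paper's one-line claim that a non-embedding yields a divisor in $V^1_2(L)\cap V^1_2(L\otimes\eta)$; you exclude $d=2$ by uniqueness of the $g^1_2$, whereas the paper observes that a smooth $(2,2)$ curve has genus $1$.
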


\begin{proof}
 Let $L$ be a line bundle computing the Prym-canonical Clifford dimension of $C$ and, by contradiction, assume $  \dim \Cliff_{\eta}(C) =(r(L),r(L\otimes \eta))= (1,1)$. Both $L$ and $L\otimes \eta$ are base point free by Lemma \ref{lemma3} and thus define a morphism 
 \[
f \colon C \xrightarrow{|L|, |L \otimes \eta|}\P^1 \times \P^1.
\]
Lemma \ref{lemma6} then forces $f$ to be an embedding. Indeed, if this were not the case, we would find a degree $2$ divisor $D$ such that $h^0(L(-D))=h^0(L\otimes \eta(-D))=1$, that is, $D\in V^1_2(L)\cap V^1_2(L\otimes \eta)$; this contradicts Lemma \ref{lemma6} for $e=2$ and $f=f'=1$.

We have thus reduced to exclude the case where $f$ embeds 
 $C\subset \P^1 \times \P^1$ as a curve of bidegree $(d,d)$ with $d=\deg L$. The cohomology of the following short exact sequence 
\[
0 \rightarrow\O_{\P^1\times \P^1}(2-d, -2-d)\rightarrow \O_{\P^1 \times \P^1}(2, -2)\rightarrow \O_C \rightarrow 0,
\] along with the K\"unneth Formula yields $d=2$, in contradiction with the assumption $g\geq 2$. 
\end{proof}

Lemma \ref{lemma6} has also the following corollary.
%\begin{remark}
%In the Examples \ref{hyperelliptic example}, \ref{ex2}, \ref{biellipticex} the Prym-canonical Clifford dimension of $C$ is $(0,0)$. \\Analogously, Corollary \ref{Cgen} yields that when $C$ is general, then $\dim \Cliff_{\eta}(C)=(0,0)$.
%\end{remark}
%\begin{remark}
%Lemma \ref{lemma6} implies Proposition \ref{lemma4}. Indeed, let $L, L \otimes \eta$ be as in the hypothesis of the lemma above. If $\dim \Cliff_{\eta}(C)= (1,1)$, the integers $e,f$ are such that $e-f=1$. A divisor $D=p_1+ \ldots+p_e \in V_e^1(L)$ is such that $h^0(L-D) \ge h^0(L)-1= 1 $, that is, the points $p_1, \ldots p_e$ are identified by the morphism $\phi_L$ defined by $|L|$. Similarly, the variety $V_e^{e-1}(L \otimes \eta)$ is the set of degree $e$ divisors such that their points are identified by the morphism $\phi_{L \otimes \eta}$ defined by $|L \otimes \eta|$. Hence, there is not a degree $e$ divisor in $C$ whose points are identified by both $\phi_L$ and $\phi_{L \otimes \eta}$. In particular, when $e=2$ this means that given divisors $M \in |L|$ and $M' \in |L \otimes \eta|$, then $M$ and $M'$ do not have a couple of points in common. Thus, if the morphism \[
%C \xrightarrow{|L|, |L\otimes \eta|} \P^1 \times \P^1
%\] is birational, it is an embedding.
%\end{remark}

\begin{corollary}
Assume that $\dim \Cliff_{\eta}(C)=(1,2)$ and let $L$  be a line bundle computing the Prym-canonical Clifford dimension of $(C,\eta)$. Then, no fiber of $\phi_L$ contains a degree $3$ divisor lying in the intersection of $\phi_{L\otimes \eta}(C)\subset  \P^2$ with a line. Furthermore, $\phi_{L\otimes \eta}$ does not identify any pair of points in a fiber of $\phi_L$.
\end{corollary}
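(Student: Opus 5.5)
Both assertions follow from Lemma \ref{lemma6}, applied to $L$ with $r(L)=1$ and $r(L\otimes\eta)=2$. It only remains to choose $e,f,f'$ in each case and check the numerical hypotheses. By Lemma \ref{lemma3}, both $L$ and $L\otimes\eta$ are base point free, since $\dim\Cliff_\eta(C)=(1,2)\ge(1,1)$. So $\phi_L\colon C\to\P^1$ and $\phi_{L\otimes\eta}\colon C\to\P^2$ are morphisms, and the geometric conditions can be translated into secant conditions.

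\emph{First assertion.} Suppose a fiber of $\phi_L$ contains a degree $3$ divisor $D$ that lies on a line section of $\phi_{L\otimes\eta}(C)$. Since $D$ sits inside a member of $|L|$, we get $h^0(L(-D))\ge 1=h^0(L)-3+2$, so $D\in V^{1}_3(L)$ with $e=3$, $f=2$. Since $D$ lies on a line, $h^0(L\otimes\eta(-D))\ge 1=h^0(L\otimes\eta)-3+1$, so $D\in V^{2}_3(L\otimes\eta)$ with $f'=1$. We check the hypotheses of Lemma \ref{lemma6}: $f+f'=3\ge e$; $e=3\le r(L)+f=3$; $e\le r(L\otimes\eta)+f'=3$. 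We also need $\deg L\ge 3$, which holds because $D$ lies in a member of $|L|$. Lemma \ref{lemma6} then gives $V^1_3(L)\cap V^2_3(L\otimes\eta)=\emptyset$, a contradiction.

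\emph{Second assertion.} Suppose $D=p+q$ (possibly $p=q$, read scheme-theoretically) lies in a fiber of $\phi_L$ and is not separated by $\phi_{L\otimes\eta}$. Take $e=2$, $f=1$, $f'=1$. Then $h^0(L(-D))\ge1=h^0(L)-1$, so $D\in V^1_2(L)$. Also $h^0(L\otimes\eta(-D))=h^0(L\otimes\eta)-1=2=h^0(L\otimes\eta)-2+1$, so $D\in V^1_2(L\otimes\eta)$. The hypotheses hold: $f+f'=2\ge e$ and $e\le \min\{\deg L, r+f\}$. Lemma \ref{lemma6} again gives a contradiction; this is the same argument as in Proposition \ref{lemma4}.

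The only delicate point is the bookkeeping in the first assertion. Lemma \ref{lemma6} requires $e\le r(L)+f$, which forces $f=2$. One must then check that "lying in a fiber of $\phi_L$" gives exactly $h^0(L(-D))\ge h^0(L)-1$, which is what $D\in V^1_3(L)$ requires. This holds because the fibers are members of the pencil $|L|$. For non-reduced $D$ one interprets "fiber" and "line section" as containment of divisors, and the same computation applies.
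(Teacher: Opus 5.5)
Your proposal is correct and follows the paper's proof: both apply Lemma \ref{lemma6} with $(e,f,f')=(3,2,1)$, which gives $V^1_3(L)\cap V^2_3(L\otimes\eta)=\emptyset$, and with $(e,f,f')=(2,1,1)$, which gives $V^1_2(L)\cap V^1_2(L\otimes\eta)=\emptyset$. Your explicit translation of the fiber and collinearity conditions into these secant conditions, using base point freeness from Lemma \ref{lemma3}, spells out what the paper leaves implicit.
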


\begin{proof}
Since $r(L\otimes \eta)=2$, we may apply Lemma \ref{lemma6} for $1\le e-f'\le2$ and $e-f=1$. Consider the case $e-f'=2$. The hypothesis of Lemma \ref{lemma6} are satisfied when $f+f' \ge e$, that is, $e \ge 3$, in which case we obtain that $V^1_e(L) \cap V^2_e(L \otimes \eta)= \emptyset$, and the statement follows taking $e=3$.

We now consider the case where $e-f'=e-f=1$, so that $f+f' \ge e$ as soon as $e\ge 2$. Lemma \ref{lemma6} then yields $V^1_e(L) \cap V^1_e(L \otimes \eta)= \emptyset$ and the last part of the statement follows choosing $e=2$.
\end{proof}

%\begin{remark}
%Assume that $\dim \Cliff_{\eta}(C)=(1,2)$ and let $L$ be the line bundle computing it. We consider the secant variety $V_{e'}^{e'-f'}(L \otimes \eta)$ with $e'-f'=1$. A divisor $D=p_1+ \ldots p_{e'} \in V_{e'}^1(L \otimes \eta)$ is such that the morphism $\phi_{L \otimes \eta}: C \rightarrow \bar{C}\subset \P^2$ sends the points $p_1, \ldots, p_e'$ in a singularity of $\bar{C}$, as $h^0(L \otimes \eta -D)\ge h^0(L \otimes \eta)-1 \ge 2$. By Lemma \ref{lemma6}, $D \notin V_{e'}^{e'-1}(L)$, that is, $p_1, \ldots p_{e'}$ impose at least two independent conditions on the image of $C$ under the morphism $\phi_L \colon C \rightarrow \P^1$.
%\end{remark}
The Prym-canonical Clifford index can be related to the secant varieties of $\omega_C\otimes \eta$ as follows:

\begin{remark}
    For any fixed integers $1 \le f <e$, we consider the secant variety \[ V^{e-f}_e(\omega_C \otimes \eta)=\{D \in C_e : h^0(C, \omega_C \otimes \eta(-D)) \ge g-1-e+f \}.\]
    One has the following inclusions: \[
V_e^{e-1}(\omega_C \otimes \eta) \supseteq V_e^{e-2}(\omega_C \otimes \eta) \supseteq \ldots \supseteq V_e^{1}(\omega_C \otimes \eta).
\]
    By the Riemann-Roch formula, the condition $D \in V^{e-f}_e(\omega_C \otimes \eta)$ is equivalent to the inequality $h^0(\O_C(D)\otimes \eta)\ge f$. Therefore, an effective line bundle $L=\O_C(D)$ of degree $e\leq g-1$ contributes to the Prym-canonical Clifford index of $(C,\eta)$ if and only if $D \in V_e^{e-1}(\omega_C \otimes \eta)$. If we take a divisor $D \in V_e^{e-f}(\omega_C \otimes \eta)$, then \begin{equation} \label{sss}
\begin{split}
\Cliff_{\eta}(\O_C(D))& = e-h^0(\O_C(D))-h^0(\O_C(D)\otimes \eta)+1\le \\
& \le e-h^0(\O_C(D))-f+1\le e-f,
\end{split}
\end{equation}
where the former (respectively, the latter) inequality is an equality as soon as $D \not\in V_e^{e-f-1}(\omega_C \otimes \eta)$ (resp., $D$ does not move). In particular, if the Prym-canonical Clifford dimension is $(0,0)$, then $\Cliff_{\eta}(C)=e_0-1$ where $e_0$ is the minimal $e$ such that $V_e^{e-1}(\omega_C \otimes \eta)$ is nonempty. We refer to \cite[Thm 1.1]{farkaslc} for nonemptiness results of $V^{e-f}_e(\omega_C \otimes \eta)$ when $(C,\eta)$ is a general genus $g$ Prym curve.
\end{remark}

\subsection{Prym-Clifford's theorem}
We are now ready to prove the analogue of Clifford's theorem.

\begin{proof}[Proof of Theorem \ref{clifford thm}]
Consider $L \in \Pic(C)$ such that $\Cliff_{\eta}(L)=\Cliff_{\eta}(C)$. Since $r(L) \ge 0$ and $r(\omega_C \otimes \eta \otimes L^{\vee}) \ge 0$, the inequality follows from \[ r(L)+r(\omega_C \otimes \eta \otimes L^{\vee}) \le r(\omega_C \otimes \eta) \]  (see \cite[Lemma 5.5, ch. IV]{hartshorne}). Now we prove the second part of the statement.

 When $C$ is hyperelliptic and $\eta$ is as in the statement, the equality $\Cliff_{\eta}(C) = 0$ follows from Example \ref{hyperelliptic example}. 

Viceversa, let $\Cliff_{\eta}(C) = 0$ and let $L$ be a line bundle computing the Prym-canonical Clifford dimension of $(C,\eta)$. If $r(L)=0$, then we have \[r(\omega_C \otimes \eta \otimes L^{\vee})=r(\omega_C \otimes \eta), \] that is, the only effective divisor in $|L|$ lies in the base locus of $|\omega_C \otimes \eta|$. Hence, in this case the conclusion follows from Lemma \ref{first lemma}. We now suppose $r(L)\geq 1$; since $L$ computes the Prym-canonical Clifford dimension of $(C,\eta)$, $r(L\otimes \eta) \ge r(L)\ge 1$ (cf. Remark \ref{oki}). 
Lemma \ref{lemma2} then yields \[
\Cliff(L)+ \Cliff(L \otimes \eta)=2.
\] Since $r(L \otimes \eta)\ge r(L)$, then $\Cliff(L)\ge \Cliff(L\otimes\eta)$; we obtain that $0\le\Cliff(L\otimes\eta)\le 1$. If $\Cliff(L\otimes \eta)=0$ and $\Cliff(L)=2$, then $C$ is hyperelliptic, $L\otimes \eta$ is a multiple of the $g^1_2$ and $L$ is a multiple of the $g^1_2$ plus $2$ base points in contradiction with Proposition \ref{lemma1}. 

 If $\Cliff(L)=\Cliff(L\otimes \eta)=1$, then $C$ cannot be hyperelliptic because otherwise $L$ and $L \otimes \eta$ would have base points. Hence, $\Cliff(C)=1$ and $L$ and $L\otimes \eta$ are either both of type $g^1_3$ or both very ample $g^2_5$ (cf. Remark \ref{cliffbassi}). The former case contradicts Proposition \ref{lemma4}. The latter case can be excluded thanks to the unicity of a $g^2_d$ on a smooth plane curve of degree $d \ge 4$ (\cite[p. 56]{acgh}). 
\end{proof}
 \begin{remark}
If $(C, \eta)$ has genus $2$, then the Prym-canonical system $|\omega_C \otimes \eta|$ has base points as it has dimension $0$; this agrees with the fact that every $2$-torsion line bundle on a genus $2$ curve can be written as $\eta=\O_C(p-q)$ where $p$ and $q$ are Weierstrass points. In particular, $\Cliff_{\eta}(C)=0$ for every Prym curve $(C,\eta)$ of genus $2$. 
 \end{remark}

\subsection{The étale double cover and its invariant Clifford index}
We end this section introducing a further invariant that encodes the relation between the Prym-canonical Clifford index of a Prym curve and its étale double cover.
\\Let $\pi \colon \tilde{C}\to C$ be the étale double cover induced by $\eta$ and let $\iota$ be the covering involution on $\tilde{C}$. The $\iota$-invariant line bundles on $\tilde{C}$ are parametrized by \begin{align*}
    \Pic(\tilde{C})^{\iota}: & = \{ \tilde{L} \in \Pic(\tilde{C}) \ | \ \iota^* \tilde{L} \simeq \tilde{L} \}= \\
    & = \{ \pi^*L \in \Pic(\tilde{C} \ ), \ L \in \Pic(C) \}.
\end{align*}
We introduce the following definition: \begin{definition}
    A line bundle $\tilde{L}\in \Pic(\tilde{C})$ \emph{contributes to the $\iota$-invariant Clifford index of $\tilde{C}$} if it is $\iota$-invariant and contributes to the Clifford index of $\tilde{C}$.
    
    The \emph{$\boldsymbol{\iota}$-invariant Clifford index of $\tilde{C}$} is \[
    \Cliff(\tilde{C})^{\iota}:= \mathrm{min} \Big \{ \Cliff(\tilde{L}) \ \Big | \ \tilde{L}\in \Pic(\tilde{C})^{\iota}, \ \deg(\tilde{L})\le g(\tilde{C})-1, h^0(\tilde{L})\ge 2 \Big \}.
    \]
\end{definition}
By the push-pull formula, a $\iota$-invariant line bundle $\tilde{L}= \pi^*L$ satisfies $\deg\tilde L=2\deg L$ and \[
h^0(\pi^*L)= h^0(L)+h^0(L \otimes \eta),
\] and thus $\tilde{L}= \pi^*L$ contributes to the $\iota$-invariant Clifford index of $\tilde{C}$ if and only if one of the following occurs: \begin{enumerate}
    \item[(a)] $L$ contributes to the Prym-canonical Clifford index of $(C, \eta)$;
    \item[(b)] one between $L$ and $L \otimes \eta$ contributes to the Clifford index of $C$.
\end{enumerate}
The next result relates the Prym-canonical Clifford index of $C$ and the $\iota$-invariant Clifford index of $\tilde{C}$.
\begin{proposition} \label{quasi}
    Let $(C, \eta)$ be a Prym curve of genus $g$ and $\pi \colon \tilde{C}\to C$ be the étale double cover induced by $\eta$, with covering involution $\iota$ on $\tilde{C}$. Denoting by $\mathrm{gon}(C)$ the gonality of $C$, one has: \[
    \Cliff(\tilde{C})^{\iota}= 2\mathrm{min} \{\Cliff_{\eta}(C),  \mathrm{gon}(C)-1 \}.
  \]
\end{proposition}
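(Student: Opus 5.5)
The plan is a direct computation: express $\Cliff(\pi^*L)$ in terms of $L$ with the push-pull formula, then minimise over the two cases (a) and (b) listed just before the statement. First the numerics. Since $\pi$ is étale of degree $2$, we have $g(\tilde C)=2g-1$. So the condition $\deg \pi^*L\le g(\tilde C)-1$ reads $2\deg L\le 2g-2$, that is $\deg L\le g-1$. For every $L$ of degree $d\le g-1$ the push-pull formula gives
\[
\Cliff(\pi^*L)=2d-2\bigl(h^0(L)+h^0(L\otimes\eta)\bigr)+2=2\bigl(d-h^0(L)-h^0(L\otimes\eta)+1\bigr).
\]
Every invariant bundle is of the form $\pi^*L$, so $\Cliff(\tilde C)^\iota$ is the minimum of this quantity over those $L$ with $d\le g-1$ and $h^0(L)+h^0(L\otimes\eta)\ge 2$.

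\emph{Lower bound.} There are two cases, according to whether $L$ satisfies (a) or only (b).
\begin{enumerate}
\item[(a)] If $h^0(L)\ge1$ and $h^0(L\otimes\eta)\ge1$, then $L$ contributes to the Prym-canonical Clifford index. The displayed quantity is exactly $2\Cliff_\eta(L)$, hence it is $\ge 2\Cliff_\eta(C)$.
\item[(b)] Otherwise one of the two bundles has no sections; replacing $L$ by $L\otimes\eta$ if necessary, we may assume $h^0(L\otimes\eta)=0$ and $h^0(L)=r+1\ge2$. The quantity becomes $2(d-r)$. Subtracting $r-1$ general points from $|L|$ leaves a linear series of dimension at least $1$ and degree $d-r+1$. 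Hence $d-r+1\ge \mathrm{gon}(C)$, and the quantity is $\ge 2(\mathrm{gon}(C)-1)$.
\end{enumerate}
Together these give $\Cliff(\tilde C)^\iota\ge 2\min\{\Cliff_\eta(C),\mathrm{gon}(C)-1\}$.

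\emph{Upper bound.} Take $L$ computing $\Cliff_\eta(C)$ with $\deg L\le g-1$, as allowed by Definition \ref{oki}. Then $h^0(\pi^*L)\ge2$, and $\Cliff(\pi^*L)=2\Cliff_\eta(C)$. Next take a complete pencil $A$ of degree $\mathrm{gon}(C)$; then $h^0(A)=2$, since a larger $h^0$ would produce a pencil of smaller degree. If $h^0(A\otimes\eta)=0$, then $\Cliff(\pi^*A)=2(\mathrm{gon}(C)-1)$. If instead $h^0(A\otimes\eta)\ge1$, then $A$ contributes to the Prym-canonical Clifford index and $\Cliff(\pi^*A)=2\Cliff_\eta(A)\le 2(\mathrm{gon}(C)-2)$. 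In both cases $\Cliff(\tilde C)^\iota\le 2\min\{\Cliff_\eta(C),\mathrm{gon}(C)-1\}$.

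The main obstacle is checking that $A$ is admissible, i.e.\ that $\deg A=\mathrm{gon}(C)\le g-1$. For $g\ge3$ this follows from $\mathrm{gon}(C)\le\lfloor (g+3)/2\rfloor\le g-1$. For $g=2$ the pencil $A$ is inadmissible, but Prym--Clifford's theorem and the Remark on genus $2$ give $\Cliff_\eta(C)=0$, so the minimum equals $0$ and is already realised by the first bundle. One should also double-check that in case (b) nothing changes when the roles of $L$ and $L\otimes\eta$ are swapped. This is clear, because $\pi^*L\simeq\pi^*(L\otimes\eta)$.
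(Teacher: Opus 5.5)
Your argument is correct in outline, but it reaches the lower bound in case (b) by a different route from the paper. The paper writes $\Cliff(\pi^*L)=2(\Cliff(L)+r(L))\ge 2(\Cliff(C)+1)$ and then splits according to the Clifford dimension of $C$. When that dimension is $>1$, it invokes the Coppens--Martens equality $\mathrm{gon}(C)=\Cliff(C)+3$ to conclude that the minimum of $\Cliff(L)+r(L)$ is $\mathrm{gon}(C)-1$. You instead use the elementary inequality $d-r\ge\mathrm{gon}(C)-1$, obtained by removing $r-1$ general points from $|L|$. This needs no input from Clifford-dimension theory and is shorter.

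Your upper bound is also more explicit than the paper's. The paper simply says the bound is "reached by a pencil computing the gonality". It leaves implicit both the possibility $h^0(A\otimes\eta)\ge 1$ and the requirement $\deg A\le g-1$. You treat the first case correctly: there $\Cliff(\pi^*A)\le 2(\mathrm{gon}(C)-2)$, so the minimum is $\Cliff_\eta(C)$ anyway.

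There is one slip in your admissibility check. The inequality $\lfloor (g+3)/2\rfloor\le g-1$ is false for $g=3$. A non-hyperelliptic genus-$3$ curve has $\mathrm{gon}(C)=3>g-1=2$, so $\pi^*A$ has degree $6>g(\tilde C)-1=4$ and does not contribute to $\Cliff(\tilde C)^\iota$.

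The conclusion survives by the same device you used for $g=2$. Whenever $\mathrm{gon}(C)>g-1$, the bound of Remark~\ref{surj_diff_map} gives $\mathrm{gon}(C)-1\ge g-1\ge\lfloor (g-1)/2\rfloor\ge\Cliff_\eta(C)$. Hence the minimum equals $\Cliff_\eta(C)$ and is realised by your first bundle. This single observation covers both $g=2$ and non-hyperelliptic $g=3$. With it, your proof is complete.
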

\begin{proof}
    Let $\tilde{L}= \pi^*L$ be a line bundle contributing to the $\iota$-invariant Clifford index of $\tilde{C}$. Without loss of generality, we can assume $h^0(L)\ge h^0(L \otimes \eta)$. If $L$ contributes to the Prym-canonical Clifford index of $(C, \eta)$, or equivalently, $h^0(L \otimes \eta)\ne 0$, one obtains \begin{equation*} \label{daje}
    \Cliff(\tilde{L})=2 \Cliff_{\eta}(L).
    \end{equation*} On the other hand, if $h^0(L \otimes \eta)=0$, then $L$ contributes to the Clifford index of $C$ and \begin{equation}\label{nuova}
    \Cliff(\tilde{L})=2\deg(L)-2h^0(L)+2= 2(\Cliff(L)+r(L)) \ge 2 (\Cliff(C)+1).
    \end{equation} If $C$ has Clifford dimension $1$, the lower bound $ 2(\Cliff(C)+1)$ is reached by choosing a pencil computing the gonality, and this finishes the proof since $\Cliff(C)=\mathrm{gon}(C)-2$. We now assume that $C$ has Clifford dimension $>1$, so that the inequality in \eqref{nuova} is never an equality. By \cite{cm}, one has $\mathrm{gon}(C)=\Cliff(C)+3$; therefore, the minimal value of $\Cliff(L)+r(L)$ as $L$ varies between the line bundles contributing to the Clifford index of $C$ is again obtained when $L$ is a pencil computing the gonality and equals $\Cliff(L)+r(L)=\Cliff(C)+2=\mathrm{gon}(C)-1$. 
\end{proof}

\begin{remark}
Proposition \ref{quasi} provides an alternative proof of Prym--Clifford's Theorem. Indeed, if $\Cliff_{\eta}(C)=0$, then $\Cliff(\tilde{C})^{\iota}=0$. The latter vanishing is equivalent to $\tilde{C}$ being hyperelliptic with a $\iota$-invariant $g^1_2$. Such a $g^1_2$ equals $\pi^*\O_C(p)$ for some $p \in C$ such that $\O_C(p)\otimes \eta$ is effective, that is, $\eta= \O_C(p-q)$ for some point $q$ such that $2p\sim 2q$, or equivalently, $p$ and $q$ are ramification points of the same $g^1_2$ on $C$.
\end{remark}

\section{Classification of curves with low Prym-canonical Clifford index}\label{quattro}
\subsection{Background information on curves of low gonality}In characterizing Prym curves $(C, \eta)$ with $ \Cliff_{\eta}(C)=1,2$, we will make use of some known results concerning linear series on trigonal, tetragonal and $5$-gonal curves, that we will here recollect.

We start by recalling the definition of the so-called Maroni invariant of a trigonal curve $C$. If $\pi \colon C \rightarrow \P^1$ is the $3-$sheeted covering induced by the $g^1_3$, then $\pi_*\omega_C=\O_{\P^1}(e_1)\oplus \O_{\P^1}(e_2)\oplus \O_{\P^1}(e_3)$ for some integers $e_1 \ge e_2 \ge e_3$, and $H^0(C, \omega_C \otimes kg^1_3)\simeq H^0(\P^1, \pi_*\omega_C\otimes \O_{\P^1}(k))\simeq \bigoplus_{i=1}^3 H^0(\P^1, \O_{\P^1}(e_i+k))$ (see \cite[p.172]{ms}). Thus, the integer $e_1$ is the maximum $r$ such that $\omega_C\otimes (rg^1_3)^{\vee}$ is effective. The Maroni invariant of $C$ can be defined as the integer  \begin{equation}\label{mardef}m:=g-2-e_1\end{equation} (see \cite[1.1, p. 172]{ms}). It satisfies the following (cf. \cite[(1.1)]{ms} and \cite[p.96]{bdc}): 
\begin{equation}\label{marin}\frac{g-4}{3}\le m \le \frac{g-2}{2},\,\,\,\,\,m-g \equiv 0 \ \mathrm{mod} \ 2.
\end{equation}

As concerns the Brill--Noether theory of a trigonal curve $C$, we define the varieties \[
U^r_d:= \left \{ 
\begin{aligned}
     & rg^1_3+D_{d-3r}, \ \ \ \mathrm{if}\ d-3r\ge0 \\ & \emptyset \ \ \ \ \ \ \ \ \ \ \ \ \ \ \ \ \ \ \ \ \ \ \   \ \mathrm{otherwise}
\end{aligned}
 \right \} \]
 \[V^r_d:= \left \{
\begin{aligned}
    & \omega_C-((g-d+r-1)g^1_3+  D_{2d-g-3r+1}), \ \ \  \mathrm{if} \ 2d-g-3r+1\ge0 \\
    & \emptyset, \ \ \ \ \ \ \ \ \ \ \ \ \ \ \ \ \ \ \ \ \ \ \ \ \ \ \ \ \ \ \ \ \ \ \ \ \ \ \ \ \ \ \ \ \ \ \ \ \ \ \ \ \ \ \ \ \ \ \ \ \ \ \ \ \  \mathrm{otherwise}
\end{aligned}
\right \}.
\] 

\begin{proposition}\cite[Prop. 1, p.173]{ms} \label{maroni}
    Let $C$ be a trigonal curve of genus $g$. For $d \le g-1$ and $r \ge 1$ we have: \begin{enumerate}
    \item $W^r_d(C)=U^r_d \cup V^r_d$;
    \item if $U^r_d \ne \emptyset$, then it is an irreducible component of $W^r_d(C)$;
    \item Assume $V^r_d \ne \emptyset$. Then also $U^r_d \ne \emptyset$, and $V^r_d$ is an irreducible component of $W^r_d$ different from $U^r_d$ if and only if $g-d+r-1 \le m$, where $m$ is the Maroni invariant.
    \end{enumerate}
\end{proposition}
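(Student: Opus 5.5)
The statement is Maroni's proposition from \cite{ms}, and I would prove it from the structure of the trigonal scroll. Let $g^1_3$ be the trigonal pencil and $\pi\colon C\to\P^1$ the induced cover. For $g\ge 5$ the $g^1_3$ is unique, and the canonical model of $C$ lies on the rational normal scroll $S$ swept out by the lines spanned by the fibres of $\pi$. The Maroni invariant $m$ records the type of this scroll, namely through the splitting of $\pi_*\omega_C$ recalled above.

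\textbf{Step 1: every $g^r_d$ with $d\le g-1$ lies in $U^r_d\cup V^r_d$.} Take $L\in W^r_d(C)$ with $r\ge 1$, and let $k$ be the largest integer such that $L\otimes(kg^1_3)^\vee$ is effective. Write $L=kg^1_3+B$ with $B$ effective of degree $d-3k$.

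If $k\ge r$, then $L\in U^r_d$, because $rg^1_3+D_{d-3r}$ is exactly the set of such bundles.

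If $k<r$, I pass to the residual bundle $K:=\omega_C\otimes L^\vee$. It has $h^0(K)=g-d+r$ and degree $2g-2-d$. Using the geometric Riemann--Roch on the scroll, a divisor $D\in|L|$ spans a linear space of dimension $d-1-r$. Its intersection with the rulings controls how many full fibres of $\pi$ occur in $|K|$. The computation should show that $K=(g-d+r-1)g^1_3+D'$ with $D'$ effective of degree $2d-g-3r+1$, which is precisely the condition $L\in V^r_d$.

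The cleanest route is via the splitting: write $H^0(L)$ as $H^0(\P^1,\pi_*L)$ with $\pi_*L=\bigoplus_{i=1}^{3}\O(a_i)$ and $a_1\ge a_2\ge a_3$.

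\textbf{Step 2: the dichotomy in Step 1.} The number of nonnegative $a_i$ determines which case occurs.
\begin{itemize}
\item If only $a_1\ge 0$, then every section of $L$ is pulled back from $\P^1$ up to the fixed part. Hence $L=a_1g^1_3+B$ with $a_1=r$, and $L\in U^r_d$.
\item If at least two of the $a_i$ are nonnegative, apply relative duality, $\pi_*(\omega_C\otimes L^\vee)=(\pi_*L)^\vee\otimes\pi_*\omega_{C/\P^1}\otimes\O(-2)$. Together with $\deg\pi_*L=d-g-2$, this gives the required multiple of $g^1_3$ inside $\omega_C\otimes L^\vee$.
\end{itemize}

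\textbf{Step 3: the component structure.} $U^r_d$ is the image of $C_{d-3r}$, hence irreducible of dimension $d-3r$. Its generic member has $h^0=r+1$ exactly; I would check this with the scroll picture when $d\le g-1$. Likewise $V^r_d$ is irreducible of dimension $2d-g-3r+1$.

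\begin{itemize}
\item \emph{$U^r_d$ is a component.} No family of $g^r_d$'s of larger dimension contains it. This follows because $V^r_d$ has dimension $2d-g-3r+1\le d-3r$ when $d\le g-1$, so it cannot contain $U^r_d$.
\item \emph{$V^r_d\ne\emptyset$ implies $U^r_d\ne\emptyset$.} This is the inequality $2d-g-3r+1\le d-3r$.
\item \emph{When $V^r_d\not\subseteq U^r_d$.} A generic element of $V^r_d$ has the form $\omega_C-(g-d+r-1)g^1_3-D$. It lies in $U^r_d$ exactly when it contains $rg^1_3$ as a subpencil multiple. By the definition of $m$, the bundle $\omega_C-ag^1_3$ is effective with the expected size of $h^0$ precisely for $a\le e_1=g-2-m$. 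Translating, $V^r_d$ is a distinct component if and only if $g-d+r-1\le m$.
\end{itemize}

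I expect the main obstacle to be this last equivalence. It requires computing $h^0(\omega_C-ag^1_3-D)$ for general $D$ from the splitting $\bigoplus\O(e_i+k)$, and comparing it with the threshold set by $e_2$ and $e_3$. I would handle it entirely through $\pi_*$ and the explicit description of the line-bundle splitting types given in \cite{ms}.
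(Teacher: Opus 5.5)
The paper gives no proof of this proposition; it is quoted from Martens--Schreyer. Your outline therefore has to stand on its own. It has a repairable error in (1) and a genuine gap in (3).

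For (1), the splitting strategy is right, but your duality formula is wrong. The correct statement is $\pi_*(\omega_C\otimes L^\vee)\cong(\pi_*L)^\vee\otimes\O_{\P^1}(-2)$; with your extra factor $\pi_*\omega_{C/\P^1}$ the right-hand side would have rank $9$. Moreover, ``this gives the required multiple'' skips the only computation, which runs as follows:
\begin{itemize}
\item Since $d\le g-1$, one has $h^1(L)\ge h^0(L)\ge 2$, which forces $a_3\le -2$.
\item In your second case this gives $a_1+a_2=h^0(L)-2$, and then $\sum a_i=d-g-2$ yields $-a_3-2=g-d+h^0(L)-2\ge g-d+r-1$.
\item The summand $\O_{\P^1}(-a_3-2)$ of $\pi_*(\omega_C\otimes L^\vee)$ then gives, by adjunction, $(g-d+r-1)g^1_3\hookrightarrow\omega_C\otimes L^\vee$, i.e.\ $L\in V^r_d$.
\end{itemize}
This also handles $h^0(L)>r+1$, which you tacitly excluded by writing $a_1=r$. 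In (2), note that for $d=g-1$ the two dimensions coincide. The correct reason is therefore that the irreducible $V^r_d$ cannot \emph{strictly} contain $U^r_d$.

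Part (3) is the only non-formal assertion, and it is left unproved. You flag it as ``the main obstacle'', and the heuristic you offer does not deliver it. The bundle $\omega_C-ag^1_3$ is effective for all $a\le e_1$, but it has the minimal value $h^0=g-2a$ only for $a\le m+1$. Also, $e_3$ plays no role.

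What is needed is the following. Set $k=g-d+r-1$ and $e=2d-g-3r+1=g-2k-r-1\ge 0$. Since $W^r_d=U^r_d\cup V^r_d$ with both pieces closed and irreducible, $V^r_d$ is a component different from $U^r_d$ iff $V^r_d\not\subseteq U^r_d$. This holds iff $\omega_C-(k+r)g^1_3-D$ is not effective for general $D\in C_e$, i.e.\ iff $h^0(\omega_C-(k+r)g^1_3)\le e$.

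From $\chi(\pi_*\omega_C)=g-1$ and $h^1(\pi_*\omega_C)=1$ one gets $\pi_*\omega_C\cong\O(g-2-m)\oplus\O(m)\oplus\O(-2)$. Hence $h^0(\omega_C-jg^1_3)=\max(g-1-m-j,0)+\max(m+1-j,0)$ for $j\ge 0$. Take $j=k+r$.
\begin{itemize}
\item If $k\ge m+1$, this equals $g-1-m-k-r\ge g-2k-r=e+1$, so $V^r_d\subseteq U^r_d$.
\item If $k\le m$, it equals $g-2k-2r$ or $\max(g-1-m-k-r,0)$. Both are $\le e$, using $r\ge 1$, $2m\le g-2$ and $e\ge 0$.
\end{itemize}
This is exactly the criterion $g-d+r-1\le m$.
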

\begin{corollary} \cite[Cor. 2, p. 175]{ms} \label{trig}
Let $C$ be a trigonal curve of genus $g$ and $L$ a line bundle on $C$ of degree $0 \le d \le g-1$. Then $h^0(L)\le \frac{d}{3}+1$, and equality holds if and only if either $L=\frac{d}{3} g^1_3$, or $d=g-1$ and $L=\omega_C-(\frac{d}{3} g^1_3)$. Furthermore, in the latter case we have $\omega_C - (\frac{d}{3} g^1_3)=\frac{d}{3} g^1_3$ if and only if the Maroni invariant $m$ is minimal (that is, $m= \frac{g-4}{3}$).
\end{corollary}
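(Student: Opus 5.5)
The plan is to read the statement off the Martens--Schreyer description of $W^r_d(C)$ in Proposition \ref{maroni}, and then to treat the Maroni-invariant clause using the definition \eqref{mardef} of $m$ through $e_1$.

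\emph{The case $r=0$.} Set $r:=h^0(L)-1$. If $r\le 0$, then $h^0(L)\le 1\le \frac d3+1$. Equality forces $d=0$ and $h^0(L)=1$, so $L=\O_C=0\cdot g^1_3$.

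\emph{The case $r\ge 1$.} Here $L\in W^r_d(C)$ with $d\le g-1$, so Proposition \ref{maroni}(1) gives $L\in U^r_d\cup V^r_d$.
\begin{itemize}
\item If $L\in U^r_d$, then $L=rg^1_3+D$ with $\deg D=d-3r\ge 0$. Hence $r\le d/3$, with equality if and only if $D=0$, that is, $L=\frac d3 g^1_3$.
\item If $L\in V^r_d$, then $L=\omega_C-((g-d+r-1)g^1_3+D)$ with $\deg D=2d-g-3r+1\ge 0$. Thus
\[
3r\le 2d-g+1\le d,
\]
where the last step uses $d\le g-1$. Equality throughout forces $d=g-1$ and $D=0$. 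In that case $g-d+r-1=r=\frac d3$, so $L=\omega_C-\frac d3 g^1_3$.
\end{itemize}
For the converse of the equality statement, $h^0(\frac d3 g^1_3)=\frac d3+1$. This holds because $h^0(g^1_3)=2$ and the multiples of the pencil already realise the $U$-component. When $d=g-1$, Riemann--Roch gives $h^0(\omega_C-\frac d3 g^1_3)=h^0(\frac d3 g^1_3)$, since the two bundles have the same degree $g-1$.

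\emph{The Maroni clause.} Take $d=g-1$ and $s:=\frac{g-1}{3}$. The equality $\omega_C-sg^1_3=sg^1_3$ is the same as $\omega_C\simeq 2s\,g^1_3$.
\begin{itemize}
\item Suppose $\omega_C\simeq 2s\,g^1_3$. Then $\omega_C-2s\,g^1_3$ is effective, so $e_1\ge 2s=\frac{2(g-1)}3$ by the characterisation of $e_1$. Therefore $m=g-2-e_1\le\frac{g-4}3$, and the lower bound in \eqref{marin} forces $m=\frac{g-4}{3}$.
\item Conversely, suppose $m=\frac{g-4}{3}$. Then $e_1=g-2-m=2s$, so $\omega_C-2s\,g^1_3$ is effective. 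Its degree is $2g-2-6s=0$, so it is trivial, and hence $\omega_C\simeq 2s\,g^1_3$.
\end{itemize}

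\emph{Main obstacle.} The only delicate point is justifying the value $h^0=\frac d3+1$ in the converse direction. This can be done either via the splitting $\pi_*\omega_C=\bigoplus_i\O_{\P^1}(e_i)$ or via Riemann--Roch as above. The rest is bookkeeping on the degrees of the divisors $D$ in Proposition \ref{maroni}.
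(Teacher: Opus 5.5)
Your argument is correct. The paper gives no proof of this statement; it is quoted from Martens--Schreyer. Your derivation is exactly how it follows as a corollary there: you read the bound and the equality cases off the decomposition $W^r_d(C)=U^r_d\cup V^r_d$ of Proposition~\ref{maroni}, and the Maroni clause off the characterization of $e_1$ together with the lower bound in \eqref{marin}.

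The one step worth stating more carefully is the lower bound $h^0(\tfrac d3 g^1_3)\ge \tfrac d3+1$. It comes from the injection $\pi^*\colon H^0(\mathbb{P}^1,\mathcal{O}_{\mathbb{P}^1}(\tfrac d3))\hookrightarrow H^0(C,\tfrac d3 g^1_3)$, and your upper bound then forces equality. Riemann--Roch only transfers this value to the residual $\omega_C-\tfrac d3 g^1_3$ when $d=g-1$; it does not compute $h^0(\tfrac d3 g^1_3)$ itself.
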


Linear series on a tetragonal curve can be described in terms of a fixed $g^1_4$ as follows.
\begin{corollary} \cite[Cor. 1.10]{cm00} \label{quadr}
Let $C$ be a tetragonal curve. Fix a $g^1_4$ on $C$. Then any base-point free $g^r_d$ on $C$ such that both $r \ge 1$ and $r':=g-d+r-1\ge 1$ is with respect to the fixed $g^1_4$ of one of the following types: \begin{enumerate}
\item $g^r_d= r g^1_4$;
\item the residual of the $g^r_d$ is  $r'g^1_4$ plus an effective divisor $F$;
\item $g^r_d=(r-1)g^1_4+F$, for some effective divisor $F$; \label{abc}
\item $g^r_d= l + kg^1_4$, with $l$ of type \eqref{abc} and $\dim l= r-2k{\geq 1}$.
\end{enumerate}
\end{corollary}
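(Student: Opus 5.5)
Since this is quoted from \cite{cm00}, I would recover it by an induction on $r$ driven by the base point free pencil trick. Throughout, $A$ denotes the fixed $g^1_4$, $L$ the line bundle of the base point free $g^r_d$, and $K:=\omega_C\otimes L^{\vee}$ its residual, with $h^0(K)=r'+1\ge 2$. The roles of $L$ and $K$ are symmetric, and swapping them exchanges types (1)--(3) with their residual versions; this symmetry is what produces type (2).

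The basic tool is the sequence of drops
\[
\delta_j:=h^0(L-jA)-h^0(L-(j+1)A),\qquad j\ge 0 .
\]
The base point free pencil trick identifies the kernel of $H^0(A)\otimes H^0(L-jA)\to H^0(L-(j-1)A)$ with $H^0(L-(j+1)A)$. This gives $\delta_{j-1}\ge \delta_j$, so the drops are nonincreasing as long as $L-jA$ is effective. Riemann--Roch gives the dual relation
\[
\delta_0=4-\bigl(h^0(K+A)-h^0(K)\bigr),
\]
so a large drop for $L$ forces a small drop for $K$, and conversely. I will assume this duality, checked once, for use in all cases below.

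Step 1 (all drops equal to $1$). Suppose $\delta_j=1$ for $j<r$. Then $h^0(L-rA)=1$, so $L=rA+F$ with $F$ effective. If $F=0$ this is type (1). If $F\neq 0$, base point freeness of $L$ together with $\delta=1$ should allow the last copy of $A$ to be absorbed, giving $|L|=(r-1)A+F'$ with $|F'|$ a pencil that is not composed with $A$. This is type (3).

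Step 2 (some drop is at least $2$). If $\delta_0\ge 2$, the duality forces $h^0(K+A)-h^0(K)\le 2$. Now run Step 1 on $K$, using induction on $r'$ where needed. If $K$ is a multiple of $A$ plus base points, then $L$ is of type (2). Otherwise $\delta_0=2$ with all later drops also equal to $2$ until they change. In that case one splits $L=l+kA$, where $k$ counts the indices with $\delta_j\ge 2$ and $l$ has only unit drops, so $l$ is of type (3) by Step 1. The dimension count $\dim l=r-2k$ then comes from summing the $\delta_j$. This is type (4).

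Main obstacle. The delicate step is Step 2: controlling $\delta_j\in\{2,3\}$ and excluding a drop of $3$ (or $4$) unless one is in the residual case (2). I would attack it geometrically. On the canonical model, each divisor of $|A|$ spans only a plane, so it imposes $3$ conditions on $|\omega_C|$. By geometric Riemann--Roch this bounds how many conditions a divisor of $|A|$ can impose on $|K|$ once $|L|$ drops by $\ge 3$. From there I would try to show that $K$ contains $r'A$. If this direct argument fails, I would fall back on the rational normal scroll containing the canonical curve and the Maroni-type splitting of $\pi_*\omega_C$ along the $g^1_4$. This is the analogue of the trigonal description in Proposition \ref{maroni}.
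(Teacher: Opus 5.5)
The paper gives no proof of this statement (it is quoted from Coppens--Martens), so your argument has to stand on its own. Your tools are the right ones, but there is a genuine gap at exactly the step you call the main obstacle: you never prove that $\delta_0\ge 3$ forces type (2). Step 2 as written does not follow. The inequality $h^0(K+A)-h^0(K)\le 2$ does not put $K$ under the hypothesis of Step 1 (all drops equal to $1$), and the canonical-model and scroll arguments are only a plan.

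The missing idea is to run the same pencil trick on $K$ one step up. The kernel of $H^0(A)\otimes H^0(K)\to H^0(K+A)$ is $H^0(K-A)$, so
\[
h^0(K+A)-h^0(K)\ \ge\ h^0(K)-h^0(K-A)\ \ge\ h^0(K-A)-h^0(K-2A)\ \ge\ \cdots .
\]
Moreover $h^0(K)-h^0(K-A)\ge 1$, because $h^0(K)=r'+1\ge 2$ and $A$ is base point free. Hence $\delta_0\le 3$ always, so a drop of $4$ never occurs. If $\delta_0=3$, then $h^0(K+A)-h^0(K)=1$, which forces every drop of $K$ to equal $1$ until $h^0$ vanishes. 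Thus $h^0(K-r'A)=1$, which is type (2), and no base point freeness of $K$ is needed. If $\delta_0\le 2$, monotonicity already bounds all later drops, so nothing of the form $\delta_j\in\{2,3\}$ is left to control and no geometry of the canonical curve is required.

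Three smaller points also need repair.
\begin{enumerate}
\item In Step 1, the claim that $|F'|$ is a pencil not composed with $A$ is false: $h^0(A+F)=2=h^0(A)$ gives $|A+F|=|A|+F$. In fact $h^0(rA)\ge r+1=h^0(L)$ gives $|L|=|rA|+F$, so base point freeness forces $F=0$, and the all-unit-drops case is exactly type (1).
\item In Step 2, $l=L-kA$ need not be base point free, so you cannot appeal to Step 1. You only need $l-(\dim l-1)A$ to be effective, which the unit drops give.
\item If the run of $2$'s ends with $h^0(L-kA)\in\{0,1\}$ (i.e.\ $r=2k-1$ or $r=2k$), your $l$ has $\dim l\le 0$, violating $r-2k\ge 1$. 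For $k\ge 2$, replace $k$ by $k-1$. Then $l$ is either a pencil, or $l=A+F$ with $h^0(l)=3$; in the latter case its drops are $2,1$, yet it is still of the form (3). For $k=1$, the bundle $L$ itself is of type (3).
\end{enumerate}
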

 Analogously, having fixed a $g^1_5$ on a curve of gonality $5$, its linear series are of the following types.
\begin{lemma}\cite[Lemma 1]{park} \label{5gonal}
  Let $C$ be a $5-$gonal curve. Fix a $g^1_5$. For any base point free $g^r_d$ on $C$ with both $r\geq 1$ and $r':=r-d+g-1\geq 1$, denoting by $B$ the base locus of $|\omega_C-g^r_d|$, one of the following holds with respect to the fixed $g^1_5$:  \begin{enumerate}
      \item $g^r_d=rg^1_5$;
      \item $g^r_d=(r-1)g^1_5+E$ for some effective divisor $E$ with $\dim (r-2)g^1_5+E=r-2$;
      \item $g^r_d-g^1_5$ has dimension $r-2\geq 1$ and moreover it is non-trivial, that is, it is base point free with both $h^0\ge 2$ and $h^1\geq 2$;
      \item $\omega_C-g^r_d-B=r'g^1_5$;
      \item $\omega_C-g^r_d-B=(r'-1)g^1_5+E$ for some effective divisor $E$ with $\dim (r'-2)g^1_5+E=r'-2$;
      \item $\omega_C-g^r_d-B-g^1_5$ has dimension $r'-2\geq 1$ and moreover it is non-trivial.
  \end{enumerate}  
\end{lemma}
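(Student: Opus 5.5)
The plan follows Coppens--Martens' treatment of tetragonal curves (Corollary \ref{quadr}), with the fixed $g^1_5$, call it $A$, in place of the $g^1_4$. Write $L$ for the base point free $g^r_d$ and $K:=\omega_C\otimes L^{\vee}$, so that $r(K)=r'$. The six types occur in pairs: (1)–(3) describe $L$ itself, and (4)–(6) are the same descriptions applied to $\omega_C-g^r_d-B$, which is the base point free part of $K$. The statement is therefore symmetric under $L\leftrightarrow K-B$. The strategy is to study how far the dimension drops under $L\mapsto L\otimes A^{\vee}$. We will show that either $L$ falls into (1)–(3), or the same analysis applied to $K-B$ places it in (4)–(6).

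\emph{Step 1: the drop under subtracting $A$.} We use two standing facts. First, since $C$ is $5$-gonal it has $\Cliff(C)=3$ and Clifford dimension $1$; indeed the exceptional cases of Clifford index $3$ (plane septics and complete intersections of two cubics in $\P^3$) have gonality $6$, by Remark \ref{cliffbassi}. Second, the base-point-free pencil trick gives $\ker\bigl(H^0(A)\otimes H^0(L-A)\to H^0(L)\bigr)\simeq H^0(L-2A)$, hence
\[
h^0(L)\ \ge\ 2h^0(L-A)-h^0(L-2A).
\]
Set $s:=r(L-A)\le r-1$. We then split into three cases.
\begin{itemize}
\item[(i)] If $s=r-1$, iterate the subtraction. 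When $r(L-jA)=r-j$ for all $j$, we reach $L=(r-1)A+E$ with $r(E)=1$, and then $E=A$ or $E$ is a pencil of the form required in (2). If the drop exceeds $1$ at some later stage, apply case (ii) or (iii) to $L-A$ and lift back.
\item[(ii)] If $s=r-2$ and $L-A$ is nontrivial in the stated sense (its base point free part has $h^0\ge2$ and $h^1\ge2$), we are in type (3). If instead $L-A$ is trivial, a degree count puts us in (2): either $r-2=0$, or $L-A$ is $(r-2)A$ plus fixed points.
\item[(iii)] If $s\le r-3$, $L$ is ``far'' from multiples of $A$. Here we must show that the residual $K-B$ has small drop under subtracting $A$, so that it falls into case (i) or (ii) and $L$ is of type (4)–(6).
\end{itemize}

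\emph{Step 2: treating case (iii).} I expect this to be the main obstacle. First try Riemann--Roch together with $\Cliff(C)=3$. A large drop $r-s\ge3$ means that a general divisor $D_A\in|A|$ imposes at least $3$ independent conditions on $|L|$. Equivalently, $h^0(K+A)-h^0(K)\le 2$, so that $r(K+A)\le r'+2$. Reading this as $r(K+A)-r(K)\le 2$, the $5$ points of $D_A$ fail to impose $3$ independent conditions on $|K+A|$, where $K+A$ is $K$ twisted by the pencil. By the pencil-trick inequality of Step 1 applied to $M:=K+A-B$, this forces $r(K-B-A)\ge r'-2$. Hence the residual $K-B$ has drop at most $2$, which is what is needed to apply Step 1 to it.

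To make this rigorous, the second tool is the scroll picture. The canonical curve lies on the $4$-dimensional rational normal scroll swept out by the spans $\langle D_A\rangle\simeq\P^3$. Dimensions of $|L|$ and $|K|$ are then read from how many of these $\P^3$'s a subspace $\P(H^0(K)^\vee)$-section contains. If the direct inequality route has gaps, the fallback is Park's argument: a dimension count on the scroll combined with $\Cliff(L),\Cliff(K)\ge 3$.

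\emph{Step 3: conclusion and bookkeeping.} Once $K-B$ falls into case (i) or (ii), it is of type (1), (2) or (3). The statement for $L$ is then (4), (5) or (6) respectively. It remains to check that the extra conditions transfer correctly. In (5) the condition is $\dim\bigl((r'-2)A+E\bigr)=r'-2$, which comes from the iteration in case (i). In (6) the nontriviality condition uses $r'\ge1$ and the fact that $B$ is exactly the base locus of $|K|$.
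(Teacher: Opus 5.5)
The paper does not prove this lemma; it quotes it from Park. So I judge your argument on its own.

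\textbf{Step 2 is fine, and simpler than you think.} Your skeleton is right, and Step 2 is already complete without the scroll. Riemann--Roch gives $h^0(L)-h^0(L-A)=5-\bigl(h^0(K+A)-h^0(K)\bigr)$, so a drop $\ge 3$ means $h^0(K+A)-h^0(K)\le 2$. Since $h^0(K-B)=h^0(K)$ and $h^0(K+A-B)\le h^0(K+A)$, your pencil-trick inequality applied to $K-B$ gives $h^0(K-B)-h^0(K-B-A)\le 2$. Record also that $K-B$ satisfies the hypotheses: $r(K-B)=r'\ge 1$ and $h^1(K-B)=h^0(L+B)\ge 2$. For (6) you also need $h^1(K-B-A)\ge 2$.

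\textbf{Drop the Clifford-index claim.} The argument never needs the Clifford index, which is fortunate, because your ``standing fact'' is false. A smooth plane sextic is $5$-gonal with $\Cliff(C)=2$ and Clifford dimension $2$. The paper applies the lemma to exactly such curves (Case E in the proof of Theorem \ref{thm2}). Remove that claim and the scroll fallback entirely.

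\textbf{The real gap is in Step 1.} This is where you classify a base point free $N$ (either $L$ or $K-B$) whose drop $h^0(N)-h^0(N-A)$ is $1$ or $2$.

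\emph{Case (i), drop $1$.} Your inequality says exactly that $\delta_j(N):=h^0(N-jA)-h^0(N-(j+1)A)$ is non-increasing in $j$. So the drop can never grow later, and the ``lift back'' scenario is vacuous. Drop $1$ forces $h^0(N-jA)=r+1-j$ for $0\le j\le r+1$. Hence $N=rA+F$, and $F$ lies in the base locus of $|N|$ because $h^0(rA)\ge r+1$. So $F=0$, and you are always in type (1). Your alternative ``$E$ a pencil of the form required in (2)'' never occurs: type (2) has $\dim(N-A)=r-2$, i.e.\ drop $2$. For the same reason, the condition in (5) comes from (ii), not from (i) as Step 3 says.

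\emph{Case (ii), drop $2$.} This is the serious part. The statement's type (3) requires $N-A$ \emph{itself} to be base point free. You only require its base point free part to have $h^0\ge 2$ and $h^1\ge 2$, which is automatic once $r\ge 3$. So the case $r\ge 3$, drop $2$, with $N-A$ having base points is filed under (3) when it does not belong there. ``A degree count puts us in (2)'' is not an argument.

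\emph{The missing step} is one more application of the pencil trick:
\begin{itemize}
\item Write $N-A=M_0+F$, with $F>0$ the base divisor and $h^0(M_0)=r-1$.
\item Since $N$ is base point free, $|M_0+A|+F$ is a proper subsystem of $|N|$, so $h^0(M_0+A)\le r$.
\item The pencil trick gives $h^0(M_0+A)\ge 2(r-1)-h^0(M_0-A)$, hence $h^0(M_0-A)\ge r-2$. So $M_0$ has drop $1$.
\item By the argument for (i), $M_0=(r-2)A$. Therefore $N=(r-1)A+F$ with $\dim\bigl((r-2)A+F\bigr)=r-2$, which is type (2).
\end{itemize}

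With these two repairs the proof is complete. It uses only that $A$ is a base point free pencil of degree $5$.
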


\subsection{Prym-canonical Clifford dimension of bielliptic curves}
In our classification we will often apply the results by Coppens-Martens on linear series computing the Clifford index of a curve which is neither hyperelliptic nor bielliptic (cf. Section \ref{preliminary}). Hyperelliptic and bielliptic curves thus require a separate treatment. We postpone to the next section the case of hyperelliptic curves and we now concentrate on bielliptic curves.

We recall the following result on the Brill-Noether theory of bielliptic curves due to Coppens-Keem-Martens (cf. \cite{SP01} for a nice proof of the existence of a base point free $g^1_{g-1}$):
\begin{corollary}\cite[Cor. 2.2.2]{ckm}\label{bielliptic}Let $C$ be a curve of genus $g$ admitting a degree $2$ morphism $f \colon C \rightarrow E$ to an elliptic curve $E$. If $d\le g-1$, any base point free $g^r_d$ on $C$ is the pullback under $f$ of a $g^r_{d/2}$ on $E$, unless $d= g-1$ and $r=1$.
\end{corollary}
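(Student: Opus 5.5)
The plan is to prove this with the Castelnuovo--Severi inequality, first for pencils and then for higher-dimensional series by means of sub-pencils. Write $\sigma$ for the bielliptic involution, so that $f\circ\sigma=f$.

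\emph{Step 1: pencils.} Let $|V|$ be a base point free $g^1_d$ on $C$ with $d\le g-1$, and let $\psi\colon C\to\P^1$ be the morphism it defines. Suppose $\psi$ does not factor through $f$. Since $\deg f=2$ is prime, $(f,\psi)\colon C\to E\times\P^1$ is then birational onto its image. The Castelnuovo--Severi inequality gives
\[
g\le 2\cdot 1+d\cdot 0+(2-1)(d-1)=d+1.
\]
Hence $\psi$ factors through $f$ whenever $d\le g-2$. In that case $V=f^*W$ for a base point free $g^1_{d/2}$ on $E$, and in particular $d$ is even. When $d=g-1$ the bound is an equality, which is exactly the exceptional case $r=1$ in the statement.

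\emph{Step 2: $r\ge 2$.} Let $\phi\colon C\to\P^r$ be defined by the base point free $g^r_d$. I would show that $\phi(p)=\phi(\sigma(p))$ for every $p$. Fix a general point $x\in C$ and project from $\phi(x)$, then remove base points. This gives a sub-pencil (for $r=2$) or a sub-series whose general sub-pencils have moving part $M$ of degree at most $d-1\le g-2$. By Step 1, $M$ is pulled back from $E$. Hence any two points lying in a common divisor of the sub-pencil, but not among its base points, are either $\sigma$-conjugate or sit in the pullback of a common divisor on $E$.

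Now take $p$ general and choose the codimension-$2$ centre through $\phi(x)$ and $\phi(p)$. The hyperplanes containing it form a pencil whose member through $p$, after removing the fixed part, must be $\sigma$-stable. This forces $\phi(\sigma(p))$ to lie in every hyperplane through that centre, and so $\phi(\sigma(p))=\phi(p)$ for general $p$. Closedness then gives it for all $p$.

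The delicate point is controlling base points: if $\sigma(p)$ became a base point of the projected series, one would pass to a smaller sub-series and argue again. This step, and in particular the borderline case $d=g-1$ with $r\ge 2$, is where I expect the main obstacle. The degree drops from projecting through an extra point of $C$ are what push us below the Castelnuovo--Severi threshold $g-2$.

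\emph{Step 3: identifying the series.} Since $\phi=\bar\phi\circ f$ for a morphism $\bar\phi\colon E\to\P^r$, we get $L=f^*N$ with $N=\bar\phi^*\O(1)$ of degree $d/2$, and the $g^r_d$ equals $f^*$ of the corresponding $g^r_{d/2}$ on $E$. To see it is the complete pullback, use $f_*\O_C=\O_E\oplus\delta^{-1}$ with $\deg\delta=g-1$. Then
\[
h^0(f^*N)=h^0(N)+h^0(N\otimes\delta^{-1}),
\]
and $h^0(N\otimes\delta^{-1})=0$ because $d/2<g-1$. Thus the series is $f^*|N|$ restricted to an $(r+1)$-dimensional subspace, that is, the pullback of a $g^r_{d/2}$ on $E$.
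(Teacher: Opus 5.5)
The paper gives no proof of this statement; it quotes it from \cite[Cor.~2.2.2]{ckm}, so your argument has to stand on its own. Step 1 is correct. Castelnuovo--Severi for $f$ and $\psi$ gives $g\le d+1$, and since $\deg f=2$ is prime, a non-birational $(f,\psi)$ forces $\psi$ to factor through $f$. Step 3 is also fine.

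The gap is in Step 2, which carries the whole case $r\ge 2$. Take a codimension-$2$ centre $\Pi$ containing $\phi(x)$ and $\phi(p)$. Then $p$ is a base point of the pencil of hyperplanes through $\Pi$, so there is no distinguished \emph{member through $p$}. Moreover, $\sigma$-invariance of the moving part does not push $\sigma(p)$ into the fixed part. If $\phi(\sigma(p))\notin\Pi$, you only learn that the unique member through $\sigma(p)$ has $p$ in its moving part. That is, it meets $C$ at $p$ with higher multiplicity than the general member, which is a tangency condition and not a contradiction by itself. So the sentence claiming that $\phi(\sigma(p))$ must lie in every hyperplane through the centre is unjustified. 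In addition, for $r=2$ a codimension-$2$ centre is a single point and cannot contain both $\phi(x)$ and $\phi(p)$, so the step does not exist in that case. As written, Step 2 does not prove $\phi\circ\sigma=\phi$, and you flag this yourself.

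The fix needs no second projection; use the first one directly.
\begin{enumerate}
\item[(i)] For general $x$, the moving part of $V(-x)$ is a base point free $g^{r-1}_{d'}$ with $d'\le d-1\le g-2$ and $r-1\ge 1$.
\item[(ii)] Every divisor of a base point free series of dimension $\ge 2$ lies in a base point free sub-pencil: join it to a general divisor avoiding its support. So Step 1 shows that all divisors of this moving part are $\sigma$-invariant.
\item[(iii)] Hence its morphism factors through $f$. This morphism agrees with the projection from $\phi(x)$ composed with $\phi$ outside the finite set $\phi^{-1}(\phi(x))$. Therefore, for general $p$, the points $\phi(x)$, $\phi(p)$, $\phi(\sigma(p))$ are collinear.
\item[(iv)] Suppose $\phi(p)\neq\phi(\sigma(p))$ for general $p$. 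Fix such a $p$ and let $x$ vary. Then $\phi(C)$ lies on the line through $\phi(p)$ and $\phi(\sigma(p))$, contradicting nondegeneracy since $r\ge 2$.
\item[(v)] So $\phi\circ\sigma=\phi$ on a dense open set, hence everywhere.
\end{enumerate}
There is no base-point issue in this version. The borderline case $d=g-1$, $r\ge 2$ is covered automatically, because one projection already brings the degree down to at most $g-2$.
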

We are now ready to prove the following: 
\begin{theorem}\label{elliptic}
If $C$ is a bielliptic curve, then $\dim \Cliff_{\eta}(C)=(0,0)$.
\end{theorem}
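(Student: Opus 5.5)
We argue by contradiction: assume $\dim \Cliff_{\eta}(C)\neq(0,0)$ and let $L$ compute the Prym-canonical Clifford dimension, with $\deg L=d\le g-1$. By Remark \ref{compute} we have $0\le r(L)\le r(L\otimes\eta)$. Proposition \ref{lemma1} excludes $r(L)=0$, so $r(L)\ge 1$. Then Proposition \ref{lemma4} gives $(r(L),r(L\otimes\eta))\ge(1,2)$ lexicographically, so $r(L\otimes\eta)\ge 2$ and hence $r(L\otimes\eta)\ge r(L)\ge 1$ with not both equal to $1$. By Lemma \ref{lemma3}, both $L$ and $L\otimes\eta$ are base point free.

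We apply Corollary \ref{bielliptic} to $L\otimes\eta$. It is a base point free $g^{r'}_d$ with $r'\ge 2$ and $d\le g-1$, so $L\otimes\eta\simeq f^*M'$ for some $M'\in\Pic^{d/2}(E)$ with $h^0(M')=r'+1$. The same corollary applies to $L$ unless $d=g-1$ and $r(L)=1$; set that case aside for the moment. Then $L\simeq f^*M$ for some $M\in\Pic^{d/2}(E)$, and therefore
\[\eta\simeq f^*(M'\otimes M^{\vee}).\]
On the elliptic curve $E$ we have $h^0(M)=h^0(M')=d/2$ (since $d/2\ge r'+1\ge 3>0$). Hence $r(L)=r(L\otimes\eta)=d/2-1$, and
\[\Cliff_\eta(L)=d-d+1=1.\]
Example \ref{ex2} already gives $\Cliff_\eta(C)\le 1$, with a line bundle $\O_C(p+q)$ of type $(0,0)$ whenever $\eta$ has the form $\O_C(p+q-x-y)$ with $2(p+q)\sim2(x+y)$. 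Here $\eta=f^*\epsilon$ with $\epsilon=M'\otimes M^\vee$ of degree $0$. If $\epsilon$ is nontrivial $2$-torsion, Example \ref{biellipticex} applies and produces an $L$ of type $(0,0)$ computing $\Cliff_\eta(C)=1$, contradicting minimality of $(r(L),r(L\otimes\eta))$. If $\epsilon$ is not $2$-torsion, we argue directly: write $\epsilon=\O_E(x-y)$, so $\eta=\O_C(x_1+x_2-y_1-y_2)$. Then $L_0=\O_C(x_1+x_2)$ satisfies $h^0(L_0)\ge1$ and $L_0\otimes\eta\simeq\O_C(2x_1+2x_2-y_1-y_2)$. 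The latter is $f^*\O_E(2x-y)$, which has degree $2$ and is effective. So $\Cliff_\eta(L_0)\le 1$, again with small $(r,r')$. Whether $(r(L_0),r(L_0\otimes\eta))=(0,0)$ needs a check that $h^0=1$ for these bundles, which holds because $C$ is not hyperelliptic in the relevant range (for bielliptic $C$ of genus $\ge 4$). The case $\Cliff_\eta(C)=0$ is only possible for hyperelliptic $C$ and is handled by Theorem \ref{clifford thm} together with Example \ref{hyperelliptic example}, which gives type $(0,0)$ directly.

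The remaining and hardest case is $d=g-1$ with $r(L)=1$, where $L$ is a base point free pencil not necessarily pulled back from $E$, while $L\otimes\eta=f^*M'$ with $r'=d/2-1\ge 2$. Here
\[\Cliff_\eta(L)=(g-1)-2-(g-1)/2+1=(g-1)/2-1.\]
We compare this with the bound $\Cliff_\eta(C)\le 1$ from the previous paragraph. If $g\ge 7$ this value exceeds $1$, contradicting that $L$ computes the index. The small genera $g\le 6$ then satisfy $r'=(g-1)/2-1\ge2$ only for $g\ge7$, which cannot occur. This case is the main obstacle: the argument depends on establishing $\Cliff_\eta(C)\le1$ for every bielliptic Prym curve. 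That bound is clear when $\eta$ is a pullback, via Example \ref{biellipticex}. For $\eta$ not a pullback, we first try to exhibit, via the same pullback trick applied to $L\otimes\eta=f^*M'$, a bundle of type $(0,0)$ with Prym-Clifford index at most that of $L$. The idea is to subtract from $L$ a fibre $f^*(z)$ lying in $|L\otimes\eta|$ and use Lemma \ref{lemma6} to show that $L(-f^*z)$ keeps sections and does not increase $\Cliff_\eta$, iterating down to type $(0,0)$.
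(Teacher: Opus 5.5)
Your treatment of the case where Corollary \ref{bielliptic} applies to both $L$ and $L\otimes\eta$ is fine. The paper subtracts a common divisor on $E$ to reach type $(0,0)$; invoking Example \ref{biellipticex} is equivalent. Your branch with $\epsilon$ not $2$-torsion is vacuous, since $f^*\colon\Pic^0(E)\to\Pic^0(C)$ is injective for a ramified double cover.

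\textbf{The gap.} It is in the remaining case $d=g-1$, $r(L)=1$, $L\otimes\eta\simeq f^*M'$. You compare $\Cliff_\eta(L)=(g-3)/2$ with the bound $\Cliff_\eta(C)\le 1$, but you only have that bound when $\eta$ is a pullback. In this case $\eta$ is never a pullback: if $\eta\simeq f^*\epsilon$, then $L\simeq f^*(M'\otimes\epsilon)$ would have $h^0(L)\ge (g-1)/2\ge 3$.

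For non-pullback $\eta$ the bound is false in general. By Corollary \ref{veryampleness}, $\Cliff_\eta(C)\le 1$ means that $\omega_C\otimes\eta$ is not very ample. By Lemma \ref{first lemma}(ii) this forces $\eta\simeq\O_C(p+q-x-y)$ with $2(p+q)$ and $2(x+y)$ in a $g^1_4$. For $g\ge 6$ every $g^1_4$ on $C$ is pulled back from $E$, so $p,q$ (and likewise $x,y$) must be ramification points of $f$ or a conjugate pair. Only a small subset of the $2^{2g}-1$ nontrivial $\eta$ qualify.

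Your proposed repair points the wrong way. Since $\phi_{L\otimes\eta}$ factors through $f$, every fibre $f^*z$ lies in $V^1_2(L\otimes\eta)$. Lemma \ref{lemma6} (with $e=2$ and $f=f'=1$ there) then says exactly that $f^*z\notin V^1_2(L)$, i.e.\ $h^0(L(-f^*z))=0$ for every $z\in E$. A degree count does not rescue it either. The locus $\{x:\phi_L(x)=\phi_L(\iota x)\}$ is a divisor of degree $2g-2$, and it already contains the $2g-2$ ramification points of $f$. So nothing forces a fibre of $\phi_L$ to contain a conjugate pair.

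\textbf{The missing idea.} Use $\eta^{\otimes 2}\simeq\O_C$, which gives $L^{\otimes 2}\simeq(L\otimes\eta)^{\otimes 2}\simeq f^*(M'^{\otimes 2})$.
\begin{itemize}
\item For every $D\in|L|$, the divisor $2D$ lies in $|f^*(M'^{\otimes 2})|$, whose members are $\iota$-invariant.
\item Hence $2D$, and therefore $D$, is $\iota$-invariant for every $D$ in the base point free pencil $|L|$.
\item A general such $D$ avoids the ramification points, so $D=f^*D'$ and $L$ is a pullback.
\item This contradicts $h^0(L)=2<(g-1)/2$.
\end{itemize}
This is how the paper closes the case.

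When you write it out, check the identity $|f^*(M'^{\otimes 2})|=f^*|M'^{\otimes 2}|$. Write $f_*\O_C=\O_E\oplus\delta^{-1}$ with $\deg\delta=g-1$. The projection formula gives the identity only when $M'^{\otimes 2}\not\simeq\delta$. Otherwise $L^{\otimes 2}\simeq\omega_C$, and this theta-characteristic subcase needs a separate argument; the paper does not single it out.
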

\begin{proof}
      We denote by $\pi \colon C \rightarrow E$ the degree $2$ map to an elliptic curve $E$.\\ Assume that $\dim \Cliff_{\eta}(C)\neq(0,0)$ and let $L \in \Pic^d(C)$ be a line bundle of degree $d$ that computes the Prym-canonical Clifford dimension of $C$. Lemma \ref{lemma3} implies that both $L$ and $L\otimes \eta$ are base point free and, if either $d\le g-2$, or $d=g-1$ and $r(L)\ge 2$, Corollary \ref{bielliptic} yields $L=\pi^*l$ and $L \otimes \eta= \pi^*l'$, where $l,l'$ are complete $g^{f-1}_f$ on $E$ with $f=\frac{d}{2}$. Hence, $\dim \Cliff_{\eta}(C)=(f-1,f-1)$ and $\Cliff_{\eta}(C)=\Cliff_{\eta}(L)=2f-f-f+1=1$. As far as $f\ge2$, one can consider an effective divisor $D_k$ of degree $ k < f$ on $E$, and take $m=l(-D_k)$, $m'=l'(-D_k)$, so that $m$ and $m'$ are complete $g^{f-k-1}_{f-k}$ on $E$. We get $\eta=\pi^*(m-m')$ and $\Cliff_{\eta}(\pi^*m)=1=\Cliff_{\eta}(C)$, in contradiction with the assumption that $L$ computes the Prym-canonical Clifford dimension.
     We conclude that $d=g-1$ and $r(L)=1$; by Proposition \ref{lemma4}, $r(L\otimes \eta)\geq 2$, so that $L \otimes \eta= \pi^*l$, where $l$ is a complete $g^{f-1}_{f}$ on $E$ with $f=(g-1)/2\geq 3$. However, the equality $L^{\otimes 2}= \pi^*l^{\otimes 2}$ yields a contradiction, as well. Indeed, it implies that for every divisor $D=p_1+\cdots+p_{g-1}\in |L|$ the divisor $2D$ is invariant under the covering involution $\iota$. Since $L$ is not a pullback, then $D$ is not invariant under $\iota$  and thus at least one of the points $p_i$ is a ramification point of $\pi$; moving $D\in |L|$, we get infinitely many ramification points which is clearly a contradiction.
\end{proof}

\subsection{Classification theorems}
We are now ready to classify Prym-canonical curves having Prym-canonical Clifford index equal to $1$.

\begin{proof}[Proof of Theorem \ref{thm1}]
If $\omega_C\otimes \eta$ is not very ample, then $\Cliff_{\eta}(C)=1$ by Example \ref{ex2}.

Viceversa, assume $\Cliff_{\eta}(C)=1$ and let $L$ be a line bundle computing the Prym-canonical Clifford dimension of $(C,\eta)$ with $\deg(L)\le g-1$. 

If $\dim \Cliff_{\eta}(C)=(0,0)$, that is, $r(L)=r(L\otimes \eta)=0$, then $\deg(L)=2$. Denoting by $D$ the only effective divisor in $|L|$, the Riemann-Roch Theorem yields \[
h^0(\omega_C \otimes \eta (-D))= h^0(L \otimes \eta)-2+g-1= g-2,
\] that is, $\varphi_{\omega_C\otimes\eta}$ identifies the points in $D$. In particular, $\omega_C\otimes \eta$ is not very ample and the statement follows from Lemma \ref{first lemma}.

Therefore, we want to rule out the cases with $r(L\otimes \eta)\ge r(L)\ge 1$, where Lemma \ref{lemma2} yields \[\Cliff(L)+\Cliff(L\otimes \eta)=4,\]
and thus $0\le \Cliff(L\otimes \eta)\le 2$ as $\Cliff(L\otimes \eta)\le \Cliff(L)$. In particular, one has $0\le \Cliff(C)\le 2$ and, recalling that on a hyperelliptic curve of genus $g \ge 2$ any special linear series $g^r_d$ is a sum of $r$ copies of the $g^1_2$ plus $d-2r$ base points  (\cite[p. 41]{acgh}), the case $\Cliff(C)= 0$ can be excluded because it would force $L$ to have base points, in contradiction with Lemma \ref{lemma3}. Therefore, from now on we will assume $1\le\Cliff(C)\le 2$. Also note that, by Theorem \ref{elliptic}, the inequality $r(L\otimes\eta)\geq 1$ implies that $C$ is not bielliptic.\\\vspace{0.2cm}

\noindent\underline{CASE A: $\Cliff(C)=\Cliff(L\otimes \eta)=1$ and $\Cliff(L)=3$.} 

By Remark \ref{cliffbassi}, $L\otimes\eta$ is either a $g^1_3$, or a very ample $g^2_5$. The former case can be excluded because the inequalities $h^0(L)\ge 2$ and $h^0(L \otimes \eta) \ge 2$ imply  \begin{equation} \label{dis grado}
1=\Cliff_{\eta}(L)= \deg(L)-h^0(L)-h^0(L \otimes \eta)+1 \le \deg(L) -3.
\end{equation}
If $L\otimes \eta$ is a very ample $g^2_5$, then $C$ has genus $6$ and the equality $\Cliff_{\eta}(L)=1$ reads like $2=h^0(L)$ and so $h^0(\omega_C\otimes L^\vee)=2$ by Riemann-Roch. Take $D=p_1+\ldots+ p_5\in |\omega_C\otimes L^\vee|$; since $h^0(\omega_C(-D))=2$, the points $p_1, \ldots, p_5$ impose one condition less than expected to the canonical linear system $|\omega_C|$. Recalling that $|\omega_C|$ is cut out by plane conics, there is a one-dimensional family $\mathcal{C}=|I_{p_1+\cdots +p_5/\P^2}(2)|$ of conics passing through the five points $p_1, \ldots, p_5$ and any $Q \in \mathcal{C}$ is the union of two lines $r$ and $r'$. Since $\mathcal C$ is $1$-dimensional, one of the two lines (let us say $r'$) moves and thus contains at most one of the $p_i$. We first exclude the fact that four points, for example $p_1, \ldots, p_4$, lie in $r$ and $p_5 \in r'$ by showing that in this case $|L|$ would have a base point in contradiction with Lemma \ref{lemma3}. Indeed, any conic $Q\in\mathcal{C}$ would be the union of a fixed $r$ and a varying $r'$ through $p_5$. Any such $Q$ would cut out on the curve $5$ more points $q_1, \ldots, q_5$ and, up to reordering, we have that $q_1$ is the other intersection point of $r$ with $C$ and is thus fixed, while $q_2, \ldots, q_5 \in r'$ vary while varying $Q\in \mathcal C$ and thus define a $g^1_4$. Hence, the linear system $|L|=|\omega_C-p_1-\ldots-p_5|$ would have a base point at $q_1$. It remains to treat the case where $p_1 \ldots, p_5$ lie on the same line, but this would bring to the contradiction $L\simeq\O_{C}(1)\simeq L\otimes \eta$.\\\vspace{0.2cm}

\noindent\underline{CASE B: $\Cliff(C)=1$ and $\Cliff(L)=\Cliff(L \otimes \eta)=2$.} 

We first remark that $C$ cannot be a smooth plane quintic, because otherwise it would have genus $g=6$ and so any linear series of degree $\le g-1$ with Clifford index $2$ would be a $g^1_4$; however, it is not possible that both $L$ and $L \otimes \eta$ are $g^1_4$ by Lemma \ref{lemma4}. Therefore, the curve $C$ must be trigonal. By Corollary \ref{trig}, any line bundle of Clifford index $2$ on a trigonal curve is either a $g^1_4$, or a $g^2_6$. We exclude the former case as it would imply $\dim \Cliff_{\eta}(C)=(1,1)$, in contradiction with Proposition \ref{lemma4}. In the latter case, Corollary \ref{trig} implies that the only $g^2_6$ on $C$ are the linear series $2g^1_3$ and, if $g=7$, the residual to $2g^1_3$. Since $L$ and $L \otimes \eta$ are distinct as $\eta$ is non-trivial, we conclude that $g=7$ and $L = 2g^1_3$ and $L \otimes \eta= \omega_C-2g^1_3$, or viceversa. However, this yields the contradiction $\omega_C\otimes \eta=4g^1_3=\omega_C$, where the second equality again follows from Corollary \ref{trig} and the fact that the Maroni invariant $m$ of a trigonal genus $7$ curve is $1$ by \eqref{marin}.\\\vspace{0.2cm}

\noindent\underline{CASE C: $\Cliff(C)=\Cliff(L\otimes \eta)=\Cliff(L)=2$.}

The line bundles $L$ and $L\otimes \eta$ define either two $g^1_4$, or two $g^2_6$, or two $g^3_8$ and the genus is $9$. The former case cannot occur by Proposition \ref{lemma4}. In the latter case both the $g^3_8$ are simple (that is, they define a birational morphism onto the image) as $C$ is neither bielliptic nor hyperelliptic; by \cite[Thm 2.3]{lm}, we reach the contradiction $L\simeq L\otimes \eta$. Hence, $L$ and $L\otimes \eta$ are $g^2_6$ and they are simple (as $C$ is not hyperelliptic, bielliptic or trigonal) but not very ample because of the unicity of a $g^2_d$ on a smooth plane curve of degree $d \ge 4$ (\cite[p. 56]{acgh}). 
Hence, we can assume that $L\otimes \eta$ is a $g^2_6$ defining a morphism $f \colon C \rightarrow \bar{C}\subseteq \P^2$ that is generically injective but not embedding. Since plane sextics have arithmetic genus $10$ and $g\geq d+1=7$, the $\delta$-invariant of $\bar{C}$ satisfies $1\le\delta(\bar{C})\le 3$. We denote by $A_{\bar C}$ the conductor ideal of the normalization $\nu:C\rightarrow \bar{C}$ and by $E_{\bar C}\subset \bar{C}$ the conductor subscheme, which is defined by $A_{\bar C}$ and has length equal to $\delta(\bar C)$. Since $A_{\bar C}$ is also an ideal sheaf of $\nu_*\mathcal O_C$, there exists an effective divisor $\Delta_C$ on $C$ such that $\deg\,\Delta_C=2\delta(C)$ and $A_{\bar C}=\nu_*\mathcal O_C(-\Delta_C)$. The isomorphism 
\begin{equation}\label{can}\omega_C\simeq \nu^*\omega_{\bar C}\otimes \mathcal O_C(-\Delta_C),\end{equation} along with the fact that the dualizing sheaf $\omega_{\bar C}$ is cut out by cubics by adjunction, yields that the sections of $\omega_{ C}$ are cut out by cubics through $E_{\bar C}$. We now fix a general element $D=p_1+\ldots+p_6 \in |L|$; Riemann-Roch Theorem yields $h^0(\omega_C-p_1-\cdots-p_6)=6-\delta(\bar C)$. We set $\bar D:=E_{\bar C}+p_1+ \ldots+p_6$, which is a divisor on $\bar C$ consisting of $6+\delta(\bar C)$ possibly infinitely near points of $\mathbb P^2$. Then, there exists a family $\mathcal{D}_{6+\delta(\bar C)}$ of plane cubics passing through $\bar D$ of dimension 
\begin{equation}\label{dimDi}
\dim\,\mathcal{D}_{6+\delta(\bar C)}=5-\delta(\bar C).
\end{equation}This excludes the case $\delta(\bar C)=3$ because there exists at most a pencil of plane cubics through a $0$-dimensional subscheme $\xi\subset \mathbb P^2$ of length $9$. Indeed, let $X_1$, $X_2$ be plane cubics through such a $\xi $. The short exact sequence \[
0 \rightarrow \mathcal{I}_{X_1/\P^2}(3)\rightarrow \mathcal{I}_{\xi/\P^2}(3)\rightarrow \mathcal{I}_{\xi/X_1}(3)\rightarrow0
\] reads like \[
0 \rightarrow \O_{\P^2} \rightarrow \mathcal{I}_{\xi/\P^2}(3) \rightarrow \O_{X_1} \rightarrow 0,
\] hence $h^0(\mathcal{I}_{\xi/\P^2}(3))=2$.

Therefore, we must have $1\le \delta(\bar C)\le 2$ and, by \eqref{dimDi} and \cite[Cor. 4.4, ch. V]{hartshorne}, the divisor $\bar D=E_{\bar C}+p_1+ \ldots+p_6$ splits as a sum $\bar D=\bar{D}_1+\bar{D}_2$ of two generalized effective divisors $\bar{D}_1,\bar{D}_2\subset \bar C$ such that either $\bar D_1$ is contained in a line and has degree $\ge 4$, or $\bar D_2$ lies on a conic $Q$ and has degree $\ge 7$. The latter case can be excluded because such a conic $Q$ would be fixed and thus the family $\mathcal{D}_{6+\delta(\bar C)}$ would contain only reducible curves of the form $Q\cap l_t$ where $l_t$ is a line; in particular, $\mathcal{D}_{6+\delta(\bar C)}$ would have dimension $\le 2$ in contradiction with \eqref{dimDi}. Hence, $\bar D_1$ is contained in a fixed line $l$ and has degree $4\le d_1\le 6$ (where the right hand inequality follows from the fact that $\bar C$ is a sextic), and every cubic in the family $\mathcal{D}_{6+\delta(\bar C)}$ is the union of $l$ with a conic $Q_t\in |I_{\bar D_2/\mathbb P^2}(2)|$. Note  that $\bar D_2$ has degree \begin{equation}\label{d2}\delta(\bar C)\le d_2\le 2+\delta(\bar C)\end{equation} and \eqref{dimDi} can be rewritten as \begin{equation}\label{dimD1}\dim |I_{\bar D_2/\mathbb P^2}(2)|=5-\delta(\bar C).\end{equation} 

If $\delta(\bar C)=1$, equations \eqref{d2} and \eqref{dimD1} yield $d_2=1$, $d_1=6$ and $D_2=E_{\bar C}$ because $\bar C$ is singular at $E_{\bar C}$ and otherwise the intersection between $\bar C$ and $l$ would be $\ge 7$, which is absurd. Analogously, if $\delta(\bar C)=2$, equations \eqref{d2} and \eqref{dimD1} yield $d_2=2$, $d_1=6$ and thus again $D_2=E_{\bar C}$. It then follows that $\omega_C( - D) \simeq \nu^*\mathcal O_{\bar C} (2)\otimes \mathcal O_C(-\Delta_C)$, that together with \eqref{can} yields $L\simeq \mathcal O_C(D) \simeq \O_C(1) \simeq L \otimes \eta$, yielding a contradiction.

\end{proof}

\begin{example}\label{dom}
Equality $\Cliff_\eta(C)=1$ occurs for instance when $C$ is hyperelliptic and $\eta=\O_C(w_1+w_2-w_3-w_4)$ with $w_1,\ldots,w_4$ Weierstrass points (so that $2(w_1+w_2)\sim 2(w_3+w_4)$).
\end{example}

\begin{remark}
A smooth nonhyperelliptic Prym curve $(C, \eta)$ of genus $g=3$ such that $\omega_C \otimes \eta$ is base point free has Prym-canonical Clifford index $\Cliff_{\eta}(C)=1$. Indeed, the Prym-canonical system $\omega_C \otimes \eta \sim g^1_4$ is not very ample.
\end{remark}
As a consequence of Theorems \ref{clifford thm} and \ref{thm1}, one gets the following: 
\begin{corollary} \label{veryampleness}

Let $(C, \eta)$ be a Prym curve of genus $g \ge 3$. Then, $\omega_C \otimes \eta$ is very ample if and only if $\Cliff_{\eta}(C)\ge 2$.

\end{corollary}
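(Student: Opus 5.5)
The statement is a direct combination of Lemma \ref{first lemma}, Theorem \ref{clifford thm} and Theorem \ref{thm1}. We first observe that $\Cliff_\eta(C)$ is a nonnegative integer by Prym--Clifford's theorem. Hence $\Cliff_\eta(C)\ge 2$ holds exactly when $\Cliff_\eta(C)\neq 0$ and $\Cliff_\eta(C)\neq 1$, and we will translate each of these two exclusions into a geometric condition on $|\omega_C\otimes\eta|$.

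For the implication ``$\omega_C\otimes\eta$ very ample $\Rightarrow \Cliff_\eta(C)\ge 2$'', we assume $\omega_C\otimes\eta$ very ample. In particular it is base point free, so Theorem \ref{clifford thm} gives $\Cliff_\eta(C)\neq 0$. Theorem \ref{thm1} characterizes $\Cliff_\eta(C)=1$ as $\omega_C\otimes\eta$ being base point free but not very ample, so $\Cliff_\eta(C)\neq 1$ as well, and we conclude $\Cliff_\eta(C)\ge 2$.

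For the converse, we assume $\Cliff_\eta(C)\ge 2$ and argue by contradiction, supposing $\omega_C\otimes\eta$ is not very ample. There are two cases. If $\omega_C\otimes\eta$ has a base point, then Lemma \ref{first lemma}(i) shows that $C$ is hyperelliptic with $\eta=\O_C(p-q)$ for Weierstrass points $p,q$, and Example \ref{hyperelliptic example} (equivalently Theorem \ref{clifford thm}) gives $\Cliff_\eta(C)=0$. If instead $\omega_C\otimes\eta$ is base point free, then Theorem \ref{thm1} gives $\Cliff_\eta(C)=1$; directly, Lemma \ref{first lemma}(ii) yields $\eta=\O_C(p+q-x-y)$ with $2(p+q)\sim2(x+y)$, and Example \ref{ex2} exhibits $L=\O_C(p+q)$ with $\Cliff_\eta(L)=1$. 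Either case contradicts $\Cliff_\eta(C)\ge 2$.

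The only point needing care is the genus hypothesis. Lemma \ref{first lemma} and Theorem \ref{thm1} require $g\ge3$, which is exactly the hypothesis of the corollary. In genus $3$ the base point free Prym-canonical system is a $g^1_4$, which is never very ample; this is consistent with the remark preceding the statement, since it gives $\Cliff_\eta(C)=1$ there. No step is a genuine obstacle, as all the work has already been done in the two theorems.
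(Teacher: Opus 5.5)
Your proof is correct and takes the same route as the paper, which records the corollary as an immediate consequence of Theorem \ref{clifford thm} and Theorem \ref{thm1}: nonnegativity of $\Cliff_\eta(C)$, together with the characterizations of $\Cliff_\eta(C)=0$ (base points) and $\Cliff_\eta(C)=1$ (base point free but not very ample). Your side remarks, namely the direct bound $\Cliff_\eta(C)\le 1$ via Lemma \ref{first lemma}(ii) and Example \ref{ex2}, and the fact that both sides of the equivalence fail for $g=3$ (so it holds trivially there), are accurate but not needed.
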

Let us now move on to the proof of the classification theorem for Prym curves with Prym-Clifford dimension $2$.

\begin{proof}[Proof of Theorem \ref{thm2}]
If $D$ is the divisor cut out by a trisecant line, then $h^0(\O_C(D))=1$ and $h^0(\omega_C\otimes \eta (-D))=g-3$, so that $\Cliff_{\eta}(\O_C(D))=2$.

Viceversa, assume $\Cliff_{\eta}(C)=2$ and let $L$ be a line bundle computing the Prym-canonical Clifford dimension of $(C,\eta)$.

If $\dim \Cliff_{\eta}(C)=(0,0)$, that is, $r(L)=r(L\otimes \eta)=0$, then $\deg(L)=3$. Riemann-Roch Theorem then implies that the divisor $D\in |L|$ imposes one condition less than expected to the hyperplane sections of the Prym-canonical curve, or equivalently, $D$ is cut out by a trisecant line to $\varphi_{\omega_C\otimes\eta}(C)$. 

We thus want to rule out the cases $2\le h^0(L)\le h^0(L \otimes \eta)$, where $C$ is not bielliptic by Theorem \ref{elliptic} and we have $$\Cliff(L)+\Cliff(L \otimes \eta)=6,\,\,\,\Cliff(L\otimes\eta)\le \Cliff(L)$$ by Lemma \ref{lemma2}. In particular, we obtain $0\leq\Cliff(C)\leq 3$. When $C$ is hyperelliptic, as in the proof of Theorem \ref{thm1} we conclude that $L $ has some base points, in contradiction with Lemma \ref{lemma3}. Therefore, we can assume $1\le\Cliff(C)\le 3$. 
\\We first consider the cases where $\Cliff(C)=1$ and thus $C$ is either trigonal or a smooth plane quintic.\\\vspace{0.2cm}

\noindent\underline{CASE A: $\Cliff(C)=\Cliff(L\otimes\eta)=1$, and $\Cliff(L)=5$.} 
Since $L\otimes \eta$ is either a $g^1_3$ or a $g^2_5$, the degree of $L$ is either $3$ or $5$ and in both cases the equality $\Cliff(L)=5$ yields a contradiction.\\\vspace{0.2cm}

\noindent\underline{CASE B: $\Cliff(C)=1$, $\Cliff(L \otimes \eta)=2$ and $\Cliff(L)=4$.}  

In this case $C$ cannot be a smooth plane quintic because otherwise its genus would be $6$ in contradiction with the assumptions $d\leq g-1$ and $\Cliff(L)=4$. Hence, $C$ is trigonal. By Corollary \ref{trig}, a $g^r_d$ with Clifford index $2$ satisfies $r \le 2$, and this implies that $L \otimes \eta$ is either a $g^1_4$ or a $g^2_6$. If $L \otimes \eta$ is a $g^1_4$, then $r(L)=0$ and $\dim \Cliff_{\eta}(C)=(0,1)$, in contradiction with Proposition \ref{lemma1}. If $L \otimes \eta$ is a $g^2_6$, then $L$ is a $g^1_6$. Assume $g=7$, so that the Maroni invariant of $C$ is $m=1$ by \eqref{marin} and is thus minimal. By Corollary \ref{trig} we get $L \otimes \eta= \omega_C - 2g^1_3=2g^1_3$ and $\omega_C=4 g^1_3$. Since $L$ is base-point free by Lemma \ref{lemma3}, then $L \in V^1_6$, and so $L=\omega_C-g^1_3-D_3=3g^1_3-D_3$ for some degree $3$ effective divisor $D_3$. This yields that $\eta=g^1_3-D_3$, and so $\Cliff_{\eta}(C)\le \Cliff_{\eta}(g^1_3)=1$, in contradiction with our hypothesis. Therefore, by Corollary \ref{trig} we can assume $g\ge 8$ and $L \otimes \eta= 2g^1_3$. Since $L\in V^1_6\neq \emptyset$, we must have $g \le 10$.  If $g=8$ (respectively, $g=9$), the Maroni invariant is $m=2$ (resp., $m=3$). In both cases \eqref{mardef} yields $e_1=4$ and thus $\omega_C-4g^1_3$ is effective, that is, $\omega_C=4g^1_3\otimes \O_C(D_{2g-14})$ for some effective divisor $D_{2g-14}$ of degree $2g-14$. Since $L \in V^1_6$, there is an effective divisor $E_{10-g}$ of degree $10-g$ such that $L=\omega_C-(g-6)g^1_3-E_{10-g}$. In particular, for $g=8$ one gets that $\eta=L-(L \otimes \eta)=\omega_C-2g^1_3-E_2-2g^1_3= D_2-E_2$, and thus the contradiction $\Cliff_{\eta}(C)\le \Cliff_{\eta}(\O_C(D_2))=1$. If $g=9$, we get $\eta= \omega_C-3g^1_3-E_1-2g^1_3=D_4-(g^1_3+E_1)$ and, having fixed a point $p$ in the support of $D_4$, we can write  $D_4=D_3+p$ and $g^1_3(-p)=\O_C(E_2)$ for some effective divisors $D_3,E_2$ of degree $3$ and $2$, respectively, so that $\eta=\O_C(D_3-E_2-E_1)$. We get $\Cliff_{\eta}(\O_C(D_3))=2=\Cliff(C)$ and thus the contradiction $\dim \Cliff_{\eta}(C)=(0,0)$. To exclude the case $g=10$, it suffices to remark that $L\in V^1_6$ implies $L=\omega_C-4g^1_3=\omega_C\otimes L^{-2}$ so that $L^{\otimes 2}$ is special; this contradicts \cite[Cor. 3]{ms}.\\\vspace{0.2cm}

\noindent\underline{CASE C: $\Cliff(C)=1$ and $\Cliff(L)=\Cliff(L \otimes \eta)=3$}  

Both $L$ and $L\otimes \eta$ are $g^r_{2r+3}$ and we can assume $r\ge 2$, as $r=1$ contradicts Proposition \ref{lemma4}. Because of the inequality $d\leq g-1$, the curve $C$ cannot be a smooth plane quintic and is thus trigonal.
Again by Corollary \ref{trig}, we have $r \le 3$ and so $L$ and $L\otimes \eta$ are either of type $g^2_7$, or of type $g^3_9$ when $g=10$. 
In the former case, since $L$ and $L\otimes \eta$ are base point free by Lemma \ref{lemma3}, they cannot lie in  $U^2_7$. Hence, $V^2_7$ is nonempty (and thus $g \le 9$) and we have $L= \omega_C-(g-6)g^1_3-D_{9-g}$, $L \otimes \eta= \omega_C-(g-6)g^1_3-{D}_{9-g}'$ for some effective divisors $D_{9-g}$ and $D_{9-g}'$ of degree $9-g$. We obtain $\eta=\O_C(D_{9-g}-D_{9-g}')$, which is a contradiction because for $g=9$ it gives $\eta=\mathcal O_C$ and for $g=8$ it implies $\Cliff_{\eta}(C)\leq \Cliff_{\eta}(\mathcal O_C(D_{1}))=0$.
\\ If $L$ and $L \otimes \eta$ are two distinct $g^3_9$, Corollary \ref{trig} implies $g=10$, $L=3g^1_3$, $L \otimes \eta=\omega_C-3g^1_3$ (or viceversa) and the Maroni invariant is not minimal, that is, $m=4$. Hence, $e_1=g-2-m=4$ and $\omega_C(-4g^1_3)$ is effective; more precisely, if $\pi:C\to\mathbb P^1$ is the degree $3$ cover, using \cite[p.172]{ms} one computes $\pi_*\omega_C=\O_{\P^1}(4)\oplus \O_{\P^1}(4)\oplus \O_{\P^1}(-2)$ and thus $h^0(\omega_C(-4g^1_3))=2$, that is, $M_6:=\omega_C(-4g^1_3)$ is a $g^1_6$. We have $\eta=\omega_C-6g^1_3=M_6-2g^1_3$ and $\Cliff_{\eta}(M_6)=2=\Cliff_\eta(C)$, in contradiction with the assumption that $L$ computes the Prym-canonical Clifford dimension of $(C,\eta)$.\\\vspace{0.1cm}

We will now treat the cases where $\Cliff(C)=2$, that is, $C$ is either tetragonal or a smooth plane sextic (cf. Remark \ref{cliffbassi}).\\\vspace{0.2cm}

\noindent\underline{CASE D: $\Cliff(C)=\Cliff(L \otimes \eta)=2$ and $\Cliff(L)=4$.}

 Then $L\otimes \eta$ is either a $g^1_4$, or a $g^2_6$, or a $g^3_8$ and $g=9$. We exclude the case $L\otimes \eta= g^1_4$, as it would force $r(L)=0$, in contradiction with Proposition \ref{lemma1}. If $L \otimes \eta= g^2_6$, then $L= g^1_6$.

We may assume that $L \otimes \eta$ is simple (as $C$ is neither trigonal nor bielliptic) and thus defines a birational morphism $f \colon C \rightarrow \bar{C}\subseteq \P^2$. Since smooth plane sextics have arithmetic genus $10$ and $g \ge d+1=7$, the $\delta-$invariant of $\bar{C}$ satisfies $0 \le \delta(\bar{C})\le 3$. Using the same notation as in the proof of Theorem \ref{thm1}, we recall the isomorphism \eqref{can} implying that sections of $\omega_C$ are cut out by cubics through the conductor subscheme $E_{\bar C}$, which has length $\delta(\bar{C})$.
Given a general divisor $D=p_1+\cdots+p_6 \in |L|$, Riemann-Roch Theorem yields that $h^0(\omega_C(-D))=5-\delta(\bar{C})$, that is, the divisor $\bar{D}:= E_{\bar C}+D$ (consisting of $6+\delta(\bar{C})$ possibly infinitely near points of $\P^2$) imposes one condition less than expected to $|\O_{\P^2}(3)|$, and there is a family $\mathcal{D}_{6+\delta({\bar{C}})}$ of plane cubics passing through $\bar{D}$ of dimension \begin{equation}\label{dimD}
    \dim \mathcal{D}_{6+\delta({\bar{C}})}= 4- \delta(\bar{C}).
\end{equation}
We firstly consider the cases where $0\leq \delta(\bar{C}) \leq 2$. By Corollary \cite[Cor. 4.4, ch. V]{hartshorne}, the divisor $\bar{D}=E_{\bar C}+D$ splits as a sum $\bar{D}=\bar{D_1}+\bar{D_2}$ of two generalized effective divisors $\bar{D_1},\bar{D_2} \subset \bar{C}$ such that either $\bar{D_1}$ is contained in a line and has degree $\ge 4$, or $\bar{D_2}$ lies on a conic $Q$ and has degree $\ge 7$.\\ 
If $\bar{D_2}$ has degree $\ge 7$ and lies on a conic, then such a conic $Q$ would be fixed and thus the family $\mathcal{D}_{6+\delta(\bar C)}$ would contain only reducible curves of the form $Q\cup l_t$ where $l_t\in |\O_{\mathbb P^2}(1)|$ is a line, so that \eqref{dimD} yields $\delta(\bar C)=2$ and $\bar D=\bar D_2$. We can write $\omega_C\otimes L^\vee=\omega_C(-D)=\nu^*(\O_{\bar C}(1)+q_1+q_2)$, where $q_1,q_2$ are the two further intersection points of $Q$ with $\bar C$. Hence, the sections of $L=\nu^*(\O_{\bar C}(2)-E_{\bar C}-q_1-q_2)$ would be cut out by plane conics $\gamma_t$ through $E_{\bar C}+q_1+q_2$. Since $g=8$ and $C$ is not hyperelliptic, then $h^0(C,(L\otimes \eta)^{\otimes 2})=6$, that is, all sections of $|(L\otimes \eta)^{\otimes 2}|$ are cut out by plane conics. Since $L^{\otimes 2}=(L\otimes \eta)^{\otimes 2}$, the divisor $\nu^*(2E_{\bar C}+2q_1+2q_2)=2\Delta_C+\nu^{-1}(2q_1+2q_2)$ is the pullback to $C$ of the intersection of $\bar C$ with a conic; such a conic has to be singular along $E_{\bar C}$, which has length $2$, and is thus a double line. Hence, $E_{\bar C},q_1,q_2$ are collinear, yielding the contradiction $L=L\otimes \eta$.\\
We now assume that $\bar{D_1}$ has degree $d_1\geq 4$ and is contained in a fixed line $l$, so that $\omega_C(-D)$ has the further intersection points of $l$ with $\bar C$ as base points and its base point free part is cut out by $|\mathcal{I}_{\bar{D_2}/\P^2}(2)|$; equation \eqref{dimD} then implies that $\bar D_2$ has degree $d_2=\delta(\bar C)+1$ and thus $d_1=5$. We now write $E_{\bar C}=E_1+E_2$ with $\bar{D_1}=E_1+D_1$, $\bar{D_2}=E_2+D_2$, where all divisors $E_i,D_i$ are effective. Since $\bar C$ is singular along $E_1$ and $E_2$, the equality $d_1=5$ implies that $E_1$ has degree $e\leq 1$ (because otherwise the intersection number between $\bar C$ and $l$ would be $>6$, which is absurd). If $e=1$, then $E_2$ has degree $\delta(\bar C)-1$, the divisor $D_2$ has degree $2$ and $\omega_C\otimes L^\vee=\omega_C(-D)=\nu^*(\O_{\bar C}(2)(-D_2-E_2)$. We get $L=\nu^*(\O_{\bar C}(1)+D_2-E_1)$ so that $\eta=\nu^*\O_{\bar C}(D_2-E_1)=\O_C(D_2-\Delta_1)$ where $\Delta_1\subset C$ is the degree $2$ divisor mapping to $E_1$; we get the contradiction $\Cliff_{\eta}(C)\leq \Cliff_{\eta}(D_2)=1$. If instead $e=0$, then $E_2=E_{\bar C}$, the divisor $D_2$ has degree $1$ and $\omega_C\otimes L^\vee=\nu^*(\O_{\bar C}(2)(-D_2-E_2+q)$, where $q$ is a base point and coincides with the further intersection point of $\bar C$ with $l$. We obtain $L=\nu^*(\O_{\bar C}(1)+D_2-q)$ and thus $\eta=\O_C(D_2-q)$, yielding the contradiction $\Cliff_{\eta}(C)\leq \Cliff_{\eta}(D_2)=0$. 

It remains to treat the case $\delta(\bar{C})=3$, that is, $g=7$, where the divisors in
$|\omega_C(-D)|$ are cut out by the pencil of cubics through $\bar D$, which has degree $9$. The curve $\bar C$ has only double points as singularities because otherwise it would have a $g^1_3$. Let $\sigma:S\to\mathbb P^2$ be the blow-up of $3$ (possibly infinitely near) points $n_1,n_2,n_3$ resolving the singularities of $\bar C$, and denote by $l$ the class of the pullback on $S$ of a line so that $L\otimes \eta=\mathcal O_C(l)$. We recall that $\omega_S\simeq \sigma ^*\O_{\P^2}(-3)+E_1+E_2+E_3$, where $E_1,E_2, E_3$ are effective (possibly reducible) divisors contracted by $l$ such that $E_i^2=-1$ and $E_i\cdot E_j=0$ for $i\neq j$. Since $\bar C$ has only double points, then $C\in |6l-2E_1-2E_2-2E_3|$ and thus $\omega_C=\mathcal O_C(3l-E_1-E_2-E_3)$. The isomorphism $L^{\otimes 2}=(L\otimes \eta)^{\otimes 2}$ implies that, for every divisor $D\in |L|$, $\mathcal O_C(2l-2D)\simeq \mathcal O_C$. We denote by $2D$ the $0$-dimensional subscheme of $S$ of length $12$ consisting of the $6$ points of $D$ along with the tangent direction of $C$ at each of them, and consider the following short exact sequence
\begin{equation}\label{gennaio}
0\longrightarrow -4l+2E_1+2E_2+2E_3\longrightarrow 2l\otimes I_{2D/S}\longrightarrow \mathcal O_C\longrightarrow 0.
\end{equation}
An element of the linear system $|2l\otimes I_{2D/S}|$ defines a conic totally tangent to $\bar C$. Since a plane sextic with ADE singularities admits at most finitely many totally tangent conics \footnote{indeed, if $q:Y\to \mathbb P^2$ is a double cover of $\mathbb P^2$ branched along a sextic $X_6$ with ADE singularities, then $Y$ is a $K3$ surface with ADE singularities and the inverse image on $Y$ of every conic $\gamma$ totally tangent to $X_6$ splits as the union of two smooth rational curves. Since smooth rational curves on $Y$ are finitely many, the same holds for the conics $\gamma$.}, we conclude that $h^0(2l\otimes I_{2D/S})=0$ for a general $D\in |L|$. Short exact sequence \eqref{gennaio} then implies that $h^1(-4l+2E_1+2E_2+2E_3)\neq0$. Because of the equalities $$h^1(-4l+2E_1+2E_2+2E_3)=h^1(l-E_1-E_2-E_3)=h^0(l-E_1-E_2-E_3),$$
which follow from Serre duality and the Riemann-Roch Theorem, respectively, we conclude that $h^0(l-E_1-E_2-E_3)\neq 0$, that is, $n_1,n_2,n_3$ lie on a line. Therefore, $\omega_C=\O_C(2l)$ and both $L\otimes \eta$ and $L$ are theta characteristics of $C$. We can write $\omega_C\otimes \eta=\mathcal O_C(4l-E_1-E_2-E_3-D)$ for $D\in |L|$; since $$h^0((4l-E_1-E_2-E_3)\otimes I_{D/S})\geq \chi (4l-E_1-E_2-E_3)-6=6=h^0(\omega_C\otimes \eta),$$ we conclude that the divisors in $|\omega_C\otimes \eta|$ are cut out by plane quartics passing through $n_1,n_2,n_3$ and $D$. We want to show that $\eta$ can be written as the difference of two effective divisors of degree $3$, or equivalently, there exists $D_3\in \mathrm{Sym}^3(C)$ such that $h^0(\omega_C\otimes \eta(-D_3))= 4$. Choose a plane cubic $X\in |3l-E_1-E_2-E_3-D|$ and a divisor $D_3\subset X\cap \bar C$ with support disjoint from that of $E_1+E_2+E_3+D$, and consider the following short exact sequence:
$$0\longrightarrow  l \longrightarrow (4l-E_1-E_2-E_3)\otimes I_{D+D_3/S}\longrightarrow  \mathcal O_X(4l-E_1-E_2-E_3-D-D_3)\longrightarrow 0.$$
We observe that $\mathcal O_X(4l-E_1-E_2-E_3-D-D_3)\simeq \mathcal O_X(3l-D-D_3)\simeq O_X$, where the last isomorphism follows from the existence of a pencil of cubics through $D+D_3$. 
We thus obtain $h^0((4l-E_1-E_2-E_3)\otimes I_{D+D_3/S})=4$ and, by restricting to $C$, $h^0(\omega_C\otimes \eta(-D_3))= 4$. As a consequence, $\mathcal O_C(D_3)\otimes \eta$ is effective and thus the contradiction $\Cliff_\eta(C)\le \Cliff_\eta(\O_C(D_3))=1$.

We now treat the case where $g=9$ and $L \otimes \eta$ is a $g^3_8$, which implies that $L$ is a $g^2_8$ and $C$ is tetragonal. Note that $\omega_C\otimes \eta\otimes L^\vee$ is also a $g^3_8$ and $\omega_C\otimes L^\vee$ is a $g^2_8$, so that $\omega_C\otimes L^\vee$ also computes the Prym-canonical Clifford dimension of $(C,\eta)$. By Lemma \ref{lemma3} $L$, $L\otimes \eta$, $\omega_C\otimes L^\vee$, $\omega_C\otimes \eta\otimes L^\vee$ are all base-point free and thus we can use Corollary \ref{quadr} to describe them having fixed a $g^1_4$, that we call $l$. Since both $L\otimes \eta$ and $\omega_C\otimes \eta\otimes L^\vee$ are $g^3_8$, for degree reasons they must be of type (4) in Corollary \ref{quadr}, that is, $L\otimes \eta=l+l'$ and $\omega_C\otimes \eta\otimes L^\vee=l+l''$ where $l'$ and $l''$ are $g^1_4$ different from $l$. If we had $L=l+F_4$ with $F_4$ an effective divisor of  degree $4$, then $\eta=l'-F_4$ and the equality $ \Cliff_{\eta}(C)=\Cliff_{\eta}(F_4)$ would contradict the assumption that $L$ computes the Prym-canonical Clifford dimension of $C$. 
 Therefore, Corollary \ref{quadr} yields either $L= 2l$, or $\omega_C\otimes L^\vee=2l$. In the former case we get $\eta= l+l'-2l= l'-l$, while in the latter case we get $\eta=l+l''-2l=l''-l$; in both cases we thus obtain the contradiction $ \Cliff_{\eta}(C)\leq\Cliff_{\eta}(l)=1$.\\\vspace{0.2cm}

\noindent\underline{CASE E: $\Cliff(C)=2$ and $\Cliff(L\otimes\eta)=\Cliff(L)=3$.} 
 
Then both $L$ and $L \otimes \eta$ are $g^r_{2r+3}$, and Proposition \ref{lemma4} implies $r \ge 2$. Since $L$ computes the Prym-canonical Clifford dimension, then $L$ and $L\otimes \eta$ are base point free by Lemma \ref{lemma3}. Moreover, since $\Cliff(\omega_C\otimes L^\vee)=\Cliff(\omega_C\otimes \eta\otimes L^\vee)=\Cliff(C)+1$, then $\omega_C\otimes L^\vee$ and $\omega_C\otimes\eta\otimes  L^\vee$ have at most one base point (indeed, when subtracting $s$ base points to a linear series, the Clifford index decreases by $s$).\\Assume that $C$ is a smooth plane sextic, so that $g=10$, and $L, L \otimes \eta$ are two $g^2_7$, or two $g^3_9$. To rule out the $g^3_9$ case, it is enough to apply Lemma \ref{5gonal} (smooth plane sextics have gonality $5$) excluding the cases (3) and (6) in the lemma because they would imply the existence of a $g^1_4$. The $g^2_7$ case can be excluded as follows. Fix a divisor $D=p_1+\cdots+p_7 \in |L|$; since by Riemann-Roch $h^0(\omega_C-D)=5$ and in the plane model of $C$ as a smooth sextic $\omega_C$ is cut out by plane cubics by the adjunction formula, 
there exists a $4$-dimensional family $\mathcal{D}=|\mathcal{I}_{p_1+\cdots+p_7/\P^2}(3)|$ of cubics passing through $p_1, \ldots p_7$. Then, \cite[Cor. 4.4, ch. V]{hartshorne} implies that either $4$ of $p_1, \ldots, p_7$ are collinear, or $7$ of them lie on a conic. If the latter case happens, the conic is fixed, and thus the contradiction $\dim \mathcal{D}=\dim |\O_{\P^2}(1)|=2 < 4$. The former case implies that up to reordering, $p_1 \ldots, p_6$ lie on a line $l$ (as $C$ is a sextic) and $p_7 \in Q$, where $Q$ is a conic. Then, $\dim \mathcal{D}=4$, and $\omega_C \otimes L^{\vee} \simeq \O_C(2)-p_7$. This implies $L \simeq \O_C(1)+p_7$, that is, $L$ has a base point at $p_7$, in contradiction with Lemma \ref{lemma3}.

Therefore we can assume that $C$ is tetragonal and refer to Corollary \ref{quadr} to classify linear series of type $g^r_{2r+3}$ with $r\geq 2$ on it. Only cases (3) and (4) in the corollary may occur, as one excludes cases (1) and (2) for degree reasons recalling that $L$, $L\otimes \eta$ are base point free while their residuals have at most one base point. Case (3) reads like $g^r_{2r+3}=(r-1)g^1_4+F$ for some effective divisor $F$. On the other hand, case (4) reads like $g^r_{2r+3}=l+ kg^1_4$ with $l=g^{r-2k}_{2r+3-4k}$ of type (3), that is, $l=(r-2k-1)g^1_4+F$ and thus $g^r_{2r+3}=(r-k-1)g^1_4+F$. Hence, independently whether we fall in case (3) or (4), we get $L=(r-k-1)g^1_4+F$ and $L'=(r-k'-1)g^1_4+F'$ for some effective divisors $F,F'$ of degree $-2r+7+4k\leq 3$ and $-2r+7+4k'\leq 3$, respectively, where the inequalities follow from $r-2k\geq 1$ and $r-2k'\geq 1$; since both $\deg F$ and $\deg F'$ are congruent to $2r+3$ modulo $4$, we conclude that $\deg F=\deg F'$ (that is, $k=k'$). Using that $\eta\simeq \O_C(F-F')$ and $h^0(\O_C(F))=h^0(\O_C(F'))=1$ since $C$ has no $g^1_3$, we compute $\Cliff_\eta(\O_C(F))=\deg F-1\leq 2$ in contradiction with the assumption that $L$ computes the Prym-canonical Clifford dimension of $C$.\\\vspace{0.2cm}

\noindent\underline{CASE F: $\Cliff(C)=\Cliff(L)=\Cliff(L\otimes \eta)=3$.} 

By Remark \ref{cliffbassi}, $L$ and $L \otimes \eta$ are two $g^1_5$, or two $g^2_7$, or two $g^3_9$ and the genus is $10$. By Proposition \ref{lemma4}, we exclude the case where both $L$ and $L \otimes \eta$ are $g^1_5$. We stress that $C$ cannot have Clifford dimension $3$ because in this case the line bundle computing the Clifford index is unique (see \cite[p.174]{elms}). Therefore, $C$ is either $5$-gonal, or a smooth plane septic.

   Assume $C$ is a smooth plane septic. Then, $L$ and $L \otimes \eta$ cannot be two $g^3_9$ as they occur in genus $10$ while $C$ has genus $15$. If $L, L \otimes \eta$ are two $g^2_7$, then they have to be very ample as $C$ has genus $15$. The unicity of a $g^2_d$ on a smooth plane curve of degree $d\ge4$ excludes this case. 
   
   We thus conclude that $C$ has Clifford dimension $1$ and gonality $5$.
Since $L$ and $L \otimes \eta$ compute the Clifford index of $C$, they and their residuals are base-point free and we can apply Lemma \ref{5gonal} to classify them.\\ We first consider the case where $L, L \otimes \eta$ are two $g^2_7$. We exclude type (1) of the lemma for degree reasons and type (3) because it requires $r-2\geq 1$.  Type (4) reads like $\omega_C-g^2_7=(g-6)g^1_5$, and comparing the degrees we obtain $g=7$, against our assumption $g \ge d+1=8$. If a $g^2_7$ is of type (5), then $\omega_C-g^2_7-(g-7)g^1_5$ is effective and, for degree reasons, this yields $g=8$. In case (6), $\omega_C-g^2_7-g^1_5$ is a $g^{g-8}_{2g-14}$ with $g-8\geq 1$ and this yields the contradiction $\Cliff(g^{g-8}_{2g-14})=2<\Cliff(C)=3$. Hence, $L$ and $L\otimes \eta$ are of type (2), or of type (5) and $g=8$.\\If both $L$ and $L\otimes \eta$ are of type (2), then $L=g^1_5+E_2$ and $L\otimes \eta=g^1_5+E_2'$ for some effective degree $2$ divisors $E_2, E_2'$ satisfying $h^0(\O_C(E_2))=h^0(\O_C(E_2'))=1$; since $\eta=\O_C(E_2-E_2')$, this leads to the contradiction $\Cliff_{\eta}(\O_C(E_2))=1$. Similarly, if $g=8$ and both $L$ and $L \otimes \eta$ are of type (5), then $\omega_C\otimes L^\vee=g^1_5+E_2$ and $\omega_C\otimes \eta\otimes L^\vee=g^1_5+E_2'$ for some effective degree $2$ divisors $E_2, E_2'$ and, as before, one gets the contradiction $\Cliff_{\eta}(\O_C(E_2))=1$. It remains the possibility where $g=8$ and one between $L,L\otimes \eta$ is of type (2), while the other is of type (5) with respect to any fixed $g^1_5$.  Without loss of generality, we can assume $L=g^1_5+E_2$ and $\omega_C\otimes \eta\otimes L^\vee=g^1_5+E'_2$ for some effective degree $2$ divisors $E_2, E_2'$.
Since every base point free $g^2_7$ is simple, $L \otimes \eta$ defines a birational morphism $f \colon C \rightarrow \bar{C}\subseteq \P^2$, where $\bar{C}$ has arithmetic genus $15$ and thus its $\delta$-invariant is $\delta(\bar{C})=7$. We remark that $\bar{C}$ has at most double points as singularities because otherwise $\mathrm{gon}(C)\le4$, against our assumptions. In particular, we can fix $g^1_5= f^*(\O_{\bar{C}}(1)-n)$ where $n$ is a double point of $\bar{C}$. With this choice we have $L=f^*(\O_{\bar{C}}(1)-n)+E_2$ and $L\otimes\eta=f^*(\O_{\bar{C}}(1))$ and thus $\eta=\O_C(E_2)\otimes\nu^*(\O_{\bar{C}}(n))^{\vee}$, yielding the contradiction $\Cliff_\eta (C)\le \Cliff_{\eta}(\O_C(E_2))=1$. 
\\It remains to treat the case where $L, L \otimes \eta$ are two $g^3_9$ and $g=10$. Cases (3) and (6) of Lemma \ref{5gonal} can be ruled out because they would imply the existence of a $g^1_4$. We exclude all the other possibilities for degree reasons.

\end{proof}

\begin{remark}
    If $C$ is hyperelliptic and $\eta=\O_C(w_1+w_2+w_3-w_4-w_5-w_6)$ with $w_1,\ldots, w_6$ Weierstrass points, then $C\subset \P(H^0(\omega_C\otimes \eta)^\vee)$ has a trisecant line and thus $\Cliff_\eta(C)=2$. Indeed, the divisor $D=w_1+w_2+w_3$ satisfies $h^0(\O_C(D))=h^0(\O_C(D) \otimes \eta)=1$ and so, by Riemann-Roch, $D$ imposes one condition less than expected to $\omega_C \otimes \eta$. 
\end{remark}
\smallskip

\section{Prym-Clifford index of a general Prym curve}\label{cinque}
\subsection{Prym-Clifford index of hyperelliptic Prym curves} Let $C$ be a hyperelliptic curve, and let $M$ denote its $g^1_2$. The following result by Verra provides a description of all $2$-torsion line bundles in $\Pic^0(C)$ in terms of Weierstrass points. Let $W$ be the set of Weierstrass points of $C$, which has cardinality $2g+2$, and denote by $\mathcal P(W)$ its power set. 
\begin{lemma}\cite[Lemma 4.3]{verra}\label{sandro}
With the above notation, for every integer $1 \le k \le \frac{g+1}{2}$ let 
$$\beta_k: W_k\rightarrow \Pic^0(C)[2]$$
be the map sending a set $Z=\{w_1,\ldots,w_{2k}\}$ to the line bundle $\beta_k(Z):=M^{\otimes k}(-w_1-\cdots-w_{2k})$. Then, the following hold:
\begin{enumerate}
\item[(i)] the map $\beta_k$ is injective for $k\leq g/2$;
\item[(ii)] for $k=\frac{g+1}{2}$ the map $\beta_k$ is $2:1$;
\item[(iii)] for every non-trivial $\eta\in \Pic^0(C)[2]$ there exist $1 \le k \le \frac{g+1}{2}$ and $Z\in W_k$ such that $\eta=\beta_k(Z)$.
\end{enumerate}
\end{lemma}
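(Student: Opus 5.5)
The whole proof rests on one rigidity statement about sets of Weierstrass points, plus a count. Throughout, $W_k$ denotes the set of $2k$-element subsets of $W$, and I use the classical relations
\[
2w\sim M \ \text{ for every } w\in W, \qquad \sum_{w\in W} w\sim (g+1)M,
\]
where the second is the divisor of the function $y$ in a model $y^2=f(x)$. I also use the structure of special linear series on hyperelliptic curves (\cite[p. 41]{acgh}). Any complete $g^r_d$ with $d\le g$ is $rM$ plus $d-2r$ base points. Consequently, an effective divisor of degree $\le g$ containing no pair $p+\iota(p)$ moves in no pencil. This applies in particular to any sum of distinct Weierstrass points of degree $\le g$.

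\emph{Key claim.} Let $S\subseteq W$ with $|S|=2j$. Then $S\sim jM$ if and only if $S=\emptyset$ or $S=W$.
\begin{itemize}
\item If $1\le 2j\le g$: $S$ is rigid while $jM$ moves, so $S\not\sim jM$.
\item If $2j\ge g+2$: the complement $W\setminus S$ has at most $g$ points. By the relation $\sum_{w\in W}w\sim(g+1)M$, the condition $S\sim jM$ is equivalent to $W\setminus S\sim (g+1-j)M$, which reduces to the previous case.
\item If $2j=g+1$ (so $g$ is odd and $j\ge 2$ once $g\ge3$): pick $w\in S$. Then $S-w$ is a rigid divisor of degree $g$, whereas $jM-w\sim (j-1)M+w$ has $h^0\ge j\ge 2$. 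So again $S\not\sim jM$.
\end{itemize}
This middle-degree case $2j=g+1$ is the only place where the degree bound does not directly give rigidity, and I expect it to be the main point requiring care.

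\emph{Reducing equalities of $\beta$ to the claim.} Suppose $\beta_k(Z)\simeq\beta_{k'}(Z')$ with $|Z|=2k$ and $|Z'|=2k'$. Then $Z\sim Z'+(k-k')M$. Cancel the common part $A=Z\cap Z'$, and use $2w\sim M$ to move the points of $Z'\setminus A$ to the other side. This yields $S\sim \frac{|S|}{2}M$ for the symmetric difference $S=Z\triangle Z'$, which has even cardinality. By the key claim, either $Z=Z'$ or $Z'=W\setminus Z$. The latter forces $|Z|=|Z'|=g+1$, i.e.\ $k=k'=\frac{g+1}{2}$.
\begin{itemize}
\item This gives (i) immediately.
\item It also shows that the images of the $\beta_k$ for distinct $k$ are disjoint.
\item For (ii), I check directly that $\beta_k(Z)=\beta_k(W\setminus Z)$ when $2k=g+1$. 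Write $Z^c=W\setminus Z$. Then $Z\sim (g+1)M-Z^c$, so $M^k(-Z)\simeq M^{k-(g+1)}(Z^c)=M^{-k}(Z^c)$. Finally $M^{-k}(Z^c)\simeq M^k(-Z^c)$ because $2Z^c\sim (g+1)M=2kM$. Hence every fibre has exactly two points.
\item Each $\beta_k(Z)$ is non-trivial, by the claim applied with $S=Z$, which satisfies $\emptyset\ne Z\ne W$.
\end{itemize}

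\emph{Surjectivity (iii) by counting.} The union of the images of the $\beta_k$ therefore has cardinality
\[
\sum_{1\le k\le g/2}\binom{2g+2}{2k}\;+\;\tfrac12\binom{2g+2}{g+1}\ \ (\text{last term only if } g \text{ odd}).
\]
Since the even-size subsets of $W$ number $2^{2g+1}$ and are paired symmetrically by complementation, this sum equals $\tfrac12\bigl(2^{2g+1}-2\bigr)=2^{2g}-1$. This is exactly the number of non-trivial elements of $\Pic^0(C)[2]$, so every non-trivial $\eta$ lies in the image of some $\beta_k$, which proves (iii).
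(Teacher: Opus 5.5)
Your proof is correct and complete. The paper does not actually prove this lemma, since it is quoted from Verra. The only supporting argument it gives is the remark after the statement that $k$ in (iii) is unique. There, for $k'<k$, it rewrites $\beta_k(Z)\simeq\beta_{k'}(Z')$ as a linear equivalence between two sums of Weierstrass points of degree $k+k'\le g$ and invokes the structure of special series on a hyperelliptic curve. That is the same rigidity you use in the range $2j\le g$ of your key claim.

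Your route is more systematic. Passing to the symmetric difference $S=Z\triangle Z'$ via $2w\sim M$ reduces every coincidence $\beta_k(Z)\simeq\beta_{k'}(Z')$ to one statement: $\frac{|S|}{2}M-S$ is trivial only for $S\in\{\emptyset,W\}$. In effect you identify $\Pic^0(C)[2]$ with the even subsets of $W$ under symmetric difference, modulo $\{\emptyset,W\}$. This gives (i), (ii), disjointness of the images and non-triviality at once.

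Cancelling $Z\cap Z'$ first is also what makes the rigidity step clean. The paper's sketch does not address points common to the two sets, which can produce repeated Weierstrass points in the divisors it compares.

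The cost of your approach is the separate middle case $2j=g+1$, which you handle correctly by removing one point. The gain is that (iii) follows from the count $2^{2g}-1$, using only $|\Pic^0(C)[2]|=2^{2g}$, without constructing a representative for a given $\eta$.

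One detail worth stating explicitly: $Z'=W\setminus Z$ forces $k=k'=\frac{g+1}{2}$ only because both $k,k'\le\frac{g+1}{2}$.
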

We stress that the fibers of $\beta_{\frac{g+1}{2}}$ are of the form $\{Z, W\setminus Z\}$. Moreover, in part (iii) the integer $k$ is uniquely determined (and so is $Z$ if $k\leq g/2$). Indeed, suppose that $M^{\otimes k}(-p_1-\cdots-p_{2k})\simeq M^{\otimes k'}(-q_1-\cdots-q_{2k'})$ for some $k'<k\le (g+1)/2$ and  Weierstrass points $p_i, q_j$. Then we obtain an isomorphism \[
\O_C(p_1+\cdots+ p_k+q_{k'+1}+ \cdots+q_{2k'}) \simeq \O_C(q_1+\cdots+ q_{k'}+p_{k+1}+ \cdots+p_{2k})
\] and, since $k+k'\le g$, this line is special. In particular, up to subtracting possible base points, it must be a multiple of the $g^1_{2}$.
This is impossible because every divisor in $|rg^1_2|$ consists of the preimage of $r$ points of $\P^1$ and the Weierstrass points always appear with multiplicity $2$.

Thanks to the isomorphism $M^{\otimes k}(-w_1-\cdots-w_{2k})\simeq \O_C(w_1+ \cdots+ w_k-w_{k+1}- \cdots- w_{2k})$, we get the following:
\begin{corollary}
Let $C$ be a hyperelliptic curve of genus $g$ and denote by $W$ the set of its Weierstrass points. Then, every nontrivial $2$-torsion line bundle $\eta$ can be written as
\begin{equation} \label{eta}
\eta=\O_C(w_1+ \cdots+ w_k-w_{k+1}- \cdots- w_{2k})
\end{equation}
for some $1 \le k \le \lfloor \frac{g+1}{2}\rfloor$, where $\{w_1, \ldots, w_k, w_{k+1}, \ldots, w_{2k} \}\subset W$. 

If $k\le g/2$ this writing is unique up to reordering of the points $w_i$. If $k=(g+1)/2$ a subset $Z\subset W$ of cardinality $(g+1)/2$ defines the same $\eta$ as its complementary subset $W\setminus Z$.
\end{corollary}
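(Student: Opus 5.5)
The corollary follows by translating Lemma \ref{sandro} through one isomorphism. First I will verify
\[
M^{\otimes k}(-w_1-\cdots-w_{2k})\simeq \O_C(w_1+\cdots+w_k-w_{k+1}-\cdots-w_{2k}).
\]
For every Weierstrass point $w$ one has $M\simeq \O_C(2w)$. Hence $M^{\otimes k}\simeq \O_C(2w_{k+1}+\cdots+2w_{2k})$. Subtracting $w_1+\cdots+w_{2k}$ then gives
\[
\O_C(-w_1-\cdots-w_k+w_{k+1}+\cdots+w_{2k}).
\]
This is the inverse of the claimed bundle, and it is isomorphic to it because the bundle is $2$-torsion. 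Indeed, $\O_C(2(w_1+\cdots+w_k))\simeq M^{\otimes k}\simeq \O_C(2(w_{k+1}+\cdots+w_{2k}))$.

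\textbf{Existence.} Given a nontrivial $\eta\in\Pic^0(C)[2]$, Lemma \ref{sandro}(iii) produces some $k$ with $1\le k\le \frac{g+1}{2}$ and a set $Z=\{w_1,\ldots,w_{2k}\}\in W_k$ with $\eta=\beta_k(Z)$. Since $k$ is an integer, the bound $k\le\frac{g+1}{2}$ is the same as $k\le\lfloor\frac{g+1}{2}\rfloor$. The isomorphism above then rewrites $\eta$ in the form \eqref{eta}.

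\textbf{Uniqueness.} I will use the remark following Lemma \ref{sandro}, which shows that $k$ is uniquely determined by $\eta$. For $k\le g/2$, Lemma \ref{sandro}(i) says $\beta_k$ is injective, so the set $Z$ is determined. A reordering of the points of $Z$, including moving points between the positive and negative halves, does not change $\beta_k(Z)$. So the writing \eqref{eta} is unique up to reordering.

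For $k=\frac{g+1}{2}$ (so $g$ is odd), Lemma \ref{sandro}(ii) and the description of its fibers give $\beta_k^{-1}(\eta)=\{Z,W\setminus Z\}$. Here $|W\setminus Z|=2g+2-(g+1)=g+1=2k$, so the complement is again an admissible set. Thus $Z$ and $W\setminus Z$ define the same $\eta$, and no other set does.

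One point needs care. The corollary states the maximal case using a subset of cardinality $(g+1)/2$, whereas the lemma uses subsets of size $2k=g+1$. I will read the statement as referring to the $2k$-element support $\{w_1,\ldots,w_{2k}\}$. With that reading the complement statement follows directly from Lemma \ref{sandro}(ii).

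I expect no real obstacle: the only substantive input is the uniqueness of $k$, which the paper has already proved.
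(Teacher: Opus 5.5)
Your proposal is correct and follows the paper's route: the corollary comes from Lemma~\ref{sandro} via the isomorphism $M^{\otimes k}(-w_1-\cdots-w_{2k})\simeq \O_C(w_1+\cdots+w_k-w_{k+1}-\cdots-w_{2k})$, together with the remark after the lemma showing that $k$ is determined by $\eta$. The isomorphism also follows in one step from $M^{\otimes k}\simeq\O_C(2w_1+\cdots+2w_k)$, and you are right that the cardinality $(g+1)/2$ in the last sentence of the statement should read $g+1=2k$, i.e.\ it refers to the full support $\{w_1,\ldots,w_{2k}\}$.
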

We will now prove that the Prym-canonical Clifford index of a hyperelliptic curve $(C,\eta)$ depends on the $2$-torsion line bundle $\eta$ and, more precisely, on the integer $k$ in \eqref{eta}.

\begin{proof}[Proof of Theorem \ref{contoiper}]
Let us consider the line bundle $A=\O_C(w_1+ \ldots + w_k)$ provided by \eqref{eta}. We claim that $h^0(A)=h^0(A\otimes \eta)=1$.
Indeed, if we had $h^0(A)=r+1$ for some positive $r$, then up to subtracting its base points, $A$ would be a multiple of the $g^1_2$, which is clearly a contradiction since the $w_i$ are Weierstrass points. Hence, $\Cliff_{\eta}(A)= k -1$ and we need to show that $\Cliff_{\eta}(A)=\Cliff_{\eta}(C)$. 

Let $L$ be a line bundle computing the Prym-canonical Clifford dimension of $(C,\eta)$. Firstly, we prove that $\dim \Cliff_{\eta}(C)=(r(L),r(L\otimes \eta))=(0,0)$. By contradiction, assume that this is not the case, so that $2\le h^0(L)\le h^0(L\otimes \eta)$ by Proposition \ref{lemma1}. Since $L$ and $L\otimes \eta$ are special, base point free by Lemma \ref{lemma3} and of the same degree, then both of them should coincide with $r(L)g^1_2$ which is a contradiction as they are distinct. 

If $\Cliff_{\eta}(C)=\frac{g-1}{2}$ (which implies $g$ odd and $k=\frac{g+1}{2}$ by what we have proved above), we have done. So we may assume $\Cliff_{\eta}(C)<\frac{g-1}{2}$, or equivalently, $\deg L<\frac{g+1}{2}$; we need to prove that $L=A$. Taking $D\in |L|$ and $D'\in |L\otimes \eta|$, it is enough to show that $D$ and $D'$ are sums of Weierstrass points. Since $2D,2D'\in |L^{\otimes 2}|$ and $\deg L^{\otimes 2}<g+1$, then $L^{\otimes 2}$ is special and has no base points because $2D$ and $2D'$ have no common points according to Lemma \ref{lemma3}. Therefore, $L^{\otimes 2}$ is a multiple of the $g^1_2$ and both $2D$ and $2D'$ are sums of fibers of the morphism defined by the $g^1_2$. However, denoting by $\iota$ the hyperelliptic involution on $C$, divisors of the form $2(P + \iota(P))$ for $p \in C$ do not appear in $2D$ and $2D'$ because otherwise $L\otimes M^\vee$ (respectively, $L\otimes \eta\otimes M^\vee$)  would be effective in contradiction with $h^0(L)=h^0(L\otimes \eta)=1$. As a consequence, both $D$ and $D'$ are sums of Weierstrass points and this concludes the proof.
\end{proof}
By Lemma \ref{first lemma}, $\omega_C\otimes \eta$ is base point free if and only if $k>1$. For $k>1$, Corollary \ref{coro} yields $h^0(\omega_C\otimes \eta\otimes M^\vee)=g-3$ and we can consider the rational normal scroll containing $\varphi_{\omega_C\otimes \eta}(C)\subset \P^{g-2}$, which is defined as \[
 S= \bigcup_{D_\lambda \in |M|} \langle D_{\lambda} \rangle \subset \P^{g-2}.
 \]
 The scroll $S$ is of type $S(e_1,e_2)$ for some integers $e_1 \ge e_2 \ge 0$ satisfying $f:=e_1+e_2 =g-3$; this means that $S$ is the image of $\P(\mathcal{E})=\P(\mathcal{O}_{\P^1}(e_1)\oplus \mathcal{O}_{\P^1}(e_2))$ in $\P^{g-2}=\P^{f+1}$ through the morphism $j \colon \P(\mathcal{E})\rightarrow S \subset \P H^0(\P(\mathcal{E}), \O_{\P(\mathcal{E})}(1))=\P^{f+1}$ (we refer to \cite{schreyer} for details).

 The integers $(e_1, e_2)$ can be determined as follows (again cf. \cite{schreyer}). Set: \[ \begin{split}
d_0  & := h^0(\omega_C \otimes \eta) - h^0(\omega_C \otimes \eta \otimes M^\vee)=2, \\
d_1 & := h^0(\omega_C \otimes \eta \otimes M^\vee)- h^0(\omega_C \otimes \eta \otimes M^{-2}), \\
 & \vdots \\
 d_j & := h^0(\omega_C \otimes \eta \otimes M^{-j}) - h^0(\omega_C \otimes \eta \otimes M^{-(j+1)}), \\
 & \vdots 
 \end{split} \]
 Since $M$ has degree $2$, then $0 \le d_j \le 2$ for all $j$. For $i \in \{1,2\}$ the number $e_i$ can be computed as\[
e_i = \# \{j | d_j \ge i \}-1.
\] 
We need a preliminary lemma:
\begin{lemma} \label{lemma utile}
Let $C$ be a hyperelliptic curve and denote by $M$ its $g^1_2$.\\ A line bundle $A \in \Pic^d(C)$ with $h^0(A)=r+1$ satisfies $h^0(C, A \otimes M^\vee)= h^0(C,A)-1$ if and only if $A= M^{\otimes r}(p_1+\cdots+p_{d-2r})$, where $p_1,\ldots,p_{d-2r}$ are base points. 
\end{lemma}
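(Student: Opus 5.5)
The plan is to reduce everything to the standard description of linear series on a hyperelliptic curve: if $h^0(A)=r+1$, then $A\simeq M^{\otimes s}(q_1+\cdots+q_{d-2s})$ with $q_1+\cdots+q_{d-2s}$ the base divisor of $|A|$. I will use this in the form valid for all degrees: the moving part of any complete linear series on $C$ is a pullback from $\P^1$, so it equals $M^{\otimes s}$ with $s\geq r$. When $A$ is special, Clifford-type considerations (\cite[p. 41]{acgh}) give $s=r$. For nonspecial $A$ I will work directly with the comparison below rather than with this description.

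For the ``if'' direction, assume $A=M^{\otimes r}(p_1+\cdots+p_{d-2r})$ with the $p_i$ base points. Then $A\otimes M^\vee=M^{\otimes (r-1)}(p_1+\cdots+p_{d-2r})$. This contains $M^{\otimes(r-1)}$ translated by a fixed effective divisor, so $h^0(A\otimes M^\vee)\geq h^0(M^{\otimes(r-1)})=r$. On the other hand, $h^0(A\otimes M^\vee)\leq h^0(A)-1=r$, because subtracting an effective divisor moving in a pencil (any general fibre of the $g^1_2$) drops $h^0$ by at least one when $r\ge 1$; if $r=0$ the claim is immediate, since $A\otimes M^\vee$ has degree $d-2$ and no sections. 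Hence equality holds.

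For the ``only if'' direction, suppose $h^0(A\otimes M^\vee)=r$. I will argue by induction on $r$. For $r=0$ there is nothing to prove, as $A=\O_C(p_1+\cdots+p_d)$ with all points base points. For $r\geq1$, set $A'=A\otimes M^\vee$, so that $h^0(A')=r$. By induction, or by a direct analysis, I want $h^0(A'\otimes M^\vee)=r-1$, which would give $A'=M^{\otimes(r-1)}(\text{base points})$ and hence the claim for $A$.

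The key observation is as follows. Take a general point $p$ and set $\bar p=\iota(p)$, so that $p+\bar p\in|M|$. Then $h^0(A(-p-\bar p))=r$ means $p+\bar p$ imposes only one condition on $|A|$. I will equivalently show that $|A|$ factors through the hyperelliptic map, i.e.\ its moving part is composed with $\iota$. Indeed, for general $p$, the divisors of $|A|$ through $p$ form a hyperplane $|A(-p)|$, and the equality $|A(-p)|=|A(-p-\bar p)|+\bar p$ says every section vanishing at $p$ vanishes at $\bar p$. Hence $\varphi_A(p)=\varphi_A(\bar p)$ for general $p$, so $\varphi_A$ factors through $C\to\P^1$. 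The moving part of $|A|$ is therefore $\pi^*$ of a complete $g^r_r$ on $\P^1$, namely $M^{\otimes r}$, and the remaining fixed part consists of $d-2r$ base points.

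The main obstacle is justifying that the factorization yields exactly $M^{\otimes r}$ rather than $M^{\otimes s}$ with $s>r$, i.e.\ that the series pulled back is complete on $\P^1$. I will handle this by counting: $\pi^*|\O_{\P^1}(s)|$ has dimension $s$, and the moving part of $|A|$ has dimension exactly $r$. The pullback of a linear series on $\P^1$ of degree $s$ and dimension $r$ lies in $|M^{\otimes s}|$. Completeness of $|A|$ then forces it to contain all of $\pi^*|\O(s)|$, so $s=r$.
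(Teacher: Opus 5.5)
Your proof is correct and follows the paper's route: the hypothesis forces $\varphi_A$ to identify each general point with its conjugate, so it factors through the hyperelliptic map, and your dimension count supplies the detail (left implicit in the paper) that the moving part is exactly $M^{\otimes r}$ rather than some $M^{\otimes s}$ with $s>r$. One flaw is your opening claim that the moving part of every complete linear series on $C$ is pulled back from $\P^1$: this is false for nonspecial bundles, e.g.\ very ample ones of degree $\ge 2g+1$, but your argument never uses it (nor the unfinished induction), so both can simply be deleted.
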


\begin{proof}
The if part of the statement is obvious. \\For the converse implication, up to replacing $|A|$ with its base-point free linear subsystem, we can assume that $|A|$ is base-point free. Since $h^0(C, A \otimes M^\vee)= h^0(C,A)-1$ by hypothesis, then the morphism $\varphi_A$ defined by $A$ does not separate pairs of points which are conjugate under the hyperelliptic involution, that is, $\varphi_A$ factors through the morphism $f \colon C \rightarrow \P^1$ defined by the $g^1_2$. This is equivalent to $A=M^{\otimes r}$.
\end{proof}

\begin{proposition} \label{prop scroll}
Let $2 \le k \le \lfloor \frac{g+1}{2} \rfloor$ and $\eta= \O_C(w_1+ \ldots +w_k-w_{k+1}- \ldots -w_{2k}) \in \Pic^0(C)$ be a non trivial $2$-torsion line bundle. Then, the scroll $S\supset \varphi_{\omega_C\otimes \eta}(C)$ is of type $S=S(g-1-k, k-2)$.
\end{proposition}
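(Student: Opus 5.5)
The plan is to compute the numbers $d_j$ explicitly and then read off $(e_1,e_2)$ from the formula $e_i=\#\{j\mid d_j\ge i\}-1$. Throughout I write $W_{2k}:=w_1+\cdots+w_{2k}$ and use $\omega_C\simeq M^{\otimes(g-1)}$. Since $2w_i\sim M$, we have
\[
\eta\simeq M^{\otimes k}(-W_{2k})\simeq M^{\otimes(-k)}(W_{2k}),
\]
and therefore
\[
\omega_C\otimes\eta\otimes M^{-j}\simeq M^{\otimes m}(W_{2k}),\qquad m:=g-1-k-j .
\]
So everything reduces to computing $a_m:=h^0(M^{\otimes m}(W_{2k}))$ for all integers $m\le g-1-k$.

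\textbf{Step 1: the nonspecial range.} By Riemann--Roch and Serre duality,
\[
a_m=2m+2k-g+1+a_{m'},\qquad m':=g-1-2k-m=j-k .
\]
When $j\le k-1$ we have $m'<0$. In that case I will check that $a_{m'}=0$. The degree of $M^{\otimes m'}(W_{2k})$ is $2m'+2k$; if it had a section, then $W_{2k}$ would contain a divisor of $|M^{\otimes(-m')}|$. This is impossible for $m'\le -1$, because every divisor of $|rM|$ contains each Weierstrass point with even multiplicity, while $W_{2k}$ is reduced. The one delicate value is $k=\frac{g+1}{2}$. There I will rely on the uniqueness discussion after Lemma \ref{sandro}: the two representations $Z$ and $W\setminus Z$ give the same $\eta$, and the same parity argument goes through. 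Hence, for $0\le j\le k-1$, $a_m$ equals the Riemann--Roch value $g-1-2j$.

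\textbf{Step 2: the special range.} When $j\ge k$ we have $m'\ge 0$. Here I will show that $a_m=m+1$ for $m\ge 0$ and $a_m=0$ for $m<0$. The argument is the same as in the proof of Theorem \ref{contoiper}: after removing base points, a special line bundle on $C$ is a multiple of $M$, and since the $w_i$ are distinct Weierstrass points, none of them can appear in a moving divisor with multiplicity one. Thus the points of $W_{2k}$ are base points and $h^0(M^{\otimes m}(W_{2k}))=h^0(M^{\otimes m})=m+1$. One can equally phrase this through Lemma \ref{lemma utile}.

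\textbf{Step 3: the differences.} Combining the two steps gives $d_j=2$ for $0\le j\le k-2$ (both terms lie in the Riemann--Roch regime and the degree drops by $2$), and $d_j=1$ for $k-1\le j\le g-1-k$. The transition value $j=k-1$ has to be checked by hand, as must the last index $j=g-1-k$, where $m=0$ and $a_{m-1}=0$. For $j>g-1-k$ we get $d_j=0$. Consequently
\[
e_2=(k-1)-1=k-2,\qquad e_1=(g-k)-1=g-1-k .
\]
As a consistency check, $e_1+e_2=g-3$, and $d_0=2$ as required.

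I expect the main obstacle to be the boundary bookkeeping: first, pinning down $d_{k-1}$ exactly where the computation switches from Step 1 to Step 2; second, the case $k=\frac{g+1}{2}$, where $h^0(\mathcal O_C(W_{g+1}))$ is not $1$ and the representation of $\eta$ is not unique. For that boundary I will first try using the complementary subset $W\setminus Z$ together with the vanishing argument of Step 1, and verify the resulting values directly by Riemann--Roch.
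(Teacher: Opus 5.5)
Your argument is correct, and it runs in the opposite direction from the paper's. The paper never computes the $d_j$ individually. It takes the first index $i$ with $d_i=1$ and uses Lemma \ref{lemma utile} to write $\omega_C\otimes\eta\otimes M^{-i}\simeq M^{\otimes(g-2-2i)}(p_1+\cdots+p_{2i+2})$ with the $p_j$ base points. From this it reads off $d_l=1$ for $i\le l\le g-2-i$, so $S=S(g-2-i,i-1)$. Only then does it identify $i=k-1$: it rewrites $\eta\simeq M^{\otimes(i+1)}(-p_1-\cdots-p_{2i+2})$, shows the $p_j$ are Weierstrass points, and invokes the uniqueness of $k$ from the discussion after Lemma \ref{sandro}.

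You instead start from the known shape $\eta\simeq M^{\otimes(-k)}(W_{2k})$ and compute every $h^0(M^{\otimes m}(W_{2k}))$ from Riemann--Roch and the structure of special series on a hyperelliptic curve. This is longer but fully explicit, and it needs no uniqueness statement. It also treats $k=\frac{g+1}{2}$ on the same footing as the other cases. There $d_j\in\{0,2\}$ for every $j$, so no index with $d_i=1$ exists and the paper's argument does not literally apply. Your bookkeeping gives the balanced scroll $S(k-2,k-2)$ directly. In Step 2 you should also say why $M^{\otimes m}(W_{2k})$ is special for $m\ge 0$: its $h^1$ equals $a_{m'}\ge h^0(M^{\otimes m'})\ge 1$.

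The one step whose justification would not work as you sketch it is the vanishing $a_{m'}=0$ for $m'\le -1$ when $2k=g+1$. You need it in Step 1, and for $m<0$ in Step 2. Your parity argument uses that $W_{2k}$ is the only divisor in $|\O_C(W_{2k})|$. That holds for $2k\le g$, but fails for $2k=g+1$, since then $h^0(\O_C(W_{g+1}))\ge 2$ by Riemann--Roch. Replacing $Z$ by $W\setminus Z$ does not help, because $W\setminus Z\sim Z$.

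It suffices to show $h^0(M^{-1}(W_{g+1}))=0$, and this is easy. Since $M\simeq\O_C(2w_1)$, one has $M^{-1}(W_{g+1})\simeq\O_C(w_2+\cdots+w_{g+1}-w_1)$. The divisor $w_2+\cdots+w_{g+1}$ consists of $g$ distinct Weierstrass points, so by your own parity argument it is the unique divisor in its linear system. It does not contain $w_1$, so the vanishing follows.
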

\begin{proof}
With the notation introduced above, let $i$ be the first integer such that $d_i=1$. Since $h^0(\omega_C \otimes \eta\otimes M^{-i})=g-1-2i$ and $h^0(\omega_C \otimes \eta\otimes M^{-(i+1)})=g-2-2i$, Lemma \ref{lemma utile} yields 
\begin{equation}\label{joker}\omega_C \otimes \eta\otimes M^{-i}=M^{\otimes (g-2-2i)}+p_1+\ldots+p_{2i+2}\end{equation} where $p_1, \ldots, p_{2i+2}$ are base points. We get that $d_l=1$ for all $ i \le l \le g-2-i$ so that $S$ is of type $S(g-2-i, i-1)$. It only remains to show that $i=k-1$. Since $\omega_C = (g-1)g^1_2$, from \eqref{joker} we get \[
M^{\otimes (g-1-i)} \otimes \eta= M^{\otimes (g-2-2i)}+p_1 + \ldots + p_{2i+2},
\] and thus \[
\eta= M^{\otimes (i+1)} -(p_1 + \ldots + p_{2i+2}).
\] Since $\eta^{\otimes 2} \simeq \O_C$, it follows that $p_1, \ldots ,p_{2i+2}$ are Weierstrass points and thus \[
\eta=\O_C(p_1+ \ldots+ p_{i+1}-p_{i+2}- \ldots - p_{2i}),
\] which concludes the proof.
\end{proof}

We will now apply Park's results \cite{parkhyper}, which determine the syzygies of an embedded hyperelliptic curve $C\subset \P^r$ only from the type of scroll containing it. 

We recall that, given a very ample line bundle $L$ on a hyperelliptic curve $C$, the embedded curve $C\subset \P^r=\P(H^0(C,L)^{\vee})$ is said to be \emph{$m$-regular in the sense of Castelnuovo--Mumford} if $m$ is the smallest integer such that the ideal sheaf $I_{C/\P^r}$ is $m$-regular, that is, $H^1(\P,I_{C/\P^r}(m-1))=0$. Equivalently, considering a  minimal graded free resolution of $R(C):= \bigoplus_n H^0(C, L^{\otimes n})$ as a finitely generated module over $R:=\oplus_n\mathrm{Sym}^n(H^0(C,L))$ :\[
\cdots \to \oplus _jR(-i-j)^{\beta_{i,j}}\to \cdots \to \oplus _jR(-1-j)^{\beta_{1,j}} \to \oplus _jR(-j)^{\beta_{0,j}}\to R(C),
\]
the curve $C\subset \P^r$ is $m$-regular if $\beta_{i,j}=0$ for all $i\ge 0$ and $j\ge m$.

Analogously, having fixed integers $k \ge 2$ and $p \ge 1$, property $(N_{k,p})$ for $C\subset \P^r$ is defined by the vanishing $\beta_{i,j}(C)=0$ for $1 \le i \le p$ and all $j \ge k$. 

Applying \cite[Thm 1.3]{parkhyper} and Proposition \ref{prop scroll}, we get the following result: \begin{corollary}
    Let $C$ be a hyperelliptic curve of genus $g$ and let $\eta= \O_C(w_1+ \ldots+w_k-w_{k+1}- \ldots-w_{2k})$ with $w_1, \ldots, w_{2k}$ Weierstrass points and $3 \le k \le \lfloor \frac{g+1}{2} \rfloor$. Set \[\nu:= \begin{cases}
         5 \quad \mathrm{for} \ k=3 \\
         4 \quad \mathrm{for} \ k=4 \\
         3 \quad \mathrm{for} \ k\ge 5 \\
     \end{cases} \quad \mathrm{and} \quad p:= \nu (k-2)-2k+1.\] Then: \begin{enumerate}
     \item[(i)] the Castelnuovo--Mumford regularity of $\varphi_{\omega_C\otimes\eta}(C)$ is $\nu+1$;
     \item[(ii)] $\varphi_{\omega_C\otimes\eta}(C)$ satisfies $N_{\nu,p}$ but fails property $N_{\nu, p+1}$.
     \end{enumerate}
\end{corollary}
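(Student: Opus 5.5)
The corollary is a direct translation of Park's theorem \cite[Thm 1.3]{parkhyper} once the scroll type is known, so the plan is to feed Proposition \ref{prop scroll} into it. First, $\omega_C\otimes\eta$ has to be very ample. For $k\ge 3$ we have $\Cliff_\eta(C)=k-1\ge 2$ by Theorem \ref{contoiper}, so Corollary \ref{veryampleness} gives very ampleness, and $\varphi_{\omega_C\otimes\eta}(C)\subset\P^{g-2}$ is an embedded hyperelliptic curve of degree $2g-2$. Proposition \ref{prop scroll} then says that the rational normal scroll swept out by the lines spanned by the divisors of the $g^1_2$ is $S(g-1-k,k-2)$. Note that $k\ge 3$ forces the smaller index $e_2=k-2\ge 1$, so the scroll is smooth and not a cone; this is exactly the range in which \cite[Thm 1.3]{parkhyper} applies.

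Next I would recall Park's statement precisely. For a hyperelliptic curve embedded in a scroll $S(e_1,e_2)$ with $e_1\ge e_2\ge 1$, the regularity and the length of the linear strand $N_{\nu,p}$ depend only on $e_2$ (and, where relevant, on $\deg C$ and the genus). I would rewrite these invariants in our notation. The pair $(e_1,e_2)$ becomes $(g-1-k,k-2)$, the degree is $2g-2$, and the ambient dimension is $g-2=e_1+e_2+1$. Park's threshold for the regularity ($5,4,3$ according to whether the smaller index is $1$, $2$, or $\ge 3$) then becomes the stated $\nu$ for $k=3,4,\ge5$. Likewise, Park's formula for the largest $p$ with $N_{\nu,p}$ becomes $p=\nu(k-2)-2k+1$, which is $\nu e_2-2e_2-3$ expressed in terms of $k$.

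The main obstacle is this bookkeeping. Park's theorem may be phrased in terms of the curve's class on the scroll, namely $C\sim 2H+bF$ on $S(e_1,e_2)$, rather than in terms of $(e_1,e_2)$ alone. So I would compute $b$ from $\deg C=H\cdot C=2(e_1+e_2)+b=2g-2$, which gives $b=4$. I would also check that the numerical hypotheses of Park's theorem hold in the range $3\le k\le\lfloor\frac{g+1}{2}\rfloor$, which guarantees $e_1\ge e_2$. After that, the two conclusions (i) and (ii) follow by direct substitution, including the sharpness statement that $N_{\nu,p+1}$ fails.
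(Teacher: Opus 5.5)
Your strategy is the paper's: get very ampleness for $k\ge 3$ (your argument via Theorem \ref{contoiper} and Corollary \ref{veryampleness} is fine), use Proposition \ref{prop scroll}, and apply \cite[Thm 1.3]{parkhyper}. The gap is in the only substantive step, which is matching your data to the input of Park's theorem.

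Park's theorem is phrased in terms of the \emph{factorization type} $(m,b)$ of the very ample line bundle $L$. Here $L\simeq M^{\otimes m}\otimes\O_C(B)$, where $M$ is the $g^1_2$, $m$ is the largest $t$ with $L\otimes M^{-t}$ effective, and $b=\deg B$. The integers governing the syzygies are $\nu=\lceil\frac{b-1}{m+b-g-1}\rceil$ and $p=\nu(m+b-g-1)-b+1$.

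The coefficient you compute from $C\sim 2H+bF$ on the scroll is $\deg C-2(e_1+e_2)=4$, which does not depend on $k$. Park's $b$ is instead $\deg C-2e_1=2k$. If you insert $b=4$ into Park's formulas, the denominator becomes $m+4-g-1=2-k<0$.

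Your ``Park thresholds'' ($5,4,3$ for $e_2=1,2,\ge 3$, and $p=\nu e_2-2e_2-3$) are not derived from Park's statement. They are the desired conclusion rewritten with $e_2=k-2$. They become a function of $e_2$ alone only because $\deg L=2g-2$ forces $b=2e_2+4$, and this is exactly the computation you never carry out. As written, the argument presupposes what it must prove.

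The missing step is short. Since $\omega_C\simeq M^{\otimes(g-1)}$ and $2w_j\sim M$, you have $\eta\simeq M^{\otimes(-k)}(w_1+\cdots+w_{2k})$, hence
\[
\omega_C\otimes\eta\simeq M^{\otimes(g-1-k)}\otimes\O_C(w_1+\cdots+w_{2k}).
\]
Here $m=g-1-k$ is maximal: it is the $e_1$ of Proposition \ref{prop scroll}, or equivalently $h^0(\O_C(w_1+\cdots+w_{2k})\otimes M^{\vee})=0$. So $(m,b)=(g-1-k,2k)$, which gives $m+b-g-1=k-2$ and $b-1=2k-1$. Therefore $\nu=\lceil\frac{2k-1}{k-2}\rceil=2+\lceil\frac{3}{k-2}\rceil$, which equals $5,4,3$ for $k=3$, $k=4$, $k\ge 5$, and $p=\nu(k-2)-2k+1$. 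Statements (i) and (ii) are then exactly Park's conclusions.
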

\begin{proof}
    We assume that $\omega_C \otimes \eta$ is very ample, or equivalently, that $k \ge3$. By Proposition \ref{prop scroll}, the Prym-canonical curve $\varphi_{\omega_C\otimes\eta}(C)$ is contained in the rational normal scroll $S=(g-1-k, k-2)$. Following Park's terminology, this is equivalent to saying that the factorization type of $\omega_C \otimes \eta$ is $(m,b)=(g-k-1,2k)$, that is, $m=g-k-1$ is the biggest integer $t$ such that $\omega_C\otimes \eta \otimes M^{-t}$ is effective.Park associates to the factorization type $(m,b)$ two integers $\nu$ and $p$, which govern the syzygies of the embedded curve. In our situation a direct computation yields: 
\[ \begin{split}
\nu := \Big\lceil \frac{b-1}{m+b-g-1} \Big \rceil& = \begin{cases}
    5 \quad \mathrm{for} \quad k=3\\
    4 \quad \mathrm{for} \quad k=4\\
    3 \quad \mathrm{for} \quad k\ge5\\
\end{cases}
\\
p:=\nu(m+b-g-1)-b+1& = \nu(k-2)-2k+1.
\end{split}
\] 
The result then follows from \cite[Thm.1.3]{parkhyper}.
\end{proof}

\begin{remark}
    Applying \cite[Lem.4.2]{parkhyper}, all graded Betti numbers of $C\subset \P^{g-2}$ can be explicitly computed in terms of $m$ and $b$. 

\end{remark}

\subsection{The case of general curves}
Let $\mathcal{R}_g$ be the moduli space of Prym curves and consider a general element $(C, \eta) \in \mathcal{R}_g$. In this section we compute $\Cliff_{\eta}(C)$. Firstly, we show that the function $
 \Cliff_{\eta}(C)
$ is lower semicontinuous in families of Prym curves.  Then, the computation of the Prym-canonical Clifford index for a general curve follows directly from the case of hyperelliptic curves.

Given a family $p: \mathcal{C}\rightarrow B$ of smooth curves of genus $g>1$ over a $1$-dimensional base $B$, we consider the \textit{relative symmetric product} \[
\mathcal{C}_d:=\mathrm{Hilb}^d_{\mathcal{C}/B}\longrightarrow B, \] and the \textit{relative Picard variety} \[
\textbf{Pic}^0(p)\stackrel{q}{\longrightarrow} B.
\]
By definition, to give a family of Prym curves, one should also assign a line bundle $\eta$ over $\mathcal C$ defining a section of $q$ and an isomorphism $\beta: \eta^{\otimes 2}\to \mathcal O_{\mathcal C}$.

\begin{proposition} \label{lowcont}
The function \[
(C, \eta) \rightarrow \Cliff_{\eta}(C)
\] is lower semicontinuous in families of Prym curves.
\end{proposition}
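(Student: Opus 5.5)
Lower semicontinuity here means: for every integer $c$, the locus $\{b\in B \mid \Cliff_{\eta_b}(C_b)\le c\}$ is closed in $B$. Equivalently, if a general fibre has $\Cliff_{\eta}\le c$, so does every special fibre. I will describe this locus as the image of a proper morphism.

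By the second description in \eqref{cliffeta}, $\Cliff_{\eta_b}(C_b)\le c$ if and only if there exist $d\le g-1$ and a line bundle $L$ of degree $d$ with $h^0(L)+h^0(L\otimes\eta_b)\ge d+1-c$ and both $h^0(L)\ge1$, $h^0(L\otimes\eta_b)\ge1$. Fix $d$ and pairs $(a,a')$ with $a,a'\ge1$ and $a+a'=d+1-c$; there are finitely many choices. For each choice, consider the relative Picard scheme $\mathbf{Pic}^d(p)\to B$. Inside it, take the locus
\[
\mathcal W_{a,a'}:=\{(b,L)\;:\; h^0(C_b,L)\ge a,\ h^0(C_b,L\otimes\eta_b)\ge a'\}.
\]
The first condition cuts out the relative Brill--Noether locus $\mathcal W^{a-1}_d$, which is closed by upper semicontinuity of $h^0$; equivalently, it is a degeneracy locus of a map of vector bundles obtained from a Poincaré bundle after twisting by a relatively ample divisor. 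The second condition is the preimage of $\mathcal W^{a'-1}_d$ under the automorphism $L\mapsto L\otimes\eta$ of $\mathbf{Pic}^d(p)$ over $B$. Here we use that $\eta$ is a line bundle on $\mathcal C$, so tensoring by it is an isomorphism of the relative Picard scheme. If no Poincaré bundle exists, I would work étale-locally on $B$, or pass to the relative symmetric product $\mathcal C_d$ and use the Abel--Jacobi map. Both loci are closed, so $\mathcal W_{a,a'}$ is closed.

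Next, $\mathbf{Pic}^d(p)\to B$ is proper, since it is a torsor under the abelian scheme $\mathbf{Pic}^0(p)$. Hence the image of $\mathcal W_{a,a'}$ in $B$ is closed. The locus $\{\Cliff_\eta\le c\}$ is the finite union over $d$ and $(a,a')$ of these images, so it is closed. This gives lower semicontinuity.

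The main obstacle is the technical point of representing upper semicontinuity of $h^0(L)$ and of $h^0(L\otimes\eta)$ simultaneously on a single proper parameter space, given possible nonexistence of a Poincaré bundle. The cleanest fix is to use $\mathcal C_d$: for $h^0(L)\ge a$ with $a\ge1$, parametrize instead effective divisors $D\in\mathcal C_d$. The set of $(b,D)$ with $h^0(\O(D))\ge a$ and $h^0(\O(D)\otimes\eta_b)\ge a'$ is closed by semicontinuity applied to the flat families $\O_{\mathcal C\times_B\mathcal C_d}(\mathcal D)$ and $\O_{\mathcal C\times_B\mathcal C_d}(\mathcal D)\otimes\eta$ over $\mathcal C_d$. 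Moreover, $\mathcal C_d\to B$ is projective, hence proper, so the image argument goes through directly.
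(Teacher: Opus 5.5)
Your argument is correct. It uses the same two ingredients as the paper's proof, properness of the relative symmetric product and upper semicontinuity of $h^0$, but packages them differently.

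The paper works along the one-parameter family. It takes a line bundle $L_b=\O_{C_b}(D_b)$ computing $\Cliff_{\eta_b}(C_b)$ on a general fibre and writes $\eta_b=\O_{C_b}(D_b-E_b)$ with $E_b$ effective. Via the relative difference map $\mathcal C_d\times_B\mathcal C_d\to\mathbf{Pic}^0(p)$, it extends $D_b,E_b$ to relative effective divisors $D,E$ with $\eta=\O_{\mathcal C}(D-E)$. It then specializes, obtaining the explicit bundle $L_0=\O_{C_0}(D_0)$ with $\Cliff_{\eta_0}(L_0)\le\Cliff_{\eta_b}(L_b)$ by semicontinuity for $\O(D)$ and $\O(E)$.

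You instead prove directly that each sublevel set $\{b:\Cliff_{\eta_b}(C_b)\le c\}$ is closed. You exhibit it as a finite union of images, under the proper map $\mathcal C_d\to B$, of loci cut out by semicontinuity.

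What each route buys:
\begin{itemize}
\item Your version works over an arbitrary base. It also makes rigorous a step the paper states quickly: that $(D_b,E_b)$, chosen on a single general fibre, extends over $B$. That step really requires choosing such pairs in a family over a dense open set, possibly after a finite base change, and then taking closures in a proper scheme. Your proper-image argument encodes exactly this.
\item The paper's version is shorter and produces a concrete contributing line bundle on the special fibre.
\end{itemize}

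One bookkeeping fix is needed. The pairs must satisfy $a,a'\ge 1$ and $a+a'\ge d+1-c$, equivalently $a+a'=\max\{d+1-c,2\}$. With $a+a'=d+1-c$ there are no admissible pairs when $d\le c$, so you would lose all contributions in those degrees. For example, in genus $2$ with $c=1$ your union would be empty, whereas the locus is all of $B$.
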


\begin{proof}
We denote by $(C_b, \eta_b)$ a general fiber of a smooth family of Prym curves $(p \colon \mathcal{C} \rightarrow B,\eta,\beta)$ as above, and by $(C_0,\eta_0)$ the special fiber. Let $L_b\in \Pic(C_b)$ be a line bundle on $C_b$ such that $\Cliff_{\eta_b}(C_b)=\Cliff_{\eta_b}(L_b)$. Write $L_b=\O_{C_b}(D_b)$ for some effective divisor $D_b$ of degree $d$ on $C_b$, so that  $\eta_b= \O_{C_b}(D_b-E_b)$ for some other degree $d$ effective divisor $E_b$ on $C_b$; in particular, $\eta_b$ lies in the image of the difference map $\phi_b \colon (C_{b})_d \times (C_{b})_d \rightarrow \Pic^0(C_b)$. We have the following diagram: \begin{equation} \label{diagram}
\begin{tikzcd}
\mathcal{C}_d \times \mathcal{C}_d \arrow[r, "\phi"] \arrow[rd, "\pi_d"] & \mathbf{Pic}^0(p) \arrow[d, "q"] \\
                                                                         & B                                 
\end{tikzcd}
\end{equation} where $\phi$ is the relative difference map and $\pi_d$ is induced by $p$. By the smoothness of the family $p$, the divisors $D_b, E_b$ can be extended to relative effective divisors $D,E\subset \mathcal C$ over $B$ such that $\eta=\mathcal O_{\mathcal C}(D-E)$. Over the special point $0 \in B$ we thus get $\eta_0= \O_{C_0}(D_0-E_0) \in \Pic^0(C_0)$, that is, $\eta_0 \in \mathrm{Im}{\phi_0}$. Setting $L_0:=\O_{C_0}(D_0)$ and $L_0 \otimes \eta_0:=\O_{C_0}(E_0)$, we obtain \[
\begin{split}
\Cliff_{\eta_0}(L_0) & = \deg(L_0)-h^0(L_0)-h^0(L_0 \otimes \eta_0)+1 \le \\
& \le \deg(L_b)-h^0(L_b)-h^0(L_b \otimes \eta_b)+1= \Cliff_{\eta_b}(L_b)
\end{split}
\] because $h^0(L_0)\ge h^0(L_b)$ and $h^0(L_0 \otimes \eta_0) \ge h^0(L_b \otimes \eta_b)$ by upper semicontinuity. This concludes the proof.
\end{proof}

As a corollary, we provide the following:
\begin{proof}[Proof of Theorem \ref{Cgen}]
Let $(p \colon \mathcal{C} \rightarrow B,\eta,\beta)$ be a smooth family of Prym curves such that the central fiber $C_0=p^{-1}(0)$ is hyperelliptic and the line bundle $\eta_0$ can be written as in \eqref{eta} with $k=\lfloor \frac{g+1}{2} \rfloor$. Then, Theorem \ref{contoiper} and Proposition \ref{lowcont} imply that \[
\lfloor \frac{g-1}{2} \rfloor = \Cliff_{\eta_0}(C_0) \le \Cliff_{\eta_b}(C_b).
\] Conversely, Remark \ref{surj_diff_map} yields \[
\Cliff_{\eta_b}(C_b) \le \lfloor \frac{g-1}{2} \rfloor,
\] and thus equality holds. Again by Remark \ref{surj_diff_map} we conclude that $\dim\Cliff_{\eta_b}(C_b)=(0,0)$
\end{proof}

\bibliographystyle{alpha}
\bibliography{Biblio}

@book {acgh,
    AUTHOR = {Arbarello, E. and Cornalba, M. and Griffiths, P. A. and
              Harris, J.},
     TITLE = {Geometry of algebraic curves. {V}ol. {I}},
    SERIES = {Grundlehren der mathematischen Wissenschaften [Fundamental
              Principles of Mathematical Sciences]},
    VOLUME = {267},
 PUBLISHER = {Springer-Verlag, New York},
      YEAR = {1985},
     PAGES = {xvi+386},
      ISBN = {0-387-90997-4},
   MRCLASS = {14Hxx (14-02)},
  MRNUMBER = {770932},
MRREVIEWER = {Werner\ Kleinert},
       DOI = {10.1007/978-1-4757-5323-3},
       URL = {https://doi.org/10.1007/978-1-4757-5323-3},
}

@article {cdgk,
    AUTHOR = {Ciliberto, Ciro and Dedieu, Thomas and Galati, Concettina and
              Knutsen, Andreas Leopold},
     TITLE = {On the locus of {P}rym curves where the {P}rym-canonical map
              is not an embedding},
   JOURNAL = {Ark. Mat.},
  FJOURNAL = {Arkiv f\"or Matematik},
    VOLUME = {58},
      YEAR = {2020},
    NUMBER = {1},
     PAGES = {71--85},
      ISSN = {0004-2080,1871-2487},
   MRCLASS = {14H40 (14H10)},
  MRNUMBER = {4094639},
MRREVIEWER = {Pawe\l\ Bor\'owka},
       DOI = {10.4310/arkiv.2020.v58.n1.a5},
       URL = {https://doi.org/10.4310/arkiv.2020.v58.n1.a5},
}

@book{hartshorne,
  author    = {Robin Hartshorne},
  title     = {Algebraic Geometry},
  series    = {Graduate Texts in Mathematics},
  volume    = {52},
  publisher = {Springer-Verlag},
  address   = {New York, NY},
  year      = {1977},
  isbn      = {978-0-387-90244-9},
  doi       = {10.1007/978-1-4757-3849-0}
}

@incollection {verra,
    AUTHOR = {Verra, Alessandro},
     TITLE = {Rational parametrizations of moduli spaces of curves},
 BOOKTITLE = {Handbook of moduli. {V}ol. {III}},
    SERIES = {Adv. Lect. Math. (ALM)},
    VOLUME = {26},
     PAGES = {431--506},
 PUBLISHER = {Int. Press, Somerville, MA},
      YEAR = {2013},
      ISBN = {978-1-57146-259-6},
   MRCLASS = {14H10 (14E08 14M20)},
  MRNUMBER = {3135442},
MRREVIEWER = {Gianfranco\ Casnati},
}

@article {elms,
    AUTHOR = {Eisenbud, David and Lange, Herbert and Martens, Gerriet and
              Schreyer, Frank-Olaf},
     TITLE = {The {C}lifford dimension of a projective curve},
   JOURNAL = {Compositio Math.},
  FJOURNAL = {Compositio Mathematica},
    VOLUME = {72},
      YEAR = {1989},
    NUMBER = {2},
     PAGES = {173--204},
      ISSN = {0010-437X,1570-5846},
   MRCLASS = {14H45 (14C20 14J28)},
  MRNUMBER = {1030141},
MRREVIEWER = {Olivier\ Debarre},
       URL = {http://www.numdam.org/item?id=CM_1989__72_2_173_0},
}

@article {cm,
    AUTHOR = {Coppens, Marc and Martens, Gerriet},
     TITLE = {Secant spaces and {C}lifford's theorem},
   JOURNAL = {Compositio Math.},
  FJOURNAL = {Compositio Mathematica},
    VOLUME = {78},
      YEAR = {1991},
    NUMBER = {2},
     PAGES = {193--212},
      ISSN = {0010-437X,1570-5846},
   MRCLASS = {14H10 (14C20 14H40 14H45)},
  MRNUMBER = {1104787},
MRREVIEWER = {Montserrat\ Teixidor i Bigas},
       URL = {http://www.numdam.org/item?id=CM_1991__78_2_193_0},
}

@article {lm,
    AUTHOR = {Lange, H. and Martens, G.},
     TITLE = {On the gonality sequence of an algebraic curve},
   JOURNAL = {Manuscripta Math.},
  FJOURNAL = {Manuscripta Mathematica},
    VOLUME = {137},
      YEAR = {2012},
    NUMBER = {3-4},
     PAGES = {457--473},
      ISSN = {0025-2611,1432-1785},
   MRCLASS = {14H51},
  MRNUMBER = {2875287},
MRREVIEWER = {Jos\'e\ Ignacio\ Farr\'an},
       DOI = {10.1007/s00229-011-0475-4},
       URL = {https://doi.org/10.1007/s00229-011-0475-4},
}

@article {ms,
    AUTHOR = {Martens, G. and Schreyer, F.-O.},
     TITLE = {Line bundles and syzygies of trigonal curves},
   JOURNAL = {Abh. Math. Sem. Univ. Hamburg},
  FJOURNAL = {Abhandlungen aus dem Mathematischen Seminar der Universit\"at
              Hamburg},
    VOLUME = {56},
      YEAR = {1986},
     PAGES = {169--189},
      ISSN = {0025-5858,1865-8784},
   MRCLASS = {14H45 (14H50)},
  MRNUMBER = {882414},
MRREVIEWER = {Rafael\ Hernandez},
       DOI = {10.1007/BF02941515},
       URL = {https://doi.org/10.1007/BF02941515},
}

@article {cm00,
    AUTHOR = {Coppens, Marc and Martens, Gerriet},
     TITLE = {Linear series on 4-gonal curves},
   JOURNAL = {Math. Nachr.},
  FJOURNAL = {Mathematische Nachrichten},
    VOLUME = {213},
      YEAR = {2000},
     PAGES = {35--55},
      ISSN = {0025-584X,1522-2616},
   MRCLASS = {14H51 (14H45)},
  MRNUMBER = {1755245},
MRREVIEWER = {James\ N.\ Brawner},
       DOI =
              {10.1002/(SICI)1522-2616(200005)213:1<35::AID-MANA35>3.3.CO;2-Q},
       URL =
              {https://doi.org/10.1002/(SICI)1522-2616(200005)213:1<35::AID-MANA35>3.3.CO;2-Q},
}

@article {park,
    AUTHOR = {Park, S.-S.},
     TITLE = {On the variety of special linear series on a general 5-gonal
              curve},
   JOURNAL = {Abh. Math. Sem. Univ. Hamburg},
  FJOURNAL = {Abhandlungen aus dem Mathematischen Seminar der Universit\"at
              Hamburg},
    VOLUME = {72},
      YEAR = {2002},
     PAGES = {283--291},
      ISSN = {0025-5858,1865-8784},
   MRCLASS = {14H51},
  MRNUMBER = {1941560},
MRREVIEWER = {Marina\ Zompatori},
       DOI = {10.1007/BF02941678},
       URL = {https://doi.org/10.1007/BF02941678},
}

@article {ckm,
    AUTHOR = {Coppens, Marc and Keem, Changho and Martens, Gerriet},
     TITLE = {Primitive linear series on curves},
   JOURNAL = {Manuscripta Math.},
  FJOURNAL = {Manuscripta Mathematica},
    VOLUME = {77},
      YEAR = {1992},
    NUMBER = {2-3},
     PAGES = {237--264},
      ISSN = {0025-2611,1432-1785},
   MRCLASS = {14H40 (14C20 14H45)},
  MRNUMBER = {1188583},
MRREVIEWER = {Montserrat\ Teixidor i Bigas},
       DOI = {10.1007/BF02567056},
       URL = {https://doi.org/10.1007/BF02567056},
}

@article{bdc,
  AUTHOR  = {Fabio Bardelli and Andrea Del Centina},
  TITLE   = {Osservazioni sullo spazio dei moduli delle curve trigonali},
  JOURNAL = {Atti della Accademia Nazionale dei Lincei. Classe di Scienze Fisiche, Matematiche e Naturali. Rendiconti, Serie 8},
  VOLUME  = {70},
  NUMBER  = {2},
  PAGES   = {96--100},
  YEAR    = {1981}
}

@article {parkhyper,
    AUTHOR = {Park, Euisung},
     TITLE = {Higher syzygies of hyperelliptic curves},
   JOURNAL = {J. Pure Appl. Algebra},
  FJOURNAL = {Journal of Pure and Applied Algebra},
    VOLUME = {214},
      YEAR = {2010},
    NUMBER = {2},
     PAGES = {101--111},
      ISSN = {0022-4049,1873-1376},
   MRCLASS = {14H51 (13D02)},
  MRNUMBER = {2559684},
MRREVIEWER = {Peter\ John\ Vermeire},
       DOI = {10.1016/j.jpaa.2009.04.006},
       URL = {https://doi.org/10.1016/j.jpaa.2009.04.006},
}

@article {greenlazarsfeld,
    AUTHOR = {Green, Mark and Lazarsfeld, Robert},
     TITLE = {On the projective normality of complete linear series on an
              algebraic curve},
   JOURNAL = {Invent. Math.},
  FJOURNAL = {Inventiones Mathematicae},
    VOLUME = {83},
      YEAR = {1986},
    NUMBER = {1},
     PAGES = {73--90},
      ISSN = {0020-9910,1432-1297},
   MRCLASS = {14H10 (14H45)},
  MRNUMBER = {813583},
MRREVIEWER = {H.\ Lange},
       DOI = {10.1007/BF01388754},
       URL = {https://doi.org/10.1007/BF01388754},
}

@article{schreyer,
AUTHOR = {Schreyer, F.-O.},
JOURNAL = {Mathematische Annalen},
PAGES = {105-138},
TITLE = {Syzygies of Canonical Curves and Special Linear Series.},
URL = {http://eudml.org/doc/164137},
VOLUME = {275},
YEAR = {1986},
}

@article{SP01,
  AUTHOR  = {Park, Seong-Suk},
  TITLE   = {Existence of base-point-free pencils of degree {$g-1$} on bi-elliptic curves},
  JOURNAL = {Osaka Journal of Mathematics},
  VOLUME  = {40},
  NUMBER  = {1},
  YEAR    = {2003},
  PAGES   = {279--285}
}

@misc{farkaslc,
      TITLE={Secant loci on moduli of Prym varieties}, 
      AUTHOR={Gavril Farkas and Margherita Lelli-Chiesa},
      YEAR={2025},
      EPRINT={2509.26118},
      ARCHIVEPREFIX={arXiv},
      PRIMARYCLASS={math.AG},
      URL={https://arxiv.org/abs/2509.26118}, 
}

@article {green,
    AUTHOR = {Green, Mark L.},
     TITLE = {Koszul cohomology and the geometry of projective varieties},
      NOTE = {With an appendix by Robert Lazarsfeld and Green},
   JOURNAL = {J. Differential Geom.},
  FJOURNAL = {Journal of Differential Geometry},
    VOLUME = {19},
      YEAR = {1984},
    NUMBER = {1},
     PAGES = {125--171},
      ISSN = {0022-040X,1945-743X},
   MRCLASS = {14F05 (14B12)},
  MRNUMBER = {739785},
MRREVIEWER = {G.\ Horrocks},
       URL = {http://projecteuclid.org/euclid.jdg/1214438426},
}

@article {langesernesi,
    AUTHOR = {Lange, H. and Sernesi, E.},
     TITLE = {Quadrics containing a {P}rym-canonical curve},
   JOURNAL = {J. Algebraic Geom.},
  FJOURNAL = {Journal of Algebraic Geometry},
    VOLUME = {5},
      YEAR = {1996},
    NUMBER = {2},
     PAGES = {387--399},
      ISSN = {1056-3911,1534-7486},
   MRCLASS = {14H40 (14C34 14K05)},
  MRNUMBER = {1374713},
MRREVIEWER = {Montserrat\ Teixidor i Bigas},
}

@article{voisinodd,
  AUTHOR  = {Voisin, Claire},
  TITLE   = {Green's canonical syzygy conjecture for generic curves of odd genus},
  JOURNAL = {Compositio Mathematica},
  VOLUME  = {141},
  NUMBER  = {5},
  PAGES   = {1163--1190},
  YEAR    = {2005},
  DOI     = {10.1112/S0010437X05001387},
}

@article{voisineven,
  AUTHOR  = {Voisin, Claire},
  TITLE   = {Green's generic syzygy conjecture for curves of even genus lying on a $K3$ surface},
  JOURNAL = {Journal of the European Mathematical Society},
  VOLUME  = {4},
  NUMBER  = {4},
  PAGES   = {363--404},
  YEAR    = {2002},
  DOI     = {10.1007/s100970200042},
}

@article {papadima,
    AUTHOR = {Aprodu, Marian and Farkas, Gavril and Papadima, \c Stefan and
              Raicu, Claudiu and Weyman, Jerzy},
     TITLE = {Koszul modules and {G}reen's conjecture},
   JOURNAL = {Invent. Math.},
  FJOURNAL = {Inventiones Mathematicae},
    VOLUME = {218},
      YEAR = {2019},
    NUMBER = {3},
     PAGES = {657--720},
      ISSN = {0020-9910,1432-1297},
   MRCLASS = {14H51 (13D02 14M15 14N05 20G05)},
  MRNUMBER = {4022070},
MRREVIEWER = {Yeongrak\ Kim},
       DOI = {10.1007/s00222-019-00894-1},
       URL = {https://doi.org/10.1007/s00222-019-00894-1},
}

@article{kemeny,
  AUTHOR  = {Michael Kemeny},
  TITLE   = {A proof of generic Green's conjecture in odd genus },
  JOURNAL = {Épijournal de Géométrie Algébrique (EPIGA)},
  VOLUME = {9},
  YEAR    = {2025},
  NUMBER  = {27}
}

\end{document}